\documentclass[11pt,reqno]{amsart}
%%% PACKAGES %%%%
\usepackage[T1]{fontenc}
\usepackage{graphicx}
\usepackage{cite}
\usepackage{empheq}
\usepackage{hyphenat}

%%% FOR READING ON SCREEN %%% 
%%% HFBRIGHT FONTS NEEDED FOR NICE RENDERING %%%
%\usepackage{type1ec} %Make Type1 fonts scalable
%\usepackage{cmbright}
\usepackage{color}
\definecolor{MyLinkColor}{rgb}{0,0,0.4}
\numberwithin{equation}{section}

%%% COMMANDS %%%

\newcommand{\re}{\mathop{\rm Re}\nolimits}

\newcommand{\PV}{\mathop{\rm PV}\nolimits}

\newcommand{\e}{\varepsilon}

\newcommand{\p}{\partial}
\newcommand{\wt}{\widetilde}
\newcommand{\ov}{\overline}

\newcommand{\oo}{\ov\omega}

\newcommand{\bA}{\mathbb{A}}

\newcommand{\bB}{\mathbb{B}}

\newcommand{\kH}{\mathcal{H}}

\newcommand{\kL}{\mathcal{L}}

\newcommand{\R}{\mathbb{R}}

\newcommand{\N}{\mathbb{N}}

\DeclareMathOperator{\supp}{supp}
 
\newtheorem{thm}{Theorem}[section]
\newtheorem{prop}[thm]{Proposition}
\newtheorem{lemma}[thm]{Lemma}
\newtheorem{cor}[thm]{Corollary}

\textwidth16.5cm
\oddsidemargin0.025cm
\evensidemargin0.025cm
\textheight21cm

\numberwithin{equation}{section}

\title[The Muskat problem with surface tension]{A new reformulation of the Muskat problem with surface tension}
\author{Anca--Voichita Matioc}
\address{Fakult\"at f\"ur Mathematik, Universit\"at Regensburg,   93040 Regensburg, Deutschland.}
\email{anca.matioc@ur.de}
\email{bogdan.matioc@ur.de}

\author{Bogdan--Vasile Matioc}

%%% NEED TO ADD MSC %%%
\subjclass[2020]{35R37; 35K59; 35K93;  35Q35; 42B20}
\keywords{Muskat problem; Surface tension; Singular integral operator; Well-posedness}

%%% HYPERREF (MUST END PREAMBLE) %%% we pointMath. (2)}, 173(1):477-542, 2011
%\usepackage[colorlinks=true]{hyperref} %pdftex
\usepackage[colorlinks=true,linkcolor=MyLinkColor,citecolor=MyLinkColor]{hyperref} %dvips

\begin{document}
  
\begin{abstract} 
Two formulas that connect the derivatives of the double layer potential and of a related singular integral operator evaluated at some density $\vartheta$ to the
$L_2$-adjoints of these operators evaluated at the density $\vartheta'$ are used to recast 
  the Muskat problem with surface tension and general viscosities as a system of equations with nonlinearities expressed in terms 
of the $L_2$-adjoints of these operators.  
An advantage of this formulation is that the nonlinearities appear now as a derivative.
This aspect and abstract quasilinear parabolic theory are then exploited to establish a local well-posedness result in all subcritical Sobolev spaces $W^s_p(\R)$ with $p\in(1,\infty)$ and $s\in (1+1/p,2)$.
\end{abstract}

\maketitle

%%%%%%%%%%%%%%%%%%%%%%%%%%%%%%%%%%%%%%%%%%%%%%%%%%
%%%%%%%%%%%%%%%%%%%%%%%%%%%%%%%%%%%%%%%%%%%%%%%%%%
%%%%%%%%%%%%%%%%%%%%%%%%%%%%%%%%%%%%%%%%%%%%%%%%%%
%%%%%%%%%%%%%%%%%%%%%%%%%%%%%%%%%%%%%%%%%%%%%%%%%%
\section{Introduction}\label {Sec:1}
%%%%%%%%%%%%%%%%%%%%%%%%%%%%%%%%%%%%%%%%%%%%%%%%%%
%%%%%%%%%%%%%%%%%%%%%%%%%%%%%%%%%%%%%%%%%%%%%%%%%%
%%%%%%%%%%%%%%%%%%%%%%%%%%%%%%%%%%%%%%%%%%%%%%%%%%
%%%%%%%%%%%%%%%%%%%%%%%%%%%%%%%%%%%%%%%%%%%%%%%%%%

In this paper we consider the two-dimensional Muskat problem describing the dynamics in an unbounded two fluids system which moves with constant speed $V$ 
in a  horizontal/vertical Hele-Shaw cell or in a porous medium. 
The fluids are assumed to  fill the entire plane and
the free interface between the fluids is parameterized as the graph 
 \[
 \{(x,f(t,x)+tV)\,:\, x\in\R\}\qquad\text{ for $t\geq 0$.}
 \]
We   take into account both gravity and surface tension effects. Let therefore $\kappa(f(t))$ denote the curvature of the free interface and let~$\sigma>0$ be the surface tension coefficient.
The subscript~$-/+$ is used to denote the fluid located below/above the interface,  $g\geq0 $ is the Earth's gravity, and~${k}$ is the permeability of the homogeneous porous medium.
Moreover, the positive constants~$\mu_\pm$ and~$\rho_\pm$ are the viscosity and the density  of the fluids.
Introducing a further unknown~$\oo:=\oo(t,x)$, with~$2(1+(\p_xf)^2)^{-1/2}\oo$ measuring the jump of the velocity field in tangential direction at the interface,
the Muskat problem can be expressed in a compact form as the following coupled system
\begin{equation}\label{P:1}
\left.
\begin{array}{rlll}
\cfrac{df}{dt}(t)\!\!&=&\!\! \bB(f(t))[\oo(t)],\quad \text{$t>0$},\\[2ex]
(1-a_\mu\bA(f(t)))[\oo(t)]\!\!&=&\!\!b_\mu \big(\sigma\kappa(f(t))-\Theta f(t)\big)',\quad \text{$t>0$},\\[2ex]
f(0)\!\!&=&\!\! f_0,
\end{array}
\right\}
\end{equation} 
 cf., e.g., \cite{MBV18, AM22, Ngu20, A14}. 
 The constants in \eqref{P:1} are given by the relations
\[
b_{\mu}:=\frac{k}{\mu_-+\mu_+}>0,\qquad  a_\mu:=\frac{\mu_--\mu_+}{\mu_-+\mu_+}\in(-1,1),\qquad  \Theta:=g(\rho_--\rho_+)  +\frac{\mu_--\mu_+}{k}V\in\R.
\]
 Throughout the paper $(\,\cdot\,)'$ denotes differentiation with respect to the spatial variable  $x.$ 
 Furthermore, given a Lipschitz continuous map~$f:\R\to\R$, the singular integral operators $\bA(f)$ and~$\bB(f)$ in~\eqref{P:1} are given by
 \begin{equation}\label{OpAB}
 \begin{aligned}
 \bA(f)[\oo](x)&:=\frac{1}{\pi}\PV\int_\R\frac{f'(x)-(\delta_{[x,y]}f)/y}{1+\big[(\delta_{[x,y]}f)/y\big]^2}\frac{\oo(x-y)}{y}\, dy,\\[1ex] 
 \bB(f)[\oo](x)&:=\frac{1}{\pi}\PV\int_\R\frac{1+f'(x)(\delta_{[x,y]}f)/y}{1+\big[(\delta_{[x,s]}f)/y\big]^2}\frac{\oo(x-y)}{y}\, dy
 \end{aligned}
 \end{equation}
 for $\oo\in L_2(\R)$. 
We use $\PV$ to denote the principal value  and the shorthand notation
 \[
 \delta_{[x,y]}f:=f(x)-f(x-y),\qquad x,\, y\in\R.
 \]
These operators are bounded, that is $\bA(f),\, \bB(f)\in\kL(L_p(\R))$ for all $p\in(1,\infty),$ see Lemma \ref{L:MP0}~(i) below. 
Let $\bA(f)^*,\, \bB(f)^*\in\kL(L_2(\R))$ denote their $L_2$-adjoint operators. 
We point out that the operator~$\bA(f)^*$ is the double layer potential for the Laplace operator corresponding to the unbounded hypersurface~${\{y=f(x)\}\subset\R^2.}$

A key point in our analysis are the following identities
\begin{equation}\label{comder}
(\bA(f)^*[\vartheta])'=-\bA(f)[\vartheta'] \qquad\text{and}\qquad (\bB(f)^*[\vartheta])'=-\bB(f)[\vartheta'],
\end{equation} 
which are satisfied provided that $f\in W^{2}_p(\R)$  and $\vartheta\in W^1_p(\R),$  see Proposition~\ref{Prop:1} below.
Using~\eqref{comder}, we show  in Section~\ref{Sec:3} that \eqref{P:1} can be formulated as an evolution problem for the unknown~$(f,\vartheta)$ which reads as 
\begin{equation}\label{P:2}
\left.
\begin{array}{rlll}
\cfrac{df}{dt}(t)\!\!&=&\!\! -(\bB(f(t))^*[\vartheta(t)])',\quad \text{$t>0$},\\[2ex]
(1+a_\mu\bA(f(t))^*)[\vartheta(t)]\!\!&=&\!\!b_\mu \big(\sigma\kappa(f(t))-\Theta f(t)\big),\quad \text{$t>0$},\\[2ex]
f(0)\!\!&=&\!\! f_0.
\end{array}
\right\}
\end{equation}
 The connection between $\oo(t)$ and $\vartheta(t)$ is through the relation
\[
\oo(t)=(\vartheta(t))', \qquad t>0.
\]
We shall take advantage of the new formulation~\eqref{P:2} to establish the well-posedness of the Muskat problem with surface tension in all subcritical
  Sobolev spaces $W^{s}_p(\R),$  $ s\in(1+1/p,2)$ and~${p\in(1,\infty).}$
Compared to  \eqref{P:1}, the formulation  \eqref{P:2} has  several advantages. 
On the one hand, it enables us   to  consider an  equation related to  $\eqref{P:2}_2$  in the Sobolev space $W^{\ov s-1}_p(\R),$  $\ov s\in(1+1/p,s)$, see equation~\eqref{theta},  
whereas in the context of~$\eqref{P:1}_2$  one more derivative appears on the right of this equation (and the right hand side is in the latter case a distribution). 
On the other hand,  the equation~$\eqref{P:2}_1$ will be considered in a  Sobolev space with a negative index, more precisely  in~${W^{\ov s-2}_p(\R),}$ 
but its  right side is the derivative of function that lies in $W^{\ov s-1}_p(\R)$, and this  is very useful when establishing estimates.

Exploiting also the quasilinear character of the curvature operator $\kappa(f),$ the core of our analysis is to show  that \eqref{P:2} can be recast as a quasilinear parabolic evolution problem for $f$.
Based on these properties an on the abstract quasilinear  parabolic theory presented in \cite{Am93} (see also \cite{MW20}), 
 we  prove the following local well-posedness result.

\pagebreak

\begin{thm}\label{MT1}
Let  $p\in(1,\infty)$, $1+1/p<\ov s<s<2$, and chose (an arbitrary small) $\zeta\in(0,(s-\ov s)/3]$.
Then,  given $f_0\in W^s_p(\R)$, there exists a unique maximal solution $f:=f(\,\cdot\,; f_0)$ to \eqref{P:2} such that
\begin{equation*}  
 f\in {\rm C}([0,T^+),W^s_p(\mathbb{R}))\cap {\rm C}((0,T^+), W^{{\ov s}+1}_p(\mathbb{R}))\cap {\rm C}^1((0,T^+), W^{{\ov s}-2}_p(\mathbb{R})) 
  \end{equation*}
   and  
  \[
f\in   {\rm C}^{\zeta}([0,T^+), W^{\ov s}_p(\mathbb{R}),
  \]
  where $T^+=T^+(f_0)\in(0,\infty]$ is the maximal existence time.
Moreover,  $[(t,f_0)\mapsto f(t;f_0)]$ defines a semiflow on $W^s_p(\R)$ which is smooth in the open set
  \[
\{(t,f_0)\,:\, f_0\in W^s_p(\R),\, 0<t<T^+(f_0)\}\subset \R\times W^s_p(\R)  
  \]
  and
    \begin{equation}\label{eq:fg}
 f\in {\rm C}^\infty((0,T^+)\times\R,\R)\cap {\rm C}^\infty ((0, T^+),  W^k_p(\R))\quad \text{for all $k\in\N$}.
  \end{equation}
 \end{thm}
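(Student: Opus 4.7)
The plan is to use the second equation of \eqref{P:2} to eliminate $\vartheta$ and recast the system as a single quasilinear parabolic evolution equation $\dot f=\Phi(f)$ on the interpolation scale determined by the pair $(E_0,E_1):=(W^{\ov s-2}_p(\R),W^{\ov s+1}_p(\R))$, to which the abstract theorem of \cite{Am93} (in the form used in \cite{MW20}) then applies. The target function space in Theorem~\ref{MT1} is consistent with this pair because the distance between indices equals $3$ (the anticipated parabolic order) and both $W^s_p$ and $W^{\ov s}_p$ embed as intermediate spaces.

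\textbf{Elimination step.} First I would prove that, for every $f\in W^s_p(\R)$, the operator $1+a_\mu\bA(f)^*$ is an isomorphism of $W^{\ov s-1}_p(\R)$, using $|a_\mu|<1$ together with bounds on $\bA(f)^*$ obtained by duality from those for $\bA(f)$ together with the key identity \eqref{comder}. This lets me define
\[
\vartheta(f):=b_\mu(1+a_\mu\bA(f)^*)^{-1}\big[\sigma\kappa(f)-\Theta f\big]\in W^{\ov s-1}_p(\R),
\]
and verify that $f\mapsto\vartheta(f)$ is smooth from (an open subset of) $W^s_p(\R)$ to $W^{\ov s-1}_p(\R)$, exploiting the smoothness of $\kappa$ as a quasilinear map and of $(\bA(f)^*)$-dependence on $f$. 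Substituting into the first equation yields $\Phi(f):=-(\bB(f)^*[\vartheta(f)])'\in W^{\ov s-2}_p(\R)$, and the fact that $\Phi(f)$ appears as a derivative of an $W^{\ov s-1}_p$-function is precisely the feature advertised after \eqref{P:2} that makes the subsequent estimates close.

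\textbf{Quasilinear parabolic structure, the main obstacle.} The core step is to split
\[
\Phi(f)=-A(f)f+F(f),
\]
where the principal part $A(f)$ carries the three derivatives of $f$ coming from $\sigma\kappa(f)''$ (one from the outside derivative, two from $\kappa$), while $F(f)$ collects lower-order terms that are locally Lipschitz from $W^s_p$ into $W^{\ov s-2}_p$. The hard part is to prove that $-A(f_0):W^{\ov s+1}_p\to W^{\ov s-2}_p$ is the generator of an analytic semigroup on $W^{\ov s-2}_p$ for each admissible $f_0$. I expect to proceed by a localization/freezing-coefficients argument: after a partition of unity and a change of variables flattening $f_0$ locally, the frozen operator reduces to a Fourier multiplier essentially of the form $-\sigma b_\mu(1+(f_0')^2)^{-3/2}|D|^3$ plus lower-order and commutator errors that can be absorbed thanks to the derivative structure of $\Phi$ and to the smoothing properties of $\bB(\cdot)^*$ and $(1+a_\mu\bA(\cdot)^*)^{-1}$. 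A Mikhlin-type $W^r_p$-multiplier estimate together with a perturbation-of-generators argument then yields the required resolvent bounds in a sector of $\C$, and a continuity/commutator argument pieces the localized resolvents back together. Smoothness of $f\mapsto A(f)$ in $\kL(E_1,E_0)$ is a byproduct of the analysis in the elimination step.

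\textbf{Application of the abstract theorem and bootstrap.} Once $A\in C^\infty(W^s_p,\kH(E_1,E_0))$ and $F\in C^\infty(W^s_p,E_0)$ are in place, and $W^s_p$, $W^{\ov s}_p$ are continuously embedded interpolation spaces of the pair $(E_0,E_1)$ with the correct orders, the theory of \cite{Am93} provides a unique maximal solution with the stated regularity, the semiflow and its smoothness on the open set of existence. The H\"older regularity $f\in C^\zeta([0,T^+),W^{\ov s}_p)$ with $\zeta\in(0,(s-\ov s)/3]$ reflects precisely the gap $(s-\ov s)/3$ between the base space $W^s_p$ and the intermediate space $W^{\ov s}_p$ in the parabolic scale of order $3$. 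Finally, parabolic smoothing is obtained by iteratively differentiating the equation in $t$ and in $x$ (using that $\Phi$ is smooth in $f$ and that the equation is autonomous) to conclude $f\in C^\infty((0,T^+),W^k_p(\R))$ for every $k\in\N$, which by Sobolev embedding implies \eqref{eq:fg}.
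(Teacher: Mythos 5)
Your overall strategy matches the paper's: eliminate $\vartheta$ to obtain a quasilinear evolution equation on the interpolation couple $(W^{\ov s-2}_p,W^{\ov s+1}_p)$, prove generation of an analytic semigroup by freezing coefficients around the symbol $-\sigma b_\mu(1+(f'(x_0))^2)^{-3/2}|\xi|^3$, and invoke Amann's theory in the form of \cite[Theorem~1.1]{MW20}. The paper writes the nonlinearity directly in quasilinear form $\Phi(f)[h]$ (with $\kappa(f)[h]:=h''/(1+f'^2)^{3/2}$), so there is no separate semilinear remainder $F$; your $\Phi(f)=-A(f)f+F(f)$ split is a superficial stylistic variant of the same quasilinearization.

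There is, however, a genuine gap in your elimination step. You claim that the invertibility of $1+a_\mu\bA(f)^*$ on $W^{\ov s-1}_p(\R)$ follows ``by duality from the bounds for $\bA(f)$ together with \eqref{comder}.'' Duality transfers boundedness, and for $p\in(1,2]$ it indeed also transfers invertibility because $W^s_p(\R)\hookrightarrow W^{s'}_{p'}(\R)$ with $s'=s-1/p+1/p'>1+1/p'$, so the known results $\lambda-\bA(f)\in{\rm Isom}(L_{p'}(\R))$ apply. But for $p>2$ this Sobolev embedding fails: $f\in W^s_p(\R)$ does not lie in the $W^{s'}_{p'}$ scale needed to invoke the results for $\bA(f)$ on $L_{p'}$, and a Neumann-series argument is unavailable since $\|\bA(f)^*\|_{\kL(L_p)}$ is not bounded by $1$ in general. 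This is exactly why the paper proves Theorem~\ref{Thm:1} from scratch: for $p>2$ it shows injectivity of $\lambda-\bA(f)^*$ on $L_p$ by a bootstrapping to $L_2$ via a decomposition $\bA(f)^*=\bA_1^\e+\bA_2^\e$, and then an a priori estimate via a partition of unity, a Riesz--Fr\'echet--Kolmogorov compactness argument (Lemma~\ref{L:comp}), and smallness of the localized pieces (Lemma~\ref{L:comm'}), followed by the method of continuity. Propositions~\ref{Prop:2} and Corollary~\ref{Cor:1} then upgrade the invertibility to $W^{s-1}_p$ and $W^1_p$. None of this is a formal consequence of duality, and without it the definition of $\vartheta(f)[h]$ and hence of $\Phi$ is not even justified in the range $p>2$. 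Finally, your smoothing argument (``iteratively differentiate in $t$ and $x$'') is too vague: the paper instead reruns the quasilinear theory on the higher-regularity pair $(L_p,W^3_p)$ using the $W^1_p$-mapping property of Lemma~\ref{L:MP1} and Corollary~\ref{Cor:1}, and then needs a separate continuation argument (via \cite[Proposition~2.1]{MW20}) to prove that the maximal existence time on the finer scale coincides with $T^+$; this compatibility step cannot be skipped.
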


There is a vast, partially quite recent, mathematical literature on the  Muskat problem with surface tension and its one-phase version, the Hele-Shaw problem with surface tension.
The studies investigate different important aspects of the models such as the well-posedness, cf. e.g. \cite{A14, To17, ES97, ES97a, Ch93,  EGW13, EM11a,  EMM12a,  EMW18, PS16x, PS16, HTY97, BV11},
the existence of global (weak or strong) solutions \cite{GGPS19x, CP93, GGS20, JKM21},
the  stability properties of the stationary solutions \cite{EEM09c, MBV20, EM11a,  EMM12a,  PS16x, PS16, FT03, MW20, EGW13, GGPS19x},
the zero surface tension limit of the problems \cite{A14, FN21}, and the singular limit when the thickness of the  layers (or a  nondimensional parameter) vanishes \cite{EMM12, GS19, MP12, BG22x}.

The formulation \eqref{P:1} of the unconfined Muskat problem with surface tension considered herein is derived from the classical formulation \cite{Mu34} by using
  potential theory (see  \cite{DR84} for a first result in this direction).
 The advantage of this formulation compared to the classical one is that now the equations of motion can be studied under quite general assumptions on the function $f$ and this leads to quite optimal results.
Indeed, in the references \cite{MBV19, MBV18, MBV20} the well-posedness  of the problem is established for  $H^{2+\e}$-initial data, with~$\e\in(0,1)$ arbitrarily small, and these results were improved in
 the very recent papers \cite{Ngu20, FN21} where the initial data are taken from~$H^{1+\frac{d}{2}+\e}(\R^d)$, with $\e>0$ arbitrarily small and $d\geq1$.
 It is important to point out that $W_p^{1+\frac{d}{p}}(\R^d)$ is a critical space for \eqref{P:1}, see \cite{MM21, Ngu20}. 
 For the restrictive range~${p\in (1,2)}$ the well-posedness of \eqref{P:1} in $W_p^{1+\frac{1}{p}+\e}(\R)$, again with $\e>0$ arbitrarily small, was establish recently in \cite{MM21} 
 in the particular case of fluids with equal viscosities (by using a different approach than in this paper).
  Finally, the   stability properties of equilibria to the periodic version of \eqref{P:1} have been studied in \cite{MW20, MBV20}. 
  
Our main result in Theorem~\ref{MT1} extends the well-posedness theory to all $L_p$-based subcritical Sobolev spaces $W_p^{s}(\R)$, $s\in(1+1/p,2)$ and $p\in(1,\infty)$ 
in the general case~${\rho_--\rho_+,\, \mu_--\mu_+\in\R}$.
An important aspect in the analysis is the invertibility of the operator $\lambda-\bA(f)^*$, $\lambda\in\R\setminus(-1,1)$ and~${f\in W^s_p(\R)}$, in~${L_p(\R)}$.
In the case of a bounded Lipschitz domain, when $f$ is merely Lipschitz continuous,
 this property  is a deep result of harmonic analysis, see \cite{Ve84}. 
In the unbounded setting considered herein we establish this property directly by a using   different strategy than in the bounded case \cite{Ve84}. 
We also mention that there are not so many references that consider the Muskat problem in a classical $L_p$-setting with $p\neq2$ and, apart from the references  \cite{AM22, MM21}, we only  add the paper \cite{CGSV17}
where the  particular case $\sigma=0$ and $\mu_-=\mu_+$ is considered.

\subsection*{Notation} Given $n\in\N$ and Banach spaces $E,\,E_1,\ldots,E_n,\,F$, $n\in\N$, we write $\kL^n\big(\prod_{i=1}^n E_i,F\big)$ to denote the 
Banach space of bounded $n$-linear maps from $\prod_{i=1}^n E_i$ to $F$, and 
 $\kL^n_{\rm sym}(E,F)$ stands for the space of $n$-linear, bounded, and symmetric maps $A: E^n\to F$.
 Moreover, ${\rm C}^{-1}(E,F)$ (resp.~${{\rm C}^{\infty}(E,F)}$) is the space of locally Lipschitz continuous (resp. smooth) mappings from~$E$ to~$F$ and ${\rm Isom}(E,F)$ is the open subset of the Banach space $\kL(E,F)$
 of bounded operators which consists of isomorphisms.
Given an interval $I\subset \R$  and~${\alpha\in(0,1)}$, we denote by ${\rm C}^{n}(I,E)$ the set of $n$-times continuously differentiable functions and 
${\rm C}^{n+\alpha}(I,E)$ is the subset of  ${\rm C}^{n}(I,E)$ that contains only functions with locally $\alpha$-H\"older continuous $n$th derivative.
 Furthermore,  ${\rm BUC}^{n}(I,E)$  denotes the Banach space of functions with bounded and uniformly continuous derivatives up to order~$n$ and ${{\rm BUC}^{n+\alpha}(I,E)}$ is the subspace of  ${\rm BUC}^{n}(I,E)$
 which consists only of functions with  uniformly $\alpha$-H\"older continuous $n$th derivative. 
Further, let  ${\rm BUC}^{\infty}(I,E)=\cap_{n\in\N} {\rm BUC}^{n}(I,E).$

 Moreover, following~\cite{Am95}, given Banach spaces $E_i$, $i=0,\, 1$, with dense embedding $E_1\hookrightarrow E_0,$ we~set
\begin{equation}\label{neggen}
\kH(E_1,E_0)=\{A\in\kL(E_1,E_0)\,:\, \text{$-A$ generates an analytic semigroup in $\kL(E_0)$}\}.
\end{equation}

 Given $k\in\N$ and $p\in(1,\infty),$ we further let $W^k_p(\R)$ denote the standard $L_p$-based Sobolev space with the usual  $\|\cdot\|_{W^k_p}$-norm.
Moreover, if $0<s\not\in\mathbb{N}$ with $s=[s]+\{s\}$, where~${[s]\in\mathbb{N}}$ and  $\{s\}\in(0,1)$, the  Sobolev space~$W^s_p(\mathbb{R})$ is a Banach space with the norm
$\|f\|_{W^s_p}:=\big(\|f\|_{W^{[s]}_p}^p+[f]_{W^{s}_p}^p\big)^{1/p},$
where
\begin{equation}\label{normwsp}
[f]_{W^{s}_p}^p:=\int_{\mathbb{R}^2}\frac{|f^{([s])}(x)-f^{([s])}(y)|^p}{|x-y|^{1+\{s\}p}}\, d(x,y)=\int_{\mathbb{R}}\frac{\|f^{([s])} -\tau_\xi f^{([s])}\|_p^p}{|\xi|^{1+\{s\}p}}\, d\xi.
\end{equation}
Here and throughout the text  $\{\tau_\xi\}_{\xi\in\mathbb{R}}$ denotes the $C_0$-group of  left shifts, that is~${\tau_\xi f(x):=f(x+\xi),}$ and  $\|\cdot\|_p:=\|\cdot\|_{L_p(\mathbb{R})}$.
Finally, for  $s<0$, $W^s_p(\R)$ is defined as the dual  of $W^{-s}_{p'}(\R)$.

Some of our arguments use the  well-known interpolation property  
 \begin{align}\label{IP}
W^{(1-\eta)s_1+\eta s_2}_p(\mathbb{R})=(W^{s_1}_p(\mathbb{R}), W^{s_2}_p(\mathbb{R}))_{\eta,p}, \quad -\infty< s_1<s_2<\infty, \, (1-\eta)s_1+\eta s_2\not\in\mathbb{Z},
 \end{align}
where $(\cdot,\cdot)_{\vartheta,p}$, $\vartheta\in(0,1), $ is the real interpolation functor of exponent $\eta$ and parameter~$p$, cf., e.g.,~\cite{Tr78}.
We also recall the following estimate
 \begin{align}\label{MES}
 \|gh\|_{W^{r}_p}\leq  2(\|g\|_\infty\|h\|_{W^{r}_p}+\|h\|_\infty\|g\|_{W^{r}_p}),\quad\text{$g,\, h\in W^{r}_p(\R)$,}
\end{align}
  cf., e.g., \cite[Equation (2.1)]{AM22}, which holds for $r\in(1/p,1) $ and $p\in(1,\infty)$.

\subsection*{Outline} In Section~\ref{Sec:2} we introduce a family of singular integral operators that is needed in the analysis and we establish the relations \eqref{comder}.
 Section~\ref{Sec:3}  provides several  invertibility  results for~${\lambda-\bA(f)^*}$ and the equivalence of the formulations~\eqref{P:1} and~\eqref{P:2}.
 Finally, in Section~\ref{Sec:4} we formulate~\eqref{P:2} as a quasilinear parabolic evolution equation for~$f$  and we prove our main result Theorem~\ref{MT1}.

%%%%%%%%%%%%%%%%%%%%%%%%%%%%%%%%%%%%%%%%%%%%%%%%%%
%%%%%%%%%%%%%%%%%%%%%%%%%%%%%%%%%%%%%%%%%%%%%%%%%%
%%%%%%%%%%%%%%%%%%%%%%%%%%%%%%%%%%%%%%%%%%%%%%%%%%
%%%%%%%%%%%%%%%%%%%%%%%%%%%%%%%%%%%%%%%%%%%%%%%%%%
\section{A family of singular integral operators and the proof of \eqref{comder}}\label {Sec:2}
%%%%%%%%%%%%%%%%%%%%%%%%%%%%%%%%%%%%%%%%%%%%%%%%%%
%%%%%%%%%%%%%%%%%%%%%%%%%%%%%%%%%%%%%%%%%%%%%%%%%%
%%%%%%%%%%%%%%%%%%%%%%%%%%%%%%%%%%%%%%%%%%%%%%%%%%
%%%%%%%%%%%%%%%%%%%%%%%%%%%%%%%%%%%%%%%%%%%%%%%%%%
The main goal of this section is to establish the relations \eqref{comder}. To this end we first
 introduce a  family of multilinear singular integral operators which play a key role in the analysis of the unconfined Muskat problem and also of the unconfined quasistationary Stokes problem~\cite{MP2021}.
Given~${n,\,m\in\N}$,  Lipschitz continuous  functions ${a_1,\ldots, a_{m},\, b_1, \ldots, b_n:\mathbb{R}\to\mathbb{R}}$, and $\vartheta\in L_p(\R)$,  we set
\begin{equation}\label{BNM}
B_{n,m}(a_1,\ldots, a_m)[b_1,\ldots,b_n,\vartheta](x):=
\frac{1}{\pi}\PV\int_\mathbb{R}  \frac{\vartheta(x-y)}{y}\cfrac{\prod_{i=1}^{n}\big(\delta_{[x,y]} b_i\big) /y}{\prod_{i=1}^{m}\big[1+\big[\big(\delta_{[x,y]}  a_i\big) /y\big]^2\big]}\, dy,\quad x\in\R.
\end{equation}
If   ${f:\mathbb{R}\to\mathbb{R}}$  is Lipschitz continuous  we further set
\begin{equation}\label{defB0}
B^0_{n,m}(f)[\vartheta]:=B_{n,m}(f,\ldots, f)[f,\ldots,f,\vartheta].
\end{equation}

These operators have been consider in the $L_p$-setting with $p\in(1,\infty)$ in \cite{AM22, MM21}. We recall the following results.

\begin{lemma}\label{L:MP0} Let $p\in(1,\infty)$, $n,\,m \in\N$, and  $s\in(1+1/p,2)$. 
\begin{itemize}
\item[(i)] Let $a_1,\ldots, a_{m},\, b_1, \ldots, b_n:\R\to\R$ be Lipschitz continuous.
Then, there exists a positive constant~$C=C(n,\, m,\,\max_{i=1,\ldots, m}\|a_i'\|_{\infty} )$
such that
\[
\|B_{n,m}(a_1,\ldots, a_m)[b_1,\ldots,b_n,\,\cdot\,]\|_{\kL(L_p(\R))}\leq C\prod_{i=1}^{n} \|b_i'\|_{\infty}.
\]
 Moreover, $B_{n,m}\in {\rm C}^{1-}(W^1_\infty(\R)^{m},\kL^n_{\rm sym}(W^1_\infty(\R),\kL( L_p(\R)))).$\\[-1ex]
\item[(ii)] Given $a_1,\ldots, a_m \in W^s_p(\mathbb{R})$, there exists a constant~$C=C(n,\, m,\, s,\,  \max_{1\leq i\leq m}\|a_i\|_{W^s_p}$) such that
\begin{align} 
\| B_{n,m}(a_1,\ldots, a_{m})[b_1,\ldots, b_n,\vartheta]\|_{W^{s-1}_p}\leq C \|\vartheta\|_{W^{s-1}_p}\prod_{i=1}^{n}\|b_i'\|_{W^{s-1}_p} \label{REF1'}
\end{align}
for all $ b_1,\ldots, b_n\in W^s_p(\mathbb{R})$ and $\vartheta\in W^{s-1}_p(\mathbb{R}).$

Moreover,   $  B_{n,m}\in {\rm C}^{1-}(W^s_p(\mathbb{R})^m,\kL^n_{\rm sym}(W_p^{s}(\mathbb{R}),\kL(W^{s-1}_p(\mathbb{R})))).$

\item[(iii)]  Let  $n\geq1$. Given  $a_1,\ldots, a_m\in W^s_p(\mathbb{R})$, there exists 
$C=C(n,\, m,\, s,\,\max_{1\leq i\leq m}\|a_i\|_{W^s_p})$  such that
\begin{align} 
&\| B_{n,m}(a_1,\ldots, a_{m})[b_1,\ldots, b_n,\vartheta]\|_p\leq C\|b_1'\|_{p}\|\vartheta\|_{W^{s-1}_p}\prod_{i=2}^{n}\|b_i'\|_{W^{s-1}_p} \label{REF1}
\end{align}
for all $b_1,\ldots, b_n\in W^s_p(\mathbb{R})$ and $\vartheta\in W^{s-1}_p(\mathbb{R}).$

Moreover,   
$  B_{n,m}\in {\rm C}^{1-}(W^s_p(\mathbb{R})^m,\mathcal{L}^{n}( W^1_p(\mathbb{R})\times W_p^{s}(\mathbb{R})^{n-1},\kL( W^{s-1}_p(\mathbb{R}), L_p(\mathbb{R})))).$
\end{itemize} 
\end{lemma}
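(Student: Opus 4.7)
The three statements are standard estimates for multilinear Calderón-commutator-type singular integrals, and the plan is to prove (i), (ii), (iii) in order, using the lower-level estimate (i) as the workhorse in the proofs of (ii) and (iii).

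For (i), one views $B_{n,m}$ as an $n$-fold iterated commutator of the Hilbert transform with multiplication by $b_1, \ldots, b_n$, weighted by the bounded real-analytic denominator $\prod_j (1 + (\delta_{[x,y]} a_j/y)^2)^{-1}$. When $m = 0$ this is classical Calderón-Coifman-McIntosh-Meyer theory, giving the $L_p$-bound $C \prod_i \|b_i'\|_\infty$. For $m \geq 1$ one either expands $(1+X^2)^{-1}$ as a geometric series in $X^2$, convergent after renormalizing $a_j$ by a constant multiple of its Lipschitz norm, or identifies $B_{n,m}$ with pieces of the Cauchy integral on the Lipschitz graphs $\{y = a_j(x)\}$, whose $L_p$-boundedness is again classical. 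The locally Lipschitz dependence on the $a_i$'s follows by a telescoping argument based on
\[
\frac{1}{1+X^2} - \frac{1}{1+Y^2} = -\frac{(X-Y)(X+Y)}{(1+X^2)(1+Y^2)},
\]
which rewrites each difference as another $B$-type operator multiplied by $\|a_i - \tilde a_i\|_{W^1_\infty}$.

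For (ii), the key tool is the Slobodeckij characterization
\[
[g]_{W^{s-1}_p}^p \sim \int_\R \|g - \tau_\xi g\|_p^p\, |\xi|^{-1 - (s-1)p}\, d\xi,
\]
together with the translation equivariance $\tau_\xi B_{n,m}(a)[b, \vartheta] = B_{n,m}(\tau_\xi a)[\tau_\xi b, \tau_\xi \vartheta]$. A telescoping expansion replaces the arguments $a_i$, $b_j$, and $\vartheta$ by their $\xi$-shifts one at a time, and each of the resulting difference terms is again of $B$-type and bounded via (i) with one factor given in $L_p$. The embedding $W^{s-1}_p(\R) \hookrightarrow L_\infty(\R)$, valid because $(s-1)p > 1$, then absorbs the $W^1_\infty$-norms appearing in (i) into the ambient $W^s_p$-norms of $a_i$ and $b_j$. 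Integrating against $|\xi|^{-1-(s-1)p}\, d\xi$ recovers the Slobodeckij seminorms on the right-hand side, and the locally Lipschitz dependence on $a_i$ is obtained by combining this scheme with the Lipschitz part of (i).

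The main obstacle is (iii), where the improvement from $\|b_1'\|_{W^{s-1}_p}$ to the weaker $\|b_1'\|_p$ cannot come from multiplicative estimates alone. The starting point is the exact identity
\[
\frac{\delta_{[x,y]} b_1}{y} = \int_0^1 b_1'(x - \tau y)\, d\tau,
\]
which exhibits $B_{n,m}(a)[b_1, \ldots, \vartheta]$ as an integral over $\tau \in [0,1]$ of the bilinear singular integrals
\[
\mathcal T_\tau(b_1', \vartheta)(x) = \frac{1}{\pi}\PV\int_\R b_1'(x - \tau y)\, \vartheta(x - y)\, \frac{M_\tau(x,y)}{y}\, dy,
\]
in which $M_\tau$ bundles the remaining $\delta b_i/y$ and $(1 + (\delta a_j/y)^2)^{-1}$ factors and inherits the uniform $L_\infty$-bound and the Calderón-Zygmund smoothness from (i)--(ii). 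The target is $\|\mathcal T_\tau(b_1',\vartheta)\|_p \leq C(\tau) \|b_1'\|_p \|\vartheta\|_{W^{s-1}_p}$ with $C \in L_1((0,1))$. At $\tau = 1$, rescaling $z = \tau y$ turns $\mathcal T_1$ into a Calderón commutator acting on $b_1' \vartheta$, controlled by $C\|b_1'\|_p \|\vartheta\|_\infty$ via $W^{s-1}_p \hookrightarrow L_\infty$; at $\tau = 0$, $b_1'(x)$ factors outside the integral and the remaining singular integral of $\vartheta$ is controlled in $L_\infty$ by $\|\vartheta\|_{W^{s-1}_p}$. For intermediate $\tau$ one splits $\vartheta(x-y) = \vartheta(x-\tau y) + [\vartheta(x-y) - \vartheta(x - \tau y)]$: the first piece rescales to a uniformly $L_p$-bounded operator on $b_1' \vartheta$, while the second exploits the Slobodeckij regularity of $\vartheta$ (via a paraproduct or direct Slobodeckij-norm estimate) to produce an integrable singularity in $\tau$. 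The remaining multilinear and locally Lipschitz dependence on $a_i$'s and on $b_i$, $i \geq 2$, is handled exactly as in (i)--(ii).
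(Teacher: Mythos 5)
The paper offers no proof of this lemma: it simply cites \cite[Lemmas 2, 4, 5]{AM22}, so there is no internal argument to compare against, and your proposal is essentially a reconstruction of what the cited reference must do. Your plan for parts (i) and (ii) is consistent with that literature: viewing $B_{n,m}$ as a weighted Calder\'on--commutator/Cauchy-integral operator for (i), and combining translation equivariance with the Slobodeckij seminorm \eqref{normwsp} and a telescoping expansion via \eqref{rell} for (ii), is exactly the standard route, and the embedding $W^{s-1}_p(\R)\hookrightarrow L_\infty(\R)$ (valid since $s-1>1/p$) closes the multiplicative estimates as you say.

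For (iii), however, there is a genuine gap in the treatment of the ``intermediate $\tau$'' piece. After inserting $\delta_{[x,y]}b_1/y=\int_0^1 b_1'(x-\tau y)\,d\tau$, you split $\vartheta(x-y)=\vartheta(x-\tau y)+[\vartheta(x-y)-\vartheta(x-\tau y)]$. The remainder term is
\begin{equation*}
\mathcal T_\tau^{(2)}(x)=\frac{1}{\pi}\int_\R \frac{b_1'(x-\tau y)\,[\vartheta(x-y)-\vartheta(x-\tau y)]\,M_\tau(x,y)}{y}\,dy,
\end{equation*}
and since $b_1'$ is only in $L_p(\R)$, the only available way to separate the two inputs is $\|b_1'(\cdot-\tau y)\,[\vartheta(\cdot-y)-\vartheta(\cdot-\tau y)]\|_p\le \|b_1'\|_p\,\|\vartheta(\cdot-y)-\vartheta(\cdot-\tau y)\|_\infty$. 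The H\"older bound from $W^{s-1}_p(\R)\hookrightarrow {\rm BUC}^{s-1-1/p}(\R)$ gives $\|\vartheta(\cdot-y)-\vartheta(\cdot-\tau y)\|_\infty\lesssim |(1-\tau)y|^{s-1-1/p}$, so Minkowski's inequality produces $\|\mathcal T_\tau^{(2)}\|_p\lesssim \|b_1'\|_p\,[\vartheta]_{s-1-1/p}\int_\R|y|^{s-2-1/p}\,dy$, and with $s-2-1/p\in(-1,0)$ this integral diverges at $|y|\to\infty$. Using instead the crude bound $\|\vartheta(\cdot-y)-\vartheta(\cdot-\tau y)\|_\infty\le 2\|\vartheta\|_\infty$ gives $\int_{|y|>1}|y|^{-1}\,dy=\infty$, again divergent. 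So the split as stated does not produce an integrable $\tau$-bound; the obstruction is in $y$ at infinity, not in $\tau$.

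What is missing is a near/far decomposition in $y$, with the far tail treated before any FTC expansion. For $|y|>1$ one does not need the principal value, and the pointwise estimate $|\delta_{[x,y]}b_1|\le |y|^{1/p'}\|b_1'\|_p$ (from H\"older applied to $\delta_{[x,y]}b_1=\int_{x-y}^{x}b_1'$) gives
\begin{equation*}
\Big\|\int_{|y|>1}\frac{\vartheta(\cdot-y)\,\delta_{[\cdot,y]}b_1\, M}{y^2}\,dy\Big\|_p
\le C\|b_1'\|_p\,\|\vartheta\|_p\int_{|y|>1}\frac{dy}{|y|^{1+1/p}}<\infty .
\end{equation*}
Only on $\{|y|<1\}$ should you use the FTC identity and a $\vartheta$-split (preferably against the frozen value $\vartheta(x)$ after rescaling $z=\tau y$, so that $|\vartheta(x-z/\tau)-\vartheta(x)|\lesssim [\vartheta]_{s-1-1/p}\,|z/\tau|^{s-1-1/p}$ makes $\int_{|z|<\tau}|z|^{s-2-1/p}\,dz\lesssim \tau^{s-1-1/p}$, cancelling the $\tau^{-(s-1-1/p)}$ factor uniformly). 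Equivalently, the clean route is to invoke bilinear Calder\'on--Zygmund/Coifman--Meyer bounds for the operator viewed as bilinear in $(b_1',\vartheta)$ with target $L_p\times L_\infty\to L_p$. As written, the proposal omits the far-field argument and therefore does not establish \eqref{REF1}.
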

\begin{proof}
See \cite[Lemma 2]{AM22} for the proof of (i),  \cite[Lemma 5]{AM22} for the proof of (ii), and   \cite[Lemma 4]{AM22} for the proof of (iii).
\end{proof}

Before establishing \eqref{comder}, we prove in Lemma~\ref{L:MP1} below that $B_{n,m}$ maps into $W^1_p(\R)$ provided that its arguments are more regular. 
 The proof   uses the following algebraic  property
 \begin{equation}\label{rell}
 \begin{aligned}
 & \hspace{-1cm}\big(B_{n,m}(\wt a_{1}, \ldots, \wt a_{m}) -  B_{n,m}(a_1, \ldots, a_{m})\big)[b_1,\ldots, b_{n},\vartheta]\\[1ex]
 &=\sum_{i=1}^{m} B_{n+2,m+1}(\wt a_{1},\ldots, \wt a_{i},a_i,\ldots, a_{m})[b_1,\ldots, b_{n},a_i+\wt a_{i}, a_i-\wt a_{i},\vartheta].
\end{aligned}
 \end{equation}

\begin{lemma}\label{L:MP1}
 Let  $ n,\, m\in\N$,  $a_1,\ldots, a_m, \, b_1,\ldots,b_n\in W^2_p(\R),$ and   $\vartheta\in W^1_p(\R)$ be given. 
  The function~$ \varphi:=B_{n,m}(a_1,\ldots, a_{m})[b_1,\ldots, b_n,\vartheta]$ belongs then to $W^1_p(\R)$ and 
 \begin{equation}\label{FDER}
\varphi'(x)=\frac{1}{\pi}\PV\int_\mathbb{R}  \frac{\p}{\p x}\left(\frac{\vartheta(x-y)}{y}
\cfrac{\prod_{i=1}^{n}\big(\delta_{[x,y]} b_i\big) /y}{\prod_{i=1}^{m}\big[1+\big[\big(\delta_{[x,y]}  a_i\big) /y\big]^2\big]}\right)\, dy.
\end{equation}
 \end{lemma}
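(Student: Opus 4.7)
My plan is to first identify a candidate $\psi$ for $\varphi'$ by formally differentiating under the integral in \eqref{BNM}, then to confirm $\psi\in L_p(\R)$ using Lemma~\ref{L:MP0}, and finally to verify $\varphi'=\psi$ via a difference-quotient argument based on the multilinearity of $B_{n,m}$ in $(b_1,\ldots,b_n,\vartheta)$ and on identity \eqref{rell} for the $a$-dependence. The product rule together with $\partial_x\delta_{[x,y]}f=\delta_{[x,y]}f'$ suggests
\begin{align*}
\psi :={}& B_{n,m}(\bar a)[b_1,\ldots,b_n,\vartheta']+\sum_{j=1}^{n}B_{n,m}(\bar a)[b_1,\ldots,b_j',\ldots,b_n,\vartheta]\\
&-2\sum_{k=1}^{m}B_{n+2,m+1}(a_1,\ldots,a_k,a_k,\ldots,a_m)[b_1,\ldots,b_n,a_k,a_k',\vartheta],
\end{align*}
where $\bar a=(a_1,\ldots,a_m)$. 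Fix an auxiliary $\bar s\in(1+1/p,2)$; the embeddings $W^2_p\hookrightarrow W^{\bar s}_p\cap W^1_\infty$ and $W^1_p\hookrightarrow W^{\bar s-1}_p\cap L_p$ then make every argument compatible with Lemma~\ref{L:MP0}. The first summand lies in $L_p(\R)$ by Lemma~\ref{L:MP0}(i). In each remaining summand one input (namely $b_j'$ or $a_k'$) is known only in $W^1_p$; exploiting the symmetry of $B_{n',m'}$ in its $b$-slots I place this input into the first $b$-slot and invoke Lemma~\ref{L:MP0}(iii) with $s=\bar s$, which requires only $L_p$-control on the derivative of that slot. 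This yields $\psi\in L_p(\R)$.

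\textbf{Difference-quotient argument.} It suffices to prove $\xi^{-1}(\tau_\xi\varphi-\varphi)\to\psi$ in $L_p(\R)$ as $\xi\to 0$. Since $\tau_\xi\varphi=B_{n,m}(\tau_\xi\bar a)[\tau_\xi b_1,\ldots,\tau_\xi b_n,\tau_\xi\vartheta]$, I split the increment $\tau_\xi\varphi-\varphi$ into the $a$-piece $[B_{n,m}(\tau_\xi\bar a)-B_{n,m}(\bar a)][\tau_\xi b_1,\ldots,\tau_\xi b_n,\tau_\xi\vartheta]$, which \eqref{rell} rewrites as a sum of $B_{n+2,m+1}$-operators containing the factors $a_k\pm\tau_\xi a_k$, plus a slot-by-slot telescoping of the remainder that uses multilinearity in $(b_1,\ldots,b_n,\vartheta)$. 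After dividing by $\xi$, the elementary difference quotients satisfy $\xi^{-1}(\tau_\xi\vartheta-\vartheta)\to\vartheta'$ in $L_p$, while $\xi^{-1}(\tau_\xi b_j-b_j)\to b_j'$ and $\xi^{-1}(a_k-\tau_\xi a_k)\to -a_k'$ in $W^1_p$; moreover the frozen arguments $\tau_\xi a_i,\tau_\xi b_i$ converge to $a_i,b_i$ in $W^{\bar s}_p$ and $\tau_\xi\vartheta\to\vartheta$ in $W^{\bar s-1}_p$ by strong continuity of the translation group. Applying Lemma~\ref{L:MP0}(i) to the $\vartheta$-piece and Lemma~\ref{L:MP0}(iii) to the others (with the critical slot always placed first) allows one to pass to the limit in every piece, and summing yields exactly $\psi$. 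Identity \eqref{FDER} is then the compact reformulation of this, for the pointwise PV of $\partial_x$ applied to the integrand coincides with $\psi$ by the product rule.

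\textbf{Main obstacle.} The delicate point is that the derivatives $b_j'$ and $a_k'$ produced upon differentiation lie only in $W^1_p$ and are not Lipschitz, so the elementary bound of Lemma~\ref{L:MP0}(i) does not apply to the corresponding pieces of $\psi$. It is precisely the refined $L_p$-estimate of Lemma~\ref{L:MP0}(iii), requiring only $L_p$-control on the derivative of the first $b$-slot, that underpins both the $L_p$-membership of $\psi$ and the limit passage in the difference quotients.
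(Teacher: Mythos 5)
Your proposal is correct and follows essentially the same route as the paper: you identify the candidate derivative by formally differentiating, use the identity \eqref{rell} together with slot-by-slot telescoping to express the difference quotient $D_\xi\varphi$ as a sum of $B_{n,m}$- and $B_{n+2,m+1}$-terms, and pass to the limit via Lemma~\ref{L:MP0}(i) and~(iii), with the $W^1_p$-slot placed first so that only $L_p$-control on that derivative is needed. The paper's proof performs the same steps (telescoping the $b$/$\vartheta$-slots and invoking \eqref{rell} for the $a$-dependence before invoking Lemma~\ref{L:MP0}(i) and~(iii)), the only cosmetic difference being the order in which the $a$-increment and the $b/\vartheta$-increments are split off.
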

\begin{proof}. 
Recalling that the generator of the $C_0$-group $\{\tau_\xi\}_{\xi\in\R}\subset\kL(W^s_p(\R)),$ $s\geq0$,  
 is the linear operator~$[f\mapsto f']\in\kL(W^{s+1}_p(\R),W^s_p(\R)),$
 it suffices to show that  ${D_\xi\varphi:=(\tau_\xi\varphi-\varphi)/\xi}$ converges in~$L_p(\R)$ in the limit $\xi\to0$. 
 To this end we infer from~\eqref{rell} that 
\begin{align*}
 D_\xi\varphi&=\sum_{i=1}^nB_{n,m}(\tau_\xi a_1,\ldots,\tau_\xi a_m)\big[b_1,\ldots,b_{i-1}, D_\xi b_i,\tau_\xi b_{i+1},\ldots,\tau_\xi b_n,\tau_\xi\vartheta\big]\\[1ex]
&\hspace{0,45cm}+ B_{n,m}(\tau_\xi a_1,\ldots,\tau_\xi a_m)\big[b_1,\ldots,, b_n, D_\xi \vartheta\big]\\[1ex]
&\hspace{0,45cm}-\sum_{i=1}^mB_{n+2,m+1}(\tau_\xi a_1,\ldots,\tau_\xi a_i,a_i,\ldots,a_m)\big[b_1,\ldots,b_n, D_\xi a_i,\tau_\xi a_i+a_i, \vartheta\big].
\end{align*}
 In view of  Lemma \ref{L:MP0}~(i) and~(iii) we may pass  to the limit $\xi\to0$ in $L_2(\R)$ in  the latter  equality to conclude that $\varphi\in W^1_p(\R)$ with 
\begin{equation}\label{FDER'}
\begin{aligned}
\varphi'&=B_{n,m}(  a_1,\ldots,  a_m) [b_1,\ldots , b_n, \vartheta' ]\\[1ex]
&\hspace{0,45cm}+\sum_{i=1}^nB_{n,m}(a_1,\ldots,a_m)[b_1,\ldots,b_{i-1}, b_i',b_{i+1},\ldots  b_n,\vartheta]\\[1ex]
&\hspace{0,45cm}-2\sum_{i=1}^mB_{n+2,m+1}( a_i, a_1,\ldots,a_m) [b_1,\ldots,b_n, a_i',a_i, \vartheta].
\end{aligned}
\end{equation}
The relation \eqref{FDER} follows now directly from \eqref{FDER'} and  the definition \eqref{BNM} of the operator $B_{n,m}$.
\end{proof}

We are now in a position to prove \eqref{comder}.

\begin{prop}\label{Prop:1}
 Given $f\in W^{2}_p(\R)$  and $\vartheta\in W^1_p(\R)$, the functions $\bA(f)^*[\vartheta]$ and $\bB(f)^*[\vartheta]$ belong to $W^1_p(\R)$ and 
\begin{equation*}
(\bA(f)^*[\vartheta])'=-\bA(f)[\vartheta'] \qquad\text{and}\qquad (\bB(f)^*[\vartheta])'=-\bB(f)[\vartheta'].
\end{equation*} 
\end{prop}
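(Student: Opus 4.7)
The plan is to reduce everything to the building-block operators $B^0_{n,m}(f)$ introduced in Section~\ref{Sec:2}, apply Lemma~\ref{L:MP1} to obtain both the $W^1_p$-regularity and an explicit formula for the derivative, and then identify the result with $-\bA(f)[\vartheta']$ (respectively $-\bB(f)[\vartheta']$) by an algebraic rearrangement reinforced by an integration by parts in the kernel variable~$y$.

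First I would read off from the definitions the decompositions $\bA(f) = M_{f'} B^0_{0,1}(f) - B^0_{1,1}(f)$ and $\bB(f) = B^0_{0,1}(f) + M_{f'} B^0_{1,1}(f)$, where $M_{f'}$ denotes pointwise multiplication by $f'$. After the substitution $z = x-y$, the integral kernels of $B^0_{0,1}(f)$ and $B^0_{1,1}(f)$ become anti-symmetric in $(x,z)$, so these two operators are anti-self-adjoint on $L_2(\R)$, and consequently
\[
\bA(f)^*[\vartheta] = B^0_{1,1}(f)[\vartheta] - B^0_{0,1}(f)[f'\vartheta], \qquad \bB(f)^*[\vartheta] = -B^0_{0,1}(f)[\vartheta] - B^0_{1,1}(f)[f'\vartheta].
\]
Since $p>1$, the Sobolev embedding $W^1_p(\R)\hookrightarrow L_\infty(\R)$ ensures $f' \in L_\infty(\R)$ and $f'\vartheta \in W^1_p(\R)$ with $(f'\vartheta)' = f''\vartheta + f'\vartheta' \in L_p(\R)$. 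Lemma~\ref{L:MP1} applied to each summand then yields $\bA(f)^*[\vartheta],\,\bB(f)^*[\vartheta] \in W^1_p(\R)$ together with explicit expressions for the derivatives via~\eqref{FDER'}.

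Next I would subtract $-\bA(f)[\vartheta'] = -f' B^0_{0,1}(f)[\vartheta'] + B^0_{1,1}(f)[\vartheta']$ from the formula for $(\bA(f)^*[\vartheta])'$ produced by~\eqref{FDER'}. Two of the resulting terms cancel on sight, and the remaining equality reduces to
\[
f' B_{0,1}(f)[\vartheta'] - B_{0,1}(f)[f'\vartheta'] = B_{0,1}(f)[f''\vartheta] - B_{1,1}(f)[f',\vartheta] - 2 B_{2,2}(f,f)[f',f,f'\vartheta] + 2 B_{3,2}(f,f)[f,f',f,\vartheta].
\]
The left-hand side is a single principal-value integral of $(f'(x) - f'(x-y))\vartheta'(x-y)$ against the standard $B_{0,1}(f)$-kernel. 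Writing $\vartheta'(x-y) = -\partial_y[\vartheta(x-y)]$ and integrating by parts in $y$, the Leibniz rule applied to $\partial_y$ of the kernel factor (using $\partial_y(\delta_{[x,y]}f/y) = (f'(x-y) - \delta_{[x,y]}f/y)/y$) produces exactly the three terms on the right-hand side, with matching signs. The identity for $\bB(f)^*$ is obtained by a verbatim repetition of this procedure applied to $-B^0_{0,1}(f)[\vartheta] - B^0_{1,1}(f)[f'\vartheta]$.

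The hard part will be justifying the integration by parts in $y$ under the principal value. I would handle this by first establishing the identity for $\vartheta \in C^\infty_c(\R)$, for which the integrand decays at infinity and standard PV calculus applies, and then extending to arbitrary $\vartheta \in W^1_p(\R)$ by density, invoking the continuity of the $B_{n,m}$-operators stated in Lemma~\ref{L:MP0} to ensure that both sides depend continuously on $\vartheta$ in the relevant $L_p$-norms.
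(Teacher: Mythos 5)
Your proposal is correct and follows essentially the same route as the paper: express $\bA(f)^*,\bB(f)^*$ via the building blocks $B^0_{0,1}(f),B^0_{1,1}(f)$ and the algebra property of $W^1_p(\R)$, invoke Lemma~\ref{L:MP1} for the $W^1_p$-regularity and differentiation under the integral, and then close the computation by an integration by parts in $y$. The paper packages the key cancellation into a single compact kernel identity $\partial_x\big[\tfrac{\delta_{[x,y]}f-yf'(x-y)}{y^2+(\delta_{[x,y]}f)^2}\big]=\partial_y\big[\tfrac{-y\,\delta_{[x,y]}f'}{y^2+(\delta_{[x,y]}f)^2}\big]$ applied directly to~\eqref{FDER}, whereas you expand fully via~\eqref{FDER'} and verify the resulting residual identity among the $B_{n,m}$ operators term by term; these are algebraically equivalent versions of the same computation, with your rendition being somewhat longer but equally valid.
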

\begin{proof}
The operators $\bA(f)^*$ and $\bB(f)^*$ are given by the formulas
\begin{equation}\label{adj}
\begin{aligned}
\bA(f)^*[\vartheta](x)&=\frac{1}{\pi}\PV\int_\R\frac{\vartheta(x-y)}{y}\frac{\big(\delta_{[x,y]}f\big)/y-f'(x-y)}{1+\big[\big(\delta_{[x,y]}f\big)/y\big]^2}\, dy,\\[1ex]
\bB(f)^*[\vartheta](x)&=-\frac{1}{\pi}\PV\int_\R\frac{\vartheta(x-y)}{y}\frac{1+f'(x-y)\big(\delta_{[x,y]}f\big)/y}{1+\big[\big(\delta_{[x,y]}f\big)/y\big]^2}\, dy,
\end{aligned}
\end{equation}
cf. \cite{MBV18}. 
Recalling \eqref{BNM} and \eqref{defB0}, we have 
\begin{equation}\label{adjBNM}
\begin{aligned}
&\bA(f)[\vartheta] =f'B_{0,1}^0(f)[\vartheta]-B_{1,1}^0(f)[\vartheta],\qquad&&\bA(f)^*[\vartheta] =B_{1,1}^0(f)[\vartheta]-B_{0,1}^0(f)[f'\vartheta],\\[1ex]
&\bB(f)[\vartheta] =B_{0,1}^0(f)[\vartheta]+f'B_{1,1}^0(f)[\vartheta],\qquad &&\bB(f)^*[\vartheta] =-B_{0,1}^0(f)[\vartheta]-B_{1,1}^0(f)[f'\vartheta].
\end{aligned}
\end{equation}
Since $W^1_p(\R)$ is an algebra, we immediately obtain from Lemma \ref{L:MP1} that the functions in~\eqref{adjBNM} belong all to~$W^1_p(\R)$.
Taking now advantage of the identity
\begin{align*}
\frac{\p}{\p x}\frac{\delta_{[x,y]}f-yf'(x-y)}{y^2+ \big(\delta_{[x,y]}f\big)^2}
=\frac{\p}{\p y}\frac{-y\big(\delta_{[x,y]}f'\big)}{y^2+ \big(\delta_{[x,y]}f\big)^2},\qquad x,\, y\in\R,\, y\neq 0, 
\end{align*}
Lemma~\ref{L:MP1}, \eqref{adjBNM}, and  integration by parts lead to
\begin{align*}
(\bA(f)^*[\vartheta])'(x)&=\bA(f)^*[\vartheta'](x)+\frac{1}{\pi}\PV\int_\R \vartheta(x-y)\frac{\p}{\p x}\frac{\delta_{[x,y]}f-yf'(x-y)}{y^2+ \big(\delta_{[x,y]}f\big)^2}\, dy\\[1ex]
&=\bA(f)^*[\vartheta'](x)-\frac{1}{\pi}\PV\int_\R \vartheta(x-y)\frac{\p}{\p y}\frac{y\big(\delta_{[x,y]}f'\big)}{y^2+ \big(\delta_{[x,y]}f\big)^2}\, dy\\[1ex]
&=\bA(f)^*[\vartheta'](x)-\frac{1}{\pi}\PV\int_\R \vartheta'(x-y)\frac{y\big(\delta_{[x,y]}f'\big)}{y^2+ \big(\delta_{[x,y]}f\big)^2}\, dy\\[1ex]
&=\big(\bA(f)^*[\vartheta']-f'B_{0,1}^0(f)[\vartheta']+B_{0,1}^0(f)[f'\vartheta']\big)(x)\\[1ex]
&=-\bA(f)[\vartheta'](x) 
\end{align*}
for almost all $x\in\R$.
Arguing similarly, we obtain in view of the relation 
\begin{align*}
\frac{\p}{\p x}\frac{y+f'(x-y)\big(\delta_{[x,y]}f\big)}{y^2+\big(\delta_{[x,y]} f\big)^2}
=\frac{\p}{\p y}\frac{ \big(\delta_{[x,y]}f\big)\big(\delta_{[x,y]}f'\big)}{y^2+\big(\delta_{[x,y]} f\big)^2 },\qquad x,\, y\in\R,\, y\neq 0, 
\end{align*}
that 
\begin{align*}
(\bB(f)^*[\vartheta])'(x)&=\bB(f)^*[\vartheta'](x)-\frac{1}{\pi}\PV\int_\R \vartheta(x-y)\frac{\p}{\p x}\frac{y+f'(x-y)\big(\delta_{[x,y]}f\big)}{y^2+\big(\delta_{[x,y]} f\big)^2}\, dy\\[1ex]
&=\bB(f)^*[\vartheta'](x)-\frac{1}{\pi}\PV\int_\R \vartheta(x-y)\frac{\p}{\p y}\frac{ \big(\delta_{[x,y]}f\big)\big(\delta_{[x,y]}f'\big)}{y^2+\big(\delta_{[x,y]} f\big)^2 }\, dy\\[1ex]
&=\bB(f)^*[\vartheta'](x)-\frac{1}{\pi}\PV\int_\R \vartheta'(x-y)\frac{ \big(\delta_{[x,y]}f\big)\big(\delta_{[x,y]}f'\big)}{y^2+\big(\delta_{[x,y]} f\big)^2 }\, dy\\[1ex]
&=\big(\bB(f)^*[\vartheta']-f'B_{1,1}^0(f)[\vartheta']+B_{1,1}^0(f)[f'\vartheta']\big)(x)\\[1ex]
&=-\bB(f)[\vartheta'](x) 
\end{align*}
for almost all $x\in\R$. This completes the proof.
\end{proof}

%%%%%%%%%%%%%%%%%%%%%%%%%%%%%%%%%%%%%%%%%%%%%%%%%%
%%%%%%%%%%%%%%%%%%%%%%%%%%%%%%%%%%%%%%%%%%%%%%%%%%
%%%%%%%%%%%%%%%%%%%%%%%%%%%%%%%%%%%%%%%%%%%%%%%%%%
%%%%%%%%%%%%%%%%%%%%%%%%%%%%%%%%%%%%%%%%%%%%%%%%%%
\section{On the spectrum of $\bA(f)^*$ and the equivalent formulation~\eqref{P:2} of~\eqref{P:1}}\label {Sec:3}
%%%%%%%%%%%%%%%%%%%%%%%%%%%%%%%%%%%%%%%%%%%%%%%%%%
%%%%%%%%%%%%%%%%%%%%%%%%%%%%%%%%%%%%%%%%%%%%%%%%%%
%%%%%%%%%%%%%%%%%%%%%%%%%%%%%%%%%%%%%%%%%%%%%%%%%%
%%%%%%%%%%%%%%%%%%%%%%%%%%%%%%%%%%%%%%%%%%%%%%%%%%

The core of the analysis in this section addresses the invertibility of the operator $\lambda-\bA(f)^*$ with~${\lambda\in\R\setminus(-1,1)}$.
This property is used below when establishing the equivalence of the two formulations \eqref{P:1} and \eqref{P:2}, but also in Section~\ref{Sec:4} when formulating \eqref{P:2} as an evolution problem for $f$. 

Let~$p\in(1,\infty)$ and $s\in (1+1/p,2)$ be fixed in the following.
We provide three invertibility results.
Given~${f\in W^s_p(\R)}$, we show that $\lambda-\bA(f)^* $ belongs to $ {\rm Isom}(L_p(\R))$
 and   ${\rm Isom}(W^{s-1}_p(\R))$  for each~${\lambda\in\R\setminus(-1,1)}$,  see Theorem~\ref{Thm:1} and Proposition~\ref{Prop:2} below. 
 Moreover, under the assumption that~${f\in W^2_p(\R)}$,  we  prove additionally that    $\lambda-\bA(f)^*\in {\rm Isom}(W^{1}_p(\R))$, see Corollary~\ref{Cor:1}.

These invertibility properties together with the the corresponding invertibility results for $\lambda-\bA(f)$ established in \cite{AM22} immediately provide the equivalence of the formulations 
\eqref{P:1} and \eqref{P:2} in the setting of solutions which satisfy $f(t)\in W^3_p(\R)$ for all~$t>0$ (our solutions all share this regularity, see Theorem~\ref{MT1}). 
 Indeed, Corollary~\ref{Cor:1}, the relation~\eqref{comder}, \cite[Theorem 3 and Theorem 4]{AM22}, and the fact that~$a_\mu\in(-1,1)$ combined show  that, given~${f\in W^3_p(\R)}$, the unique solution $\vartheta\in W^1_p(\R)$ to
\[
(1+a_\mu\bA(f)^*)[\vartheta]=b_\mu \big(\sigma\kappa(f)-\Theta f\big)
\]
has the property that $\oo:=\vartheta'\in L_p(\R)$ is the unique solution to
\[
(1-a_\mu\bA(f))[\oo]=b_\mu \big(\sigma\kappa(f)-\Theta f\big)'.
\]
The equivalence of \eqref{P:1} and \eqref{P:2} is now a direct consequence of this  property and of~\eqref{comder}.
 
It remains to establish the invertibility results mentioned above.
The main step  is to prove the invertibility in $\kL(L_p(\R))$, see Theorem~\ref{Thm:1} below. 
When establishing this  property in~$\kL(W^{s-1}_p(\R))$ and~$\kL(W^{1}_p(\R))$ we use to a large extent   this result. 

\begin{thm}\label{Thm:1}
Given  $p\in(1,\infty)$, $s\in (1+1/p,2)$, and $f\in W^s_p(\R)$, we have
\[
\lambda-\bA(f)^* \in{\rm Isom}(L_p(\R))\qquad\text{ for all~$\lambda\in\R\setminus(-1,1)$.}
\]
\end{thm}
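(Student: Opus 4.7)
My plan is to prove Theorem~\ref{Thm:1} through a duality argument that leverages the companion invertibility of $\lambda - \bA(f)$ on $L_q(\R)$ established in \cite{AM22}. The Banach-space adjoint of $\lambda I - \bA(f)^*\in\kL(L_p(\R))$ is precisely $\lambda I - \bA(f)\in\kL(L_{p'}(\R))$, with $p':=p/(p-1)$; so the target isomorphism on $L_p$ will be extracted from an isomorphism on the dual side.

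The argument splits into two dual pieces. First, I would derive the a priori estimate
$$\|\vartheta\|_p \leq C\,\|(\lambda I - \bA(f)^*)[\vartheta]\|_p \qquad \text{for all } \vartheta\in L_p(\R).$$
Given $\phi\in L_{p'}(\R)$ with $\|\phi\|_{p'}\leq 1$, the invertibility of $\lambda I - \bA(f)$ on $L_{p'}$ yields $\psi\in L_{p'}(\R)$ with $\|\psi\|_{p'}\leq C$ and $\phi=(\lambda I - \bA(f))[\psi]$. The adjoint identity
$$\int_\R \vartheta\,\phi\, dx = \int_\R (\lambda I - \bA(f)^*)[\vartheta]\cdot\psi\, dx,$$
together with H\"older's inequality and supremization over $\phi$, produces the estimate, and hence injectivity and closed range for $\lambda I - \bA(f)^*$. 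Second, the annihilator identity $\ker A^\top = (\mathrm{Range}\, A)^\perp$ applied with $A := \lambda I - \bA(f)^*$ translates the injectivity of its adjoint $\lambda I - \bA(f)$ on $L_{p'}$ (a consequence of invertibility there) into density of $\mathrm{Range}(\lambda I - \bA(f)^*)$ in $L_p(\R)$. Closed range plus dense range equals surjectivity; combined with injectivity this yields the claimed isomorphism.

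The subtle point will be securing the invertibility of $\lambda I - \bA(f)$ on $L_{p'}(\R)$, given that $f$ only lies in the Sobolev class $W^s_p(\R)$ tied to the exponent $p$. Fortunately, the embedding $W^s_p(\R)\hookrightarrow W^1_\infty(\R)$ for $s>1+1/p$ renders $f$ Lipschitz with derivative decaying at infinity, and by Lemma~\ref{L:MP0}(i) each $\bA(f)$ is bounded on every $L_q(\R)$, $q\in(1,\infty)$, with norm depending only on $\|f'\|_\infty$. Provided the invertibility proof of \cite{AM22} requires only such Lipschitz control and therefore yields $\lambda I - \bA(f)\in\Isom(L_q(\R))$ for every $q\in(1,\infty)$, the duality argument closes. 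Otherwise, I would resort to a method-of-continuity argument along the homotopy $\tau\mapsto \lambda I - \bA(\tau f)^*$, $\tau\in[0,1]$: the family is continuous in operator norm by \eqref{rell} and Lemma~\ref{L:MP0}(i) and starts from the invertible $\lambda I$ at $\tau=0$, so a uniform-in-$\tau$ a priori estimate, obtained perhaps via Rellich-type energy identities adapted to the unbounded graph or via a Fredholm-plus-injectivity decomposition, would propagate invertibility to $\tau=1$.
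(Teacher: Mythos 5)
Your duality argument — passing to $\lambda-\bA(f)$ on $L_{p'}(\R)$ and pulling back the isomorphism — is exactly the paper's first step, but the paper observes that this closes \emph{only} for $p\in(1,2]$, and for $p>2$ there is a genuine gap in your plan that the fallback sketch does not fill. The reason: the invertibility of $\lambda-\bA(f)$ on $L_{p'}(\R)$ from \cite{AM22} is \emph{not} established under a mere Lipschitz hypothesis; it requires $f\in W^{s'}_{p'}(\R)$ with $s'>1+1/p'$. If $p\in(1,2]$ then $p'\ge 2\ge p$, and Sobolev's embedding gives $W^s_p(\R)\hookrightarrow W^{s'}_{p'}(\R)$ with $s'=s-1/p+1/p'>1+1/p'$, so the hypothesis is satisfied. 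If $p>2$ then $p'<p$ and this embedding is unavailable (on the unbounded line one cannot lower the integrability exponent), so the needed $L_{p'}$-invertibility of $\lambda-\bA(f)$ is simply not at your disposal. The pointer to Verchota \cite{Ve84} for the merely-Lipschitz case concerns bounded Lipschitz domains; the paper explicitly notes that the unbounded-graph situation is handled differently and under Sobolev, not Lipschitz, hypotheses.

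Your proposed fallback — method of continuity along $\tau\mapsto\lambda-\bA(\tau f)^*$ — is the right skeleton (the paper also ends with \cite[Proposition I.1.1.1]{Am95}), but the entire substance of the theorem for $p>2$ is the uniform a priori estimate $\|\vartheta\|_p\le C\|(\lambda-\bA(f)^*)[\vartheta]\|_p$, and your proposal only gestures at possible sources (``Rellich-type identities or a Fredholm-plus-injectivity decomposition'') without supplying one. What the paper actually does is: \textbf{(1)} prove injectivity by a truncation/bootstrapping argument showing that any $L_p$-solution of $(\lambda-\bA(f)^*)[\vartheta]=0$ in fact lies in $L_2(\R)$, where invertibility of $\lambda-\bA(f)^*$ is classical, hence $\vartheta=0$; this relies on a splitting $\bA(f)^*=\bA_1^\e+\bA_2^\e$ driven by the decay of $f$ and $f'$ at infinity and on the observation that the leftover piece of the null function is supported in a finite region where $L_p\subset L_2$; \textbf{(2)} prove the a priori bound by a contradiction argument combined with a finite $\varepsilon$-localization family: the key technical inputs are Lemma~\ref{L:comp} (compactness of the commutators $K_j=\chi_j^\e(\pi_j^\e\bA(f)^*-\bA(f)^*\pi_j^\e)$, via Riesz--Fr\'echet--Kolmogorov) and Lemma~\ref{L:comm'} (smallness of the localized operators $\chi_j^\e\bA(f)^*\chi_j^\e$ in $\kL(L_p)$ for small $\e$). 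None of this appears in your proposal. Without an actual mechanism producing the uniform estimate, the method-of-continuity endgame does not run, so as written the proof is complete only for $p\in(1,2]$ and has a substantive hole for $p\in(2,\infty)$.
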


The proof of   Theorem~\ref{Thm:1} is presented later on in this section  as it necessitates some preparation.
In the case of a bounded Lipschitz domain, the invertibility issue for the  layer potentials
for Laplace’s equation has been addressed in \cite{Ve84}. 
In the  unbounded graph geometry considered herein,  it was recently shown in \cite{AM22}, by using different arguments than in \cite{Ve84}, that, under the hypotheses of Theorem~\ref{Thm:1},
 $\lambda-\bA(f)\in{\rm Isom}(L_p(\R))$, see \cite[Theorem 3 and Theorem 4]{AM22}.
Theorem~\ref{Thm:1} is in the particular case~${p\in(1,2]}$ a direct consequence of these results. 
Indeed, if $p\in(1,2]$, Sobolev's embedding ensures that~$f\in W^s_p(\R)\hookrightarrow W^{s'}_{p'}(\R)$, where $p'$ is the adjoint exponent to $p$, that is~${1/p+1/p'=1}$, and~$s'=s-1/p+1/p'\in (1+1/p',2).$
 \cite[Theorem~3 and Theorem~4]{AM22}  then ensure that~${\lambda-\bA(f)}\in{\rm Isom}(L_{p'}(\R)),$ and, by duality, we conclude that~${\lambda-\bA(f)^*\in {\rm Isom}(L_p(\R)).}$
This argument is clearly valid only when $p\in(1,2]$. 
For $p>2$ we argue differently, the   strategy of proof being similar as that of \cite[Theorem 3 and Theorem 4]{AM22}.
To start, we  choose  for each~${\varepsilon\in(0,1)}$, a finite $\varepsilon$-localization family, that is  a family  
\[\{(\pi_j^\varepsilon, x_j^\e)\,:\, -N+1\leq j\leq N\}\subset  {\rm BUC}^\infty(\mathbb{R},[0,1])\times\R,\]
with $N=N(\varepsilon)\in\mathbb{N} $ sufficiently large, such that $x_j^\e\in\supp\pi_j^\e$, $-N+1\leq j\leq N$, and
\begin{align*}
\bullet\,\,\,\, \,\,  & \mbox{$\supp \pi_j^\varepsilon \subset\{|x|\leq \varepsilon+1/\varepsilon\}$ is an interval of length $\varepsilon$ for  $|j|\leq N-1$, $\supp \pi_{N}^\varepsilon\subset\{|x|\geq 1/\e\}$;} \\[1ex]
\bullet\,\,\,\, \,\, &\mbox{ $ \pi_j^\varepsilon\cdot  \pi_l^\varepsilon=0$ if $[|j-l|\geq2, \max\{|j|, |l|\}\leq N-1]$ or $[|l|\leq N-2, j=N];$} \\[1ex]
\bullet\,\,\,\, \,\, &\mbox{ $\displaystyle\sum_{j=-N+1}^N(\pi_j^\varepsilon)^2=1;$} \\[1ex]
 \bullet\,\,\,\, \,\, &\mbox{$\|(\pi_j^\varepsilon)^{(k)}\|_\infty\leq C\varepsilon^{-k}$ for all $ k\in\mathbb{N}, -N+1\leq j\leq N$.} 
\end{align*} 
To each finite $\varepsilon$-localization family we associate  a second family   
$$\{\chi_j^\varepsilon\,:\, -N+1\leq j\leq N\}\subset {\rm BUC}^\infty(\mathbb{R},[0,1])$$ with the following properties
\begin{align*}
\bullet\,\,\,\, \,\,  &\mbox{$\chi_j^\varepsilon=1$ on $\supp \pi_j^\varepsilon$;} \\[1ex]
\bullet\,\,\,\, \,\,  &\mbox{$\supp \chi_j^\varepsilon$ is an interval  of length $3\varepsilon$ and with the same midpoint as $ \supp \pi_j^\varepsilon$, $|j|\leq N-1$;} \\[1ex]
\bullet\,\,\,\, \,\,  &\mbox{$\supp\chi_N^\varepsilon\subset \{|x|\geq 1/\varepsilon-\varepsilon\}$ and $\xi+ \supp \pi_{N}^\varepsilon\subset\supp \chi_{N}^\varepsilon$ for $|\xi|<\varepsilon.$}  
\end{align*} 
As stated in \cite[Lemma 9]{AM22}, the norm
\[
\Big[f\mapsto \sum_{j=-N+1}^N\|\pi_j^\varepsilon f\|_{W^r_p}\Big]:W^r_p(\R)\to[0,\infty),\qquad p\in(1,\infty),\, r\geq 0,
\]
 is equivalent to the standard norm on $W^r_p(\R)$.

In order to prove the injectivity of  $\lambda-\bA(f)^*,$ we need to establish the unique solvability of the equation
\begin{equation}\label{solv}
(\lambda-\bA(f)^*)[\vartheta]=0
\end{equation}
in $L_p(\R)$. 
Since \eqref{solv} is equivalent to the system
\begin{equation}\label{syssolv}
(\lambda-\chi_j^\e\bA(f)^*\chi_j^\e)[\pi_j^\e\vartheta]=\chi_j^\e(\pi_j^\e\bA(f)^*[\vartheta]-\bA(f)^*[\pi_j^\e\vartheta]),\qquad -N+1\leq j\leq N,
\end{equation}
we next consider the operators on the  right  of \eqref{syssolv} and prove they are all compact.

\begin{lemma}\label{L:comp}
Let   $p\in[2,\infty)$, $s\in(1+1/p,2)$, and $f\in W^s_p(\mathbb{R})$.
Given ${-N+1\leq j\leq N}$, the linear operator $K_j:L_p(\mathbb{R})\to L_p(\mathbb{R})$ defined by
\begin{align}\label{Kjtheta}
K_j[\vartheta]:=\chi_{j}^\varepsilon(\pi_{j}^\varepsilon\mathbb{A}(f)^*[ \vartheta]-\mathbb{A}(f)^*[\pi_{j}^\varepsilon \vartheta]),\qquad \vartheta\in L_p(\mathbb{R}),
\end{align}
is compact.
\end{lemma}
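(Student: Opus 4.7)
The plan is to realise $K_j$ as a commutator whose integral kernel is no longer singular, deduce that $K_j$ gains a fractional derivative of regularity, and then invoke Rellich--Kondrachov after exploiting the localisation provided by $\chi_j^\e$. To this end, I would first unfold the formula \eqref{adj} for $\bA(f)^*$ and combine the two terms defining $K_j$ to obtain the representation
\begin{equation*}
K_j[\vartheta](x) = \chi_j^\e(x)\cdot\frac{1}{\pi}\PV\int_\R \frac{\delta_{[x,y]}\pi_j^\e}{y}\cdot\frac{(\delta_{[x,y]}f)/y - f'(x-y)}{1+[(\delta_{[x,y]}f)/y]^2}\,\vartheta(x-y)\,dy,
\end{equation*}
where the commutation against $\pi_j^\e$ has produced the numerator $\delta_{[x,y]}\pi_j^\e$. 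The elementary identity $(\delta_{[x,y]}\pi_j^\e)/y = \int_0^1(\pi_j^\e)'(x-ty)\,dt$ shows that this factor is uniformly bounded by $\|(\pi_j^\e)'\|_\infty$, so the principal value drops out and $K_j$ acts via a bounded kernel whose smoothness is inherited from $\pi_j^\e$.

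The second step is to translate this representation into the language of Section~\ref{Sec:2}. Using \eqref{adjBNM} the commutator decomposes into a finite sum of multilinear operators $B_{n,m}(f,\ldots,f)$ in which $\pi_j^\e$ appears as an additional $b$-argument, exactly as in the algebraic expansion used in the proof of Lemma~\ref{L:MP1}. Applying Lemma~\ref{L:MP0}(ii)--(iii) to these $B_{n,m}$-expressions and exploiting the fact that $(\pi_j^\e)'$ is a ${\rm BUC}^\infty$-function of arbitrarily high regularity, one obtains the key estimate: $K_j$ maps $L_p(\R)$ boundedly into $W^r_p(\R)$ for some $r\in(0,s-1)$. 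This gain of regularity is the heart of the proof and mirrors the compactness arguments of~\cite{AM22}.

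For $|j|\leq N-1$ the cut-off $\chi_j^\e$ has compact support, so the image of $K_j$ lies in the subspace of $W^r_p(\R)$ of functions supported on a fixed bounded interval; the Rellich--Kondrachov compact embedding on that interval then yields the compactness of $K_j$ on $L_p(\R)$. For the outer index $j=N$ the support of $\chi_N^\e$ is unbounded, but the partition-of-unity relation $\sum(\pi_j^\e)^2=1$ forces $\pi_N^\e\equiv 1$ outside a large bounded interval, so that $(\pi_N^\e)'$ itself has compact support. In the region where $\pi_N^\e(x)=1$ one has the identity $K_N[\vartheta](x)=\chi_N^\e(x)\bA(f)^*[(1-\pi_N^\e)\vartheta](x)$ with $(1-\pi_N^\e)\vartheta$ supported on a fixed compact set; the explicit kernel of $\bA(f)^*$ then produces the pointwise decay at infinity needed for $L_p$-tightness, and this decay together with the regularity gain on compact subsets provides the required compactness.

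The main obstacle will be the second step: quantifying the regularity gain from the smoothness of $\pi_j^\e$ requires a careful splitting of the commutator kernel into pieces that fit the $B_{n,m}$-framework and a delicate tracking of which factor in the multilinear estimate of Lemma~\ref{L:MP0} is allowed to absorb the smooth input. The exceptional index $j=N$ introduces additional bookkeeping through the tightness argument but no genuinely new ideas.
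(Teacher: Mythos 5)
Your overall strategy---exhibit a gain of regularity for the commutator and then combine Rellich--Kondrachov on the support of $\chi_j^\varepsilon$ with a tightness argument at $j=N$---is a legitimate route to compactness, and your identification of the difference quotient $\delta_{[x,y]}\pi_j^\varepsilon$ as the quantity that kills the kernel singularity at $y=0$ is the correct key observation. The paper instead applies the Riesz--Fr\'echet--Kolmogorov criterion directly: Step~1 establishes tightness (in fact it deals carefully with $T_3$, which carries the factor $\delta_{[x,y]}\pi_N^\varepsilon/|y|$ over the unbounded region), and Step~2 establishes the uniform $L_p$-modulus of continuity estimate $\|\tau_\xi K_j[\vartheta]-K_j[\vartheta]\|_p\le C|\xi|^{s-1-1/p}$, which is a weaker, Besov-type regularity gain proved from scratch. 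So the two approaches are close in spirit, the paper packaging equicontinuity directly and you wanting to lift it to a genuine Sobolev gain.

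However, there is a real gap in your second step. You claim the commutator $\pi_j^\varepsilon B^0_{n,m}(f)[\vartheta]-B^0_{n,m}(f)[\pi_j^\varepsilon\vartheta]$ decomposes into a finite sum of $B_{n,m}$-operators ``in which $\pi_j^\varepsilon$ appears as an additional $b$-argument,'' and then invoke Lemma~\ref{L:MP0}(ii)--(iii). Neither part of this works. If you add $\pi_j^\varepsilon$ as a $b$-argument you pass from $B_{n,m}$ to $B_{n+1,m}$, which has kernel $(\delta_{[x,y]}\pi_j^\varepsilon)/y^2$ (times the rest); the multiplication commutator, by contrast, has kernel $(\delta_{[x,y]}\pi_j^\varepsilon)/y$ (one fewer power of $y$), so it is \emph{not} a $B_{n,m}$-type operator and the algebraic expansion \eqref{rell} used in Lemma~\ref{L:MP1} (which governs commutation with translation, not multiplication) does not apply. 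Moreover, Lemma~\ref{L:MP0}(ii)--(iii) both require $\vartheta\in W^{s-1}_p(\R)$ on input, so even aside from the structural mismatch they cannot yield an $L_p\to W^r_p$ estimate. The regularity gain you want is instead exactly what Lemma~\ref{L:AL1} in the Appendix provides: applied to the two terms of \eqref{adjBNM} (with density $\vartheta$, respectively $f'\vartheta$), it gives $\|\pi_j^\varepsilon\mathbb{A}(f)^*[\vartheta]-\mathbb{A}(f)^*[\pi_j^\varepsilon\vartheta]\|_{W^1_p}\le K\|\vartheta\|_p$, which is what your argument needs. As written, your proposal does not have a correct justification of its central estimate. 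The tightness argument for $j=N$ that you sketch is plausible (and essentially the $T_1,T_2,T_3$ estimates of the paper's Step~1), but it is far less routine than your final sentence suggests: in particular, the term with $f'(x-y)\vartheta(x-y)/y$ only produces the decay $|x|^{-1}$ after exploiting that $\delta_{[x,y]}\pi_N^\varepsilon$ vanishes outside a window of fixed width around $y=x$, which is precisely where the paper must work hardest.
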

\begin{proof}  
According to the Riesz-Fr\'echet-Kolmogorov theorem, it suffices to show that
\begin{align}\label{prop12}
\sup_{\|\vartheta\|_p\leq 1}\int_{\{|x|>R\}}|K_j[\vartheta](x)|^p\, dx\underset{R\to\infty}\to0\qquad\text{and}\qquad\sup_{\|\vartheta\|_p\leq 1}\|\tau_\xi (K_j[\vartheta])-K_j[\vartheta]\|_p\underset{\xi\to0}\to0.
\end{align}

\noindent{Step 1.} In order to prove the first convergence in \eqref{prop12}, we set $a_\e:=\e+1/\e$ and we note that $\pi_N^\e=1$ for~${|x|> a_\e}$.
Let $R>2a_\e$. We then have $\chi_j^\e(x)=0$ for all $|x|>R $ and~$|j|\leq N-1$.
Hence, it remains to establish the convergence for~$j=N$.
Since for $|y|<R/2$ we have $|x-y|\geq |x|-R/2> a_\e,$ it holds that $\delta_{[x,y]}\pi_\e^N=0$ for all~$|x|>R$ and~${|y|<R/2,}$ and therefore
\begin{align*}
&\hspace{-0,5cm}\Big(\int_{\{|x|>R\}}|K_N[\vartheta](x)|^p\, dx\Big)^{1/p}\\[1ex]
&\leq \Big(\int_{\{|x|>R\}}
\Big(\int_{\{|y|>R/2\}}\Big|\vartheta(x-y)\frac{\delta_{[x,y]}f-yf'(x-y)}{y^2}\delta_{[x,y]}\pi_N^\e\Big|\, dy\Big)^p\, dx\Big)^{1/p}\\[1ex]
&\leq T_1+T_2+T_3,
\end{align*}
where, in view of $|\delta_{[x,y]}\pi_\e^N|\leq 1 $ and $\|\vartheta\|_p\leq 1$, we have
\begin{align*}
T_1&:=\Big(\int_{\{|x|>R\}}|f(x)|^p\Big(\int_{\{|y|>R/2\}}\frac{|\vartheta(x-y)|}{y^2}\, dy\Big)^p\, dx\Big)^{1/p}\leq \frac{C}{R^{(p+1)/p}}\|f\|_p\underset{R\to\infty}\to0,\\[1ex]
T_2&:=\Big(\int_{\{|x|>R\}}\Big(\int_{\{|y|>R/2\}}\frac{|(f\vartheta)(x-y)|}{y^2}\, dy\Big)^p\, dx\Big)^{1/p}\\[1ex]
&\,\leq\int_{\{|y|>R/2\}}\frac{1}{y^2}\Big(\int_{\{|x|>R\}}|(f\vartheta)(x-y)|^p\, dx\Big)^{1/p}\, dy\leq \frac{C}{R}\|f\|_\infty\underset{R\to\infty}\to0,\\[1ex]
T_3&:=\Big(\int_{\{|x|>R\}}\Big(\int_{\{|y|>R/2\}}\frac{|(f'\vartheta)(x-y)|}{|y|}|\delta_{[x,y]}\pi_\e^N|\, dy\Big)^p\, dx\Big)^{1/p}.
\end{align*}
We used  H\"older's inequality to estimate $T_1$ and Minkowski's integral inequality when we considered~$T_2$.
With respect to $T_3$ we note that, given $|x|>R,$ the term $\delta_{[x,y]}\pi_\e^N$ can be different from $0$ only if   $y\in (x-a_\e,x+a_\e)$.
Using H\"older's inequality,  we then get 
\begin{align*}
T_{3}&\leq \Big(\int_{\{|x|>R\}}\Big(\int_{x-a_\e}^{x+a_\e}\frac{|(f'\vartheta)(x-y)|}{|y|}\, dy\Big)^p\, dx\Big)^{1/p}\\[1ex]
&\leq\|f'\vartheta\|_p \Big(\int_{\{|x|>R\}}\Big(\int_{x-a_\e}^{x+a_\e}\frac{1}{|y|^{p'}}\, dy\Big)^{p/p'}\, dx\Big)^{1/p}\\[1ex]
&\leq  C\|f'\|_\infty \Big(\int_{\{|x|>R\}}\Big|\frac{1}{|x-a_\e|^{p'-1}}-\frac{1}{|x+a_\e|^{p'-1}}\Big|^{p/p'}\, dx\Big)^{1/p},
\end{align*}
where, as before, $p'\in (1,2]$ is the adjoint exponent to $p$.
Since $p'-1\in(0,1]$, the inequality~${a^r-b^r\leq (a-b)^r}$, which holds for all $0<b<a$ and $r\in(0,1)$, together with the estimate~${|x\pm a_\e|\geq |x|/2}$ for  $|x|>R$ leads us to
\begin{align*}
T_{3}&\leq  C\|f'\|_\infty \Big(\int_{\{|x|>R\}}\frac{1}{x^{2}} \, dx\Big)^{1/p}=\frac{C}{R}\|f'\|_\infty \underset{R\to\infty}\to0.
\end{align*}

\noindent{Step 2.} We now establish the second convergence in \eqref{prop12}. Let therefore~${\|\vartheta\|_p\leq 1}$
 and $\xi\in\R$ be arbitrary with $|\xi|<\min\{\e,\, 1/2\}$. 
 Given~${-N+1\leq j\leq N}$, we have
\[
\|\tau_\xi (K_j[\vartheta])-K_j[\vartheta]\|_p\leq \widetilde T_1+\widetilde T_2+\widetilde T_3,
\]
where, after a suitable change of variables,  the right side of the latter inequality may be expressed~as
\begin{align*}
\widetilde T_1&:=\|\tau_\xi\chi_j^\e-\chi_j^\e\|_\infty\|\pi_{j}^\varepsilon\mathbb{A}(f)^*[ \vartheta]-\mathbb{A}(f)^*[\pi_{j}^\varepsilon \vartheta]\|_p\leq C|\xi|\underset{\xi\to0}\to0,\\[1ex]
\widetilde T_2&:=\Big(\int_\R\Big(\int_\R K_1^\e(x,y,\xi)(f'\vartheta)(x-y)\, dy  \Big)^p\, dx\Big)^{1/p},\\[1ex]
\widetilde T_3&:=\Big(\int_\R\Big(\int_\R  K_2^\e(x,y,\xi)\vartheta (x-y)\, dy  \Big)^p\, dx\Big)^{1/p},
\end{align*}
with
\begin{align*}
K_1^\e(x,y,\xi)&:=\frac{\big(\delta_{[x+\xi,y+\xi]}\pi_j^\e\big)/(y+\xi)}{1+\big[\big(\delta_{[x+\xi,y+\xi]}f\big)/(y+\xi)\big]^2}
-\frac{\big(\delta_{[x,y]}\pi_j^\e\big)/y}{1+\big[\big(\delta_{[x,y]}f\big)/y\big]^2},\\[1ex]
K_2^\e(x,y,\xi)&:=\frac{\big[\big(\delta_{[x+\xi,y+\xi]}f\big)/(y+\xi)\big]\cdot\big[\big(\delta_{[x+\xi,y+\xi]}\pi_j^\e\big)/(y+\xi)\big]}{1+\big[\big(\delta_{[x+\xi,y+\xi]}f\big)/(y+\xi)\big]^2}
-\frac{\big[\big(\delta_{[x,y]}f\big)/y\big]\cdot\big[\big(\delta_{[x,y]}\pi_j^\e\big)/y\big]}{1+\big[\big(\delta_{[x,y]}f\big)/y\big]^2}.
\end{align*}
Let now $a_\e:=\e+1/\varepsilon+1/2$. Then, ${\rm supp \,}\pi_j^\e$,~${|j|\leq N-1}$, as well as
  ${\rm supp \,}(1-\pi_N^\e)$ are subintervals of~${\{|x|\leq a_\e-1/2\}}$.
Since $|\xi|< 1/2$ and $W^s_p(\R)\hookrightarrow {\rm BUC}^{s-1/p}(\R)$, we have
\begin{equation}\label{2est}
\begin{aligned}
&\Big|\frac{\delta_{[x+\xi,y+\xi]}\pi_j^\e}{y+\xi}-\frac{\delta_{[x,y]}\pi_j^\e}{y}\Big|
\leq C|\xi|\Big[\Big(\frac{1}{|y|}{\bf 1}_{\{|x|\leq a_\e\}}(x)+\frac{1}{y^2}\Big){\bf 1}_{\{|y|>1\}}(y)+{\bf 1}_{\{|y|<1\}}(y)\Big],\\[1ex]
&\Big|\frac{\delta_{[x+\xi,y+\xi]}f}{y+\xi}-\frac{\delta_{[x,y]}f}{y}\Big|
\leq C|\xi|^{s-1-1/p} \Big(\frac{1}{|y|} {\bf 1}_{\{|y|>1\}}(y)+{\bf 1}_{\{|y|<1\}}(y)\Big)
\end{aligned}
\end{equation}
for $x,\, y\in\R$ and $-N+1\leq j\leq N$.
 The  relations \eqref{2est} lead us to
\begin{align*}
|K_1^\e(x,y,\xi)|&\leq C|\xi|^{s-1-1/p}\Big(\frac{1}{|y|}{\bf 1}_{\{|x|\leq a_\e\}}(x){\bf 1}_{\{|y|>1\}}(y)+\frac{1}{y^2}{\bf 1}_{\{|y|>1\}}(y)+{\bf 1}_{\{|y|<1\}}(y)\Big),\\[1ex]
|K_2^\e(x,y,\xi)|&\leq C|\xi|^{s-1-1/p}\Big(\frac{1}{y^2}{\bf 1}_{\{|y|>1\}}(y)+{\bf 1}_{\{|y|<1\}}(y)\Big).
\end{align*}
H\"older's inequality and Minkowski's inequality  lead, in view of the latter estimates, to
\begin{align*}
&\Big(\int_{\{|x|\leq a_\e\}}\Big(\int_{\{|y|>1\}}\frac{|(f'\vartheta)(x-y)|}{|y|}\, dy  \Big)^p\, dx\Big)^{1/p}
\leq \|f'\vartheta\|_p\Big(\int_{\{|y|>1\}}\frac{1}{|y|^{p'}}\, dy  \Big)^{1/p'}(2a_\e)^{1/p}\leq C\|f'\|_\infty,\\[1ex]
&\Big(\int_{\R}\Big(\int_{\{|y|>1\}}\frac{1}{y^2}|(f'^{k}\vartheta)(x-y)|\, dy  \Big)^p\, dx\Big)^{1/p}
\leq \|f'^k\vartheta\|_p\Big(\int_{\{|y|>1\}}\frac{1}{|y|^{2}}\, dy  \Big)^{1/p}\leq C\|f'\|_\infty^k, \quad k=0,\, 1,\\[1ex]
&\Big(\int_{\R}\Big(\int_{\{|y|<1\}} |(f'^k\vartheta)(x-y)|\, dy  \Big)^p\, dx\Big)^{1/p}
\leq \|f'^k\vartheta\|_p\Big(\int_{\{|y|<1\}}1\, dy  \Big)^{1/p}\leq C\|f'\|_\infty^k, \quad k=0,\, 1,
\end{align*}
and we conclude that 
\[
\widetilde T_2+\widetilde T_3\leq C|\xi|^{s-1-1/p}\underset{\xi\to0}\to 0,
\]
which completes our arguments.
\end{proof}

In the next lemma we consider the left side of \eqref{syssolv} and prove that, if $\e$ is chosen sufficiently small, the operator on the left of \eqref{syssolv} is invertible 
in $\mathcal{L}(L_p(\mathbb{R}))$ for all~$\lambda\in\R\setminus(-1,1).$
\begin{lemma}\label{L:comm'}
Let   $p\in[2,\infty)$, $s\in(1+1/p,2)$, and $f\in W^s_p(\mathbb{R})$.
If $\e$ is sufficiently small, then
\begin{align*}
\|\chi_j^\varepsilon\mathbb{A}(f)^*\chi_j^\varepsilon\|_{\mathcal{L}(L_p(\mathbb{R}))}<1\qquad \text{ for all ${-N+1\leq j\leq N}$.}
\end{align*} 
\end{lemma}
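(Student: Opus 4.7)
The plan is to show that $\|\chi_j^\e \mathbb{A}(f)^* \chi_j^\e\|_{\mathcal{L}(L_p(\R))} = o(1)$ as $\e \to 0$, uniformly in $j \in \{-N+1, \ldots, N\}$; choosing $\e$ small then yields the claim. The argument splits naturally into two cases: the \emph{bulk indices} $|j| \leq N-1$ and the \emph{tail index} $j = N$.

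For the bulk case $|j| \leq N-1$, the support of $\chi_j^\e$ is an interval of length $3\e$, so in the integrand of $\chi_j^\e(x) \mathbb{A}(f)^*[\chi_j^\e \vartheta](x)$ the factor $\chi_j^\e(x-y)$ forces $|y| \leq 3\e$. Since $s > 1+1/p$, the Sobolev embedding $f' \in W^{s-1}_p(\R) \hookrightarrow C^{s-1-1/p}(\R)$ holds, and writing $\delta_{[x,y]}f/y = \int_0^1 f'(x - \tau y)\,d\tau$ yields the pointwise estimate
\[
\left| \frac{\delta_{[x,y]}f}{y} - f'(x-y) \right| \leq C(\|f\|_{W^s_p})\,|y|^{s-1-1/p}.
\]
Hence the kernel of $\mathbb{A}(f)^*$ is dominated by $C(f)|y|^{s-2-1/p}$, which is integrable on $|y| \leq 3\e$ (as $s > 1 + 1/p$). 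Minkowski's integral inequality then gives
\[
\|\chi_j^\e \mathbb{A}(f)^*[\chi_j^\e \vartheta]\|_p \leq C(f)\,\e^{s-1-1/p}\|\vartheta\|_p,
\]
a bound independent of the particular $j$.

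For the tail case $j = N$, the support of $\chi_N^\e$ lies in $\{|x| \geq 1/\e - \e\}$, and by the embedding $W^{s-1}_p(\R), W^s_p(\R) \hookrightarrow C_0(\R)$ (using $s - 1 > 1/p$), the quantity
\[
\eta(\e) := \|f\|_{L^\infty(\{|x| \geq 1/\e - \e\})} + \|f'\|_{L^\infty(\{|x| \geq 1/\e - \e\})}
\]
tends to $0$ as $\e \to 0$. Using the representation $\mathbb{A}(f)^*[\vartheta] = B_{1,1}^0(f)[\vartheta] - B_{0,1}^0(f)[f'\vartheta]$ from \eqref{adjBNM}, the second summand is bounded in $\mathcal{L}(L_p)$ by $\|f'\chi_N^\e\|_\infty$ times the uniform bound from Lemma \ref{L:MP0}(i), hence by $C(f)\eta(\e)$. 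For the first summand $\chi_N^\e B_{1,1}^0(f)[\chi_N^\e \vartheta]$, I would split the integration according to whether $x$ and $x-y$ lie in the same or opposite tails of $\mathrm{supp}\,\chi_N^\e$: in the same-sign subcase, the mean value theorem gives $|\delta_{[x,y]}f/y| = |f'(\xi)| \leq \eta(\e)$ with $\xi$ in the tail, so the kernel acquires a small factor and Lemma \ref{L:MP0}(i) (applied to a suitably localized modification of $f$ that coincides with $f$ on $\mathrm{supp}\,\chi_N^\e$) produces an $O(\eta(\e))$ operator bound; in the opposite-sign subcase the constraint $|y| \geq 2(1/\e - \e)$ combined with the $1/|y|$ factor in the kernel produces an $O(\e)$ contribution directly via Minkowski/H\"older.

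The principal obstacle is the $j=N$ case: the support is of infinite measure and $y$ is not automatically small, so one cannot reduce to a local H\"older estimate as in the bulk case. Resolving it requires combining the uniform smallness of $f, f'$ on the tail (via the $C_0$-embedding) with the pointwise $1/|y|$ decay of the kernel for opposite-sign arguments, mirroring the strategy developed for $\lambda - \mathbb{A}(f)$ in \cite{AM22}.
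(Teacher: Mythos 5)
Your bulk-case argument ($|j|\leq N-1$) is essentially identical to the paper's: both exploit the Hölder bound $|\delta_{[x,y]}f/y - f'(x-y)|\leq C|y|^{s-1-1/p}$ and Minkowski's integral inequality over $\supp\chi_j^\e-\supp\chi_j^\e$ to extract a factor $\e^{s-1-1/p}$. (Minor nit: the relevant $y$-range has length $6\e$, not $3\e$, but the power of $\e$ is the same.)

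For the tail case $j=N$ you take a genuinely different route. The paper works directly with the integral kernel of $\bA(f)^*$ and splits the integration into $|y|<1$ and $|y|>1$: on $|y|<1$ it uses an interpolated Hölder bound $|\delta_{[x,y]}f-yf'(x-y)|\leq 2\|f'\|_{L_\infty(\mathrm{tail})}^{1/2}[f']_{s-1-1/p}^{1/2}|y|^{s/2+1/2-1/2p}$ to combine smallness with integrability; on $|y|>1$ it uses Minkowski, Hölder, and the boundedness of the truncated Hilbert transform, with smallness coming from tail norms of $f$. You instead start from the representation $\bA(f)^*[\vartheta]=B^0_{1,1}(f)[\vartheta]-B^0_{0,1}(f)[f'\vartheta]$: the second summand is easily absorbed via $\|f'\chi_N^\e\|_\infty\to 0$, and for the first you propose to replace $f$ by a modification $g_\e$ agreeing with $f$ on $\{|x|\geq 1/\e-\e\}$ and with $\|g_\e'\|_\infty\to 0$. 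This is a sound idea and in fact renders your same-sign/opposite-sign split superfluous: whenever $\chi_N^\e(x)\chi_N^\e(x-y)\neq 0$ one has $|x|,|x-y|\geq 1/\e-\e$, hence $\delta_{[x,y]}f=\delta_{[x,y]}g_\e$ \emph{regardless of signs}, so $\chi_N^\e B^0_{1,1}(f)[\chi_N^\e\vartheta]=\chi_N^\e B^0_{1,1}(g_\e)[\chi_N^\e\vartheta]$ identically, and Lemma~\ref{L:MP0}~(i) gives $\|B^0_{1,1}(g_\e)\|_{\kL(L_p)}\leq C(\|g_\e'\|_\infty)\|g_\e'\|_\infty\to 0$. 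Taking $g_\e=a_\e f$ with a smooth cutoff $a_\e$ supported in $\{|x|\geq 1/\e-\e-1\}$, equal to $1$ on $\{|x|\geq 1/\e-\e\}$, and with $\|a_\e'\|_\infty\leq 2$ makes this precise; the decay $\|g_\e'\|_\infty\leq 2\|f\|_{L_\infty(\mathrm{tail})}+\|f'\|_{L_\infty(\mathrm{tail})}\to 0$ then follows from the embedding $W^s_p(\R)\hookrightarrow C_0^1(\R)$. This is the same device the paper later uses in Step~1 of the proof of Theorem~\ref{Thm:1}; applying it already here yields a shorter argument for $j=N$ than the paper's, at the price of having to verify carefully that the cutoff preserves the kernel on the relevant region.
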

\begin{proof}
\noindent{Step 1.}
We first establish  the estimate  for $|j|\leq N-1$.
Using Minkowski's inequality, the mean value theorem, and the embedding $W^s_p(\mathbb{R})\hookrightarrow {\rm BUC}^{s-1/p}(\mathbb{R}),$ we get in view of $\eqref{adj}_1$
\begin{align*}
\|\chi_j^\varepsilon\mathbb{A}(f)^*[\chi_j^\varepsilon\vartheta]\|_p
&\leq\Big(\int_\mathbb{R}\Big(\int_\mathbb{R}\chi_j^\varepsilon(x)\frac{(\chi_j^\varepsilon\vartheta)(x-y)}{y}\frac{\big(\delta_{[x,y]}f\big)/y-f'(x-y)}{1+\big[\big(\delta_{[x,y]}f\big)/y\big]^2}\, dy\Big)^{p}\, dx\Big)^{1/p}\\[1ex]
&\leq [f']_{s-1-1/p}\int_\mathbb{R}|y|^{s-2-1/p}\Big(\int_{\supp \chi_j^\varepsilon}  |(\chi_j^\varepsilon\vartheta)(x-y)|^p \, dx\Big)^{1/p}\, dy\\[1ex]
&\leq C\|f\|_{W^s_p}\|\vartheta\|_p\int_{\supp \chi_j^\varepsilon-\supp \chi_j^\varepsilon}|y|^{s-2-1/p} \, dy\\[1ex]
&\leq C\|f\|_{W^s_p}\|\vartheta\|_p\varepsilon^{s-1-1/p}.
\end{align*}
Hence, if $\e$ is sufficiently small, then $\|\chi_j^\varepsilon\mathbb{A}(f)^*\chi_j^\varepsilon\|_{\mathcal{L}(L_p(\mathbb{R}))}<1$.\medskip

\noindent{Step 2.} Let now $j=N.$ Since~$\supp\chi_N^\varepsilon\subset \{|x|\geq 1/\varepsilon-1\}$ we have~${\|\chi_N^\varepsilon\mathbb{A}(f)[\chi_N^\varepsilon\vartheta]\|_p\leq T_1+T_2}$, where 
\begin{align*}
T_1&:=\Big(\int_{\{|x|>1/\e-1\}}\Big(\int_{\{|y|<1\}} \chi_N^\e(x)\frac{\delta_{[x,y]} f-yf'(x-y)}{1+\big(\delta_{[x,y]} f/y\big)^2}\frac{(\chi_N^\e\vartheta)(x-y)}{y^2}\, dy\Big)^{p}\, dx\Big)^{1/p},\\[1ex]
T_2&:=\Big(\int_{\{|x|>1/\e-1\}}\Big(\int_{\{|y|>1\}} \chi_N^\e(x)\frac{ \delta_{[x,y]} f-yf'(x-y)}{1+\big(\delta_{[x,y]} f/y\big)^2}
\frac{(\chi_N^\e\vartheta)(x-y)}{y^2}\, dy\Big)^{p}\, dx\Big)^{1/p}.
\end{align*}
With respect to $T_1$ we note that for  $|x|> 1/\varepsilon-1$ and $|y|<1$ the mean value theorem implies that 
\[
|\delta_{[x,y]} f-yf'(x-y)|\leq 2 \|f'\|_{L_\infty(\{|x|>1/\varepsilon- 2\})}^{1/2}[f']_{s-1-1/p}^{1/2}|y|^{s/2+1/2-1/2p},
\]
and, using  Minkowski's integral inequality, we get
\begin{align*}
T_1&\leq 2\|f'\|_{L_\infty([|x|>1/\varepsilon- 2])}^{1/2}[f']_{s-1-1/p}^{1/2}
\Big(\int_\mathbb{R}\Big(\int_{\{|y|<1\}} \frac{|(\chi_N^\e\vartheta)(x-y)|}{|y|^{3/2+1/2p-s/2}}\, dy\Big)^{p}\, dx\Big)^{1/p}\\[1ex]
&\leq  C\|f\|_{W^s_p}^{1/2}\|f'\|_{L_\infty(\{|x|>1/\varepsilon- 2\})}^{1/2} \int_{\{|y|<1\}} \frac{1}{|y|^{3/2+1/2p-s/2}}\Big(\int_{\mathbb{R}}| (\chi_N^\e\vartheta)(x-y)|^p \, dx\Big)^{1/p}\, dy\\[1ex]
&\leq C\|f\|_{W^s_p}^{1/2}\|f'\|_{L_\infty(\{|x|>1/\varepsilon- 2\})}^{1/2}\|\vartheta\|_p.
\end{align*}
In order to estimate $T_2$, we note that    $T_2\leq  T_{2a}+T_{2b}+T_{2c} $, where 
\begin{align*}
T_{2a}&:= \Big(\int_{\{|x|>1/\e-1\}}|f(x)|^p\Big(\int_{\{|y|>1\}}|\vartheta(x-y)|\, dy\Big)^{p}\, dx\Big)^{1/p}\leq C\|f\|_{L_p(\{|x|>1/\varepsilon-1\})}\|\vartheta\|_p,\\[1ex]
T_{2b}&:= \Big(\int_\R \Big(\int_{\{|y|>1\}}\frac{|f\chi_N^\e\vartheta(x-y)|}{y^2}\, dy\Big)^{p}\, dx\Big)^{1/p}
\leq \int_{\{|y|>1\}}\frac{1}{y^2}\Big(\int_{\mathbb{R}} |  (f\chi_N^\e\vartheta)(x-y)|^p \, dx\Big)^{1/p}\, dy\\[1ex]
&\,\leq C\|f\chi_N^\e\vartheta\|_p\,\leq C\|f\|_{L_\infty(\{|x|>1/\varepsilon-1\})}\|\vartheta\|_p,\\[1ex]
  T_{2c}&:=\Big(\int_{\R}\Big(\int_{\{|y|>1\}}  \frac{1}{1+\big(\delta_{[x,y]} f/y\big)^2}
\frac{(f'\chi_N^\e\vartheta)(x-y)}{y}\, dy\Big)^{p}\, dx\Big)^{1/p}.
\end{align*}
We used  H\"older's inequality to estimate $T_{2a}$ and Minkowski's integral inequality for~$T_{2b}$.
With respect to $T_{2c}$, a simple algebraic manipulation reveals that
\begin{align*}
  T_{2c}&\leq \Big(\int_{\R}\Big(\int_{\{|y|>1\}}  \frac{(\delta_{[x,y]} f)^2}{y^2}
\frac{(f'\chi_N^\e\vartheta)(x-y)}{y}\, dy\Big)^{p}\, dx\Big)^{1/p}+\pi\|H_1[f'\chi_N^\e\vartheta]\|_p,
\end{align*}
where $H_1$ denotes the truncated Hilber transform
\[
H_1[\vartheta](x)=\frac{1}{\pi}\int_{\{|y|>1\}} \frac{\vartheta(x-y)}{y}\, dy.
\]
Similarly as in the case of $T_{2b}$ we get
\begin{align*}
  \Big(\int_{\R}\Big(\int_{\{|y|>1\}}  \frac{(\delta_{[x,y]} f)^2}{y^2}
\frac{(f'\chi_N^\e\vartheta)(x-y)}{y}\, dy\Big)^{p}\, dx\Big)^{1/p}\leq C\|f\|_\infty^2\|f'\|_{L_\infty(\{|x|>1/\varepsilon-1\})}\|\vartheta\|_p. 
\end{align*}
Moreover,  it is well-known that  $H_1\in\kL(L_p(\R))$, hence
\[
\|H_1[f'\chi_N^\e\vartheta]\|_p\leq C\|f'\chi_N^\e\vartheta\|_{p} \leq C\|f'\|_{L_\infty(\{|x|>1/\varepsilon-1\})} \|\vartheta\|_{p}.
\]
Due to the fact that  $f\in W^s_p(\R)$, $s>1+1/p$,  for~$\e\to0$  we have ~${\|f^{(k)}\|_{L_\infty(\{|x|>1/\varepsilon-2\})}\to0}$,~${k=0, 1},$ and~${\|f\|_{L_p(\{|x|>1/\varepsilon-1\})}\to0}$.
The estimates established above now ensure that the claim  holds true also for~$j=N$.
\end{proof}

We are now in a position to establish the aforementioned invertibility property in $\kL(L_p(\R))$.

\begin{proof}[Proof of Theorem \ref{Thm:1}]
As mentioned in the  discussion following Theorem \ref{Thm:1}, it remains to consider the case  when $p\in(2,\infty)$. 
We devise the proof in two steps.\medskip

\noindent{Step 1.} In this step we show that $\lambda-\bA(f)^*$ is injective.
To start, we fix for each $\e\in(0,1)$ a function~${a_\e\in{\rm BUC}^\infty(\R,[0,1])}$ such that $a_\e=0$ in $\{|x|<1/\e\}$, $a_\e=1$ in $\{|x|>1/\e+1\},$ and~${\|a_\e'\|_\infty\leq2.}$
Recalling \eqref{adjBNM}, we may decompose $\bA(f)^*$ as the sum
\[
\bA(f)^*=\bA_1^\e+\bA_{2}^\e,
\]
where
\begin{align*}
\bA_1^\e&:=B_{1,1}(f)[a_\e f,\cdot]-B_{0,1}(f)[(a_\e f)'\cdot],\\[1ex]
\bA_2^\e&:=B_{1,1}(f)[(1-a_\e)f,\cdot]-B_{0,1}(f)[((1-a_\e) f)'\cdot].
\end{align*}
  Lemma~\ref{L:MP0}~(i) ensures  that $\bA_{i}^\e\in\kL(L_q(\R))$, $i=1,\, 2$, for all $q\in(1,\infty)$ and
\begin{equation*}
\|\bA_1^\e\|_{\kL(L_q(\R))}\leq C\|(a_\e f)'\|_\infty\leq C\|f\|_{W^1_\infty(\{|x|>1/\e\})}\underset{\e\to0}\to0.
\end{equation*}
Consequently, for sufficiently small $\e$ we have
\begin{equation}\label{isomp2}
\lambda-\bA_1^\e\in {\rm Isom}(L_2(\R))\cap {\rm Isom}(L_p(\R))\qquad \text{for all $\lambda\in\R\setminus(-1,1)$.}
\end{equation}

Let now $\lambda\in\R\setminus(-1,1)$ be fixed and let $\vartheta\in L_p(\R)$ be a solution of $(\lambda-\bA(f)^*)[\vartheta]=0,$ or equivalently
\[
(\lambda-\bA_1^\e)[\vartheta]=\bA_2^\e[\vartheta],
\] 
where $\e\in(0,1/4)$ is fixed such that \eqref{isomp2} holds true.
Our goal is to prove that $\vartheta\in L_2(\R)$, which in view of~\eqref{isomp2} is equivalent to showing that~${\bA_2^\e[\vartheta]\in L_2(\R)}$. 
Then, since $ \lambda-\bA(f)^*\in {\rm Isom}(L_2(\R))$ for all~${f\in{\rm BUC}^1(\R)}$ and $\lambda\in\R\setminus(-1,1)$, see the arguments in the proof of of \cite[Theorem 3.5]{MBV18}, 
we conclude that  $\vartheta=0$, hence $\lambda-\bA(f)^*\in\kL(L_p(\R))$ is injective.

In order to show that ${\bA_2^\e[\vartheta]\in L_2(\R)}$ we write 
\[
\bA_2^\e[\vartheta]=\bA_2^\e[a_{\e^2}\vartheta]+\bA_2^\e[(1-a_{\e^2})\vartheta],
\]
where, due to the fact that  $(1-a_{\e^2})=0$ in $\{|x|>1/\e^2+1\}$  and $\vartheta\in L_p(\R)$ with $p\in(2,\infty)$, we have~${(1-a_{\e^2})f'\vartheta\in L_2(\R)}$.
Lemma~\ref{L:MP0}~(i) now yields $\bA_2^\e[(1-a_{\e^2})\vartheta]\in L_2(\R)$, and it remains to prove that~$\bA_2^\e[a_{\e^2}\vartheta]\in L_2(\R)$.
To this end we set $g_\e:=(1-a_\e)f$ and note that 
\[
\|\bA_2^\e[a_{\e^2}\vartheta]\|_2\leq\Big(\int_{\R}\Big(\int_{\{|y|>1\}} \frac{(a_{\e^2}\vartheta)(x-y)}{y}\frac{\big(\delta_{[x,y]} g_\e/y\big)-g_\e'(x-y)}{1+\big(\delta_{[x,y]} f/y\big)^2}\, dy\Big)^{2}\, dx\Big)^{1/2}.
\]
Indeed,   $\e\in(0,1/4)$ implies that $1/\e^2-1>1/\e+2$. 
Consequently, if $x\in\R$, then either $|x|>1/\e+2$ or $|x|<1/\e^2-1$.
Moreover, in the inner integral we integrate only over the set $\{|y|>1\},$ since the integrand is zero on~$\{|y|<1\}$. 
Assume first that $|x|<1/\e^2-1$. Then $|x-y|<1/\e^2$ and by the definition of $a_{\e^2}$ we get that $a_{\e^2}(x-y)=0$ and the integrand is zero. 
In the case when $|x|>1/\e+2$ we obtain that $|x-y|>1/\e+1,$ hence~${g_\e(x)=g_\e(x-y)=g_\e'(x-y)=0}$ and the integrand is again zero.

It thus remains to estimate the latter integral.
Because of the relation $(1-a_\e)^{(k)}a_{\e^2}=0,$ $k=0,\,1$, we have that  $g_\e^{(k)}a_{\e^2}=0,$ $k=0,\,1$, and together with H\"older's inequality we get
\begin{align*}
\|\bA_2^\e[\vartheta]\|_2\leq\Big(\int_{\R}|g_\e(x)|^2 \Big(\int_{\{|y|>1\}}\frac{|(a_{\e^2}\vartheta)(x-y)|}{y^2} \, dy\Big)^{2}\, dx\Big)^{1/2}\leq C\|a_{\e^2}\vartheta\|_p\|g_\e\|_2
\leq C\|\vartheta\|_p\|g_\e\|_2.
\end{align*}
Due to the fact that $g_\e\in L_p(\R)$ with  $p\in (2,\infty)$ satisfies $g_\e=0$ in $\{|x|>1/\e+1\}$, we deduce that~$g_\e\in L_2(\R)$, and therewith we may conclude   $\bA_2^\e[\vartheta]  \in L_2(\R)$.
Summarizing, we have established the injectivity of $\lambda-\bA(f)^*$.\medskip

\noindent{Step 2.} We now prove there exists a constant $C_0>0$ such  that
 \begin{equation}\label{DE:EST12}
 \|(\lambda-\mathbb{A}(f)^*)[\vartheta]\|_p\geq C_0\|\vartheta\|_p \qquad \mbox{for all $\vartheta\in L_p(\mathbb{R})$ and $\lambda\in\R\setminus(-1,1)$}.
 \end{equation}
To this end, we argue by contradiction and assume that the claim \eqref{DE:EST12} is false. 
Then, there exists a   sequence~${(\vartheta_n)_n\subset L_p(\mathbb{R})}$ and a 
bounded sequence ${(\lambda_n)_n\subset\mathbb{R}}$ such that~${|\lambda_n|\geq1,}$ ${\|\vartheta_n\|_p=1}$ for all~${n\in\mathbb{N}}$, 
and with~${(\lambda_n-\mathbb{A}(f)^*)[\vartheta_n]=:\varphi_n\to0}$ in $L_p(\mathbb{R})$.
 After possibly extracting some subsequences,  we may assume that $\lambda_n\to\lambda$ in $\mathbb{R}$ and $\vartheta_n\rightharpoonup \vartheta$ in $L_p(\mathbb{R}).$
 Given $h\in L_{p'}(\mathbb{R})$, it holds, in view of $\bA(f)\in\kL(L_{p'}(\R))$, cf. \eqref{OpAB} and Lemma~\ref{L:MP0}~(i), that
 \[
 \langle(\lambda-\mathbb{A}(f)^*)[\vartheta] | h \rangle_{L_2}=
\langle\vartheta|(\lambda-\mathbb{A}(f))[h]\rangle_{L_2}
=\underset{n\to\infty}\lim\langle\vartheta_n|(\lambda_n-\mathbb{A}(f))[h]\rangle_{L_2}
=\underset{n\to\infty}\lim\langle \varphi_n|h\rangle_{L_2} =0.
\]
Hence, $(\lambda-\mathbb{A}(f)^*)[\vartheta]=0$, and the result established in Step 1 implies that~$\vartheta=0$. 

Since $|\lambda_n|\geq 1$ for all $n\in\mathbb{N}$, we may choose in virtue of Lemma \ref{L:comm'}  a constant $\e>0$ such 
that~${(\lambda_n-\chi_j^\varepsilon\mathbb{A}(f)^*\chi_j^\varepsilon),\, (\lambda-\chi_j^\varepsilon\mathbb{A}(f)^*\chi_j^\varepsilon)\in  {\rm Isom}(L_p(\mathbb{R}))}$  
 for all~$-N+1\leq j\leq N$ and all $n\in\mathbb{N}$.
Owing to
 \[
 1=\|\vartheta_n\|_p\leq \sum_{j=-N+1}^N\|\pi_j^\varepsilon\vartheta_n\|_p,
 \]
there exists an integer $j_*$ with ${-N+1\leq j_*\leq N}$ and a subsequence of $(\vartheta_n)_n$ (not relabeled)    such that 
 \begin{equation}\label{contr}
 \|\pi_{j_*}^\varepsilon\vartheta_n\|_p\geq (2N)^{-1} \qquad\mbox{for all $n\in\mathbb{N}$.}
 \end{equation}
Recalling the definition \eqref{Kjtheta} of the operators $K_{j}$,~$-N+1\leq j\leq N$, we deduce from the identity~${(\lambda_n-\mathbb{A}(f)^*)[\vartheta_n]=\varphi_n}$ the following  formula
 \begin{equation}\label{nice}
 \pi_{j_*}^\varepsilon \vartheta_n=(\lambda_n-\chi_{j_*}^\varepsilon\mathbb{A}(f)^*\chi_{j_*}^\varepsilon)^{-1} [K_{j_*}[ \vartheta_n]  ]
 +(\lambda_n-\chi_{j_*}^\varepsilon\mathbb{A}(f)^*\chi_{j_*}^\varepsilon)^{-1}[\pi_{j_*}^\varepsilon\varphi_n] 
\end{equation} 
for all $n\in\mathbb{N}$. 
Since $K_{j^*}$  is compact, cf. Lemma \ref{L:comp}, it follows that   $K_{j^*}[\vartheta_n]\to0$ in $L_p(\mathbb{R})$.
Also using the convergences $\lambda_n-\chi_{j_*}^\varepsilon\mathbb{A}(f)^*\chi_{j_*}^\varepsilon \to \lambda-\chi_{j_*}^\varepsilon\mathbb{A}(f)^*\chi_{j_*}^\varepsilon$ in $\mathcal{L}(L_p(\mathbb{R}))$ 
and~${\varphi_n\to0}$ in $L_p(\mathbb{R}),$ we now infer from \eqref{nice} that~$\pi_{j_*}^\varepsilon\vartheta_n\to 0$ in $L_p(\mathbb{R})$, which contradicts (\ref{contr}).
This proves \eqref{DE:EST12}.
 
 Since $\lambda-\mathbb{A}(f)^*\in{\rm Isom}(L_p(\mathbb{R}))$   for all $|\lambda|>\|\mathbb{A}(f)^*\|_{\kL(L_p(\mathbb{R}))}$, the estimate  \eqref{DE:EST12} and
   the   method of continuity, cf.~\cite[Proposition I.1.1.1]{Am95}, imply that 
 ${\lambda-\mathbb{A}(f)^*\in{\rm Isom}(L_p(\mathbb{R}))}$   for all~$\lambda\in\R\setminus(-1,1)$, and the proof is complete. 
\end{proof}

 Before proceeding, let us emphasize, as a straight forward consequence of Theorem~\ref{Thm:1}, that  the estimate \eqref{DE:EST12}  holds  for each $p\in (1,\infty)$ (possibly with a different constant~$C_0$).
 Using this estimate, we now provide the following invertibility result.

 \begin{prop}\label{Prop:2}
Given  $p\in(1,\infty)$, $s\in (1+1/p,2)$, and $f\in W^s_p(\R)$,   we have
\[
\lambda-\bA(f)^*\in{\rm Isom}(W^{s-1}_p(\R)) \qquad\text{for all $\lambda\in\R\setminus(-1,1).$}
\]
\end{prop}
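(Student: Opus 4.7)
First I would verify boundedness: $\bA(f)^*\in\kL(W^{s-1}_p(\R))$, so that $\lambda-\bA(f)^*$ is a well-defined bounded operator on this space. Using the representation \eqref{adjBNM}, namely $\bA(f)^*[\vartheta]=B_{1,1}^0(f)[\vartheta]-B_{0,1}^0(f)[f'\vartheta]$, and noting that $W^{s-1}_p(\R)$ is a Banach algebra (because $s-1>1/p$ and \eqref{MES} applies), one gets $f'\vartheta\in W^{s-1}_p(\R)$ whenever $\vartheta\in W^{s-1}_p(\R)$. Lemma~\ref{L:MP0}(ii) (applied with $n=0$ and $n=1$) then delivers the desired boundedness. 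Injectivity of $\lambda-\bA(f)^*$ on $W^{s-1}_p$ is then immediate from Theorem~\ref{Thm:1} combined with the embedding $W^{s-1}_p\hookrightarrow L_p$.

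For surjectivity, which is the genuinely new content, the plan is to establish the a priori estimate
\begin{equation*}
\|(\lambda-\bA(f)^*)\vartheta\|_{W^{s-1}_p}\geq C(\lambda)\|\vartheta\|_{W^{s-1}_p},\qquad \vartheta\in W^{s-1}_p(\R),
\end{equation*}
with $C(\lambda)$ bounded below on compact subsets of $\R\setminus(-1,1)$, and then to close the argument via the method of continuity in the parameter $\lambda$ as in \cite[Proposition~I.1.1.1]{Am95}: invertibility holds by a Neumann series for $|\lambda|>\|\bA(f)^*\|_{\kL(W^{s-1}_p)}$, and the a priori bound then propagates the isomorphism property to all $\lambda\in\R\setminus(-1,1)$.

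The a priori estimate itself I would prove by a compactness--contradiction argument patterned on Step~2 of the proof of Theorem~\ref{Thm:1}. Suppose it fails: then there exist bounded $(\lambda_n)\subset\R\setminus(-1,1)$ and $(\vartheta_n)\subset W^{s-1}_p$ with $\|\vartheta_n\|_{W^{s-1}_p}=1$ and $\varphi_n:=(\lambda_n-\bA(f)^*)\vartheta_n\to 0$ in $W^{s-1}_p$. After extraction $\lambda_n\to\lambda$ and $\vartheta_n\rightharpoonup\vartheta$ weakly in $W^{s-1}_p$, and Theorem~\ref{Thm:1} (applied via $W^{s-1}_p\hookrightarrow L_p$) forces $\vartheta=0$. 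Using the finite $\e$-localization family from the proof of Theorem~\ref{Thm:1} together with the identity
\begin{equation*}
\pi_{j_*}^\e\vartheta_n=(\lambda_n-\chi_{j_*}^\e\bA(f)^*\chi_{j_*}^\e)^{-1}\bigl(K_{j_*}[\vartheta_n]+\pi_{j_*}^\e\varphi_n\bigr),
\end{equation*}
and the equivalent localized norm on $W^{s-1}_p$ from \cite[Lemma 9]{AM22}, a contradiction with $\|\pi_{j_*}^\e\vartheta_n\|_{W^{s-1}_p}\geq (2N)^{-1}$ would follow from two $W^{s-1}_p$-analogs of the key lemmas. The first is a version of Lemma~\ref{L:comm'}: $\|\chi_j^\e\bA(f)^*\chi_j^\e\|_{\kL(W^{s-1}_p)}<1$ for $\e$ small, to be obtained either directly through Lemma~\ref{L:MP0}(ii) (exploiting that, after localization, the integration in $y$ is restricted to $|y|\lesssim \e$ where the H\"older regularity of $f'$ produces a factor $\e^{s-1-1/p}$) or by interpolating the $L_p$-smallness of Lemma~\ref{L:comm'} against the $W^{s-1}_p$-boundedness already established. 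The second, and main technical obstacle, is a version of Lemma~\ref{L:comp}: the compactness of $K_{j}$ on $W^{s-1}_p$. I would attack it via Riesz--Fr\'echet--Kolmogorov-type criteria for fractional Sobolev spaces, exploiting that the commutator structure replaces the singular kernel $1/y$ by the regular factor $\delta_{[x,y]}\pi_j^\e/y$, which both produces enough tail decay for the $L_p$-part of the $W^{s-1}_p$-norm and controls the Slobodeckij seminorm \eqref{normwsp} through uniform equicontinuity of translates.
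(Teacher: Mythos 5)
Your outline is logically coherent, but it takes a genuinely different (and substantially more laborious) route than the paper, and it leaves its hardest step — the compactness of $K_j$ on $W^{s-1}_p(\R)$ — as a plan rather than a proof. The paper's actual argument sidesteps the whole localization/compactness machinery at the $W^{s-1}_p$-level by exploiting the translate-difference characterization of the Slobodeckij seminorm \eqref{normwsp}. Concretely, it applies the $L_p$-estimate \eqref{DE:EST12} (which, as noted after Theorem~\ref{Thm:1}, holds for all $p\in(1,\infty)$) to $\vartheta-\tau_\xi\vartheta$ and writes $(\lambda-\bA(f)^*)[\vartheta-\tau_\xi\vartheta]=(\varphi-\tau_\xi\varphi)+(\bA(f)^*-\bA(\tau_\xi f)^*)[\tau_\xi\vartheta]$, using the translation covariance $\tau_\xi(\bA(f)^*[\vartheta])=\bA(\tau_\xi f)^*[\tau_\xi\vartheta]$. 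The commutator term is then estimated pointwise in $\xi$ via \eqref{rell} and Lemma~\ref{L:MP0}(i) and (iii) by $C\|f'-\tau_\xi f'\|_p\|\vartheta\|_{W^{s'-1}_p}$, so that dividing by $|\xi|^{1+(s-1)p}$ and integrating produces exactly $[f']_{W^{s-1}_p}$ as a coefficient. This yields the a priori bound $\|\vartheta\|_{W^{s-1}_p}\leq C(\|(\lambda-\bA(f)^*)[\vartheta]\|_{W^{s-1}_p}+\|\vartheta\|_{W^{s'-1}_p})$, and interpolation plus \eqref{DE:EST12} absorbs the lower-order term before the method of continuity closes the argument. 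Compared to your plan, this has two concrete advantages: it needs no $W^{s-1}_p$-analogue of Lemma~\ref{L:comp} or Lemma~\ref{L:comm'} (so no new compactness result is required), and it is insensitive to the case split $p\gtrless 2$ that governs those lemmas. In your plan, by contrast, the localized smallness you want in $\kL(W^{s-1}_p(\R))$ does not follow from interpolating the $L_p$-smallness of Lemma~\ref{L:comm'} against $W^{s-1}_p$-boundedness — interpolation only delivers smallness in the strictly intermediate scales $W^{s''-1}_p$, $s''<s$ — so you would need a direct argument, and the compactness of $K_j$ on $W^{s-1}_p(\R)$ would require controlling the Slobodeckij seminorm of $K_j[\vartheta]$ uniformly in $\|\vartheta\|_{W^{s-1}_p}\le1$, which is a nontrivial addition you have only sketched. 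Your opening observations (boundedness via \eqref{adjBNM}, the algebra property, Lemma~\ref{L:MP0}(ii); injectivity via $W^{s-1}_p\hookrightarrow L_p$ and Theorem~\ref{Thm:1}) and your use of the method of continuity agree with the paper.
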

\begin{proof} 
Given $\vartheta\in W^{s-1}_p(\mathbb{R})$ and $\lambda\in\R\setminus(-1,1)$, we set  $\varphi:=(\lambda-\mathbb{A}(f)^*)[\vartheta]$. 
The formula \eqref{adjBNM}, Lemma~\ref{L:MP0}~(ii), and the algebra property of $W^{s-1}_p(\R)$ entail that $\varphi\in W^{s-1}_p(\mathbb{R})$. 
Letting  $C_0>0$ denote the constant in \eqref{DE:EST12} and recalling \eqref{normwsp}, we have
\begin{equation}\label{frf}
\begin{aligned}
[\vartheta]^p_{W^{s-1}_p}
&=\int_{\mathbb{R}}\frac{\|\vartheta-\tau_{\xi}\vartheta\|_p^p}{|\xi|^{1+(s-1)p}}\, d{\xi}
\leq C_0^p\int_{\mathbb{R}}\frac{\|(\lambda-\mathbb{A}(f)^*)[\vartheta-\tau_{\xi} \vartheta ]\|_p^p}{|\xi|^{1+(s-1)p}}\, d{\xi}\\[1ex]
&\leq 2^pC_0^p\int_{\mathbb{R}}\frac{\|\varphi-\tau_{\xi}\varphi\|_p^p}{|\xi|^{1+(s-1)p}}\, d{\xi}
+2^pC_0^p\int_{\mathbb{R}}\frac{\|(\mathbb{A}(f)^* -\mathbb{A}(\tau_{\xi}f)^*)[\tau_\xi\vartheta])\|_p^p}{|\xi|^{1+(s-1)p}}\, d{\xi}\\[1ex]
&= 2^pC_0^p[(\lambda-\mathbb{A}(f)^*)[\vartheta]]^p_{W^{s-1}_p}+2^pC_0^p\int_{\mathbb{R}}\frac{\|(\mathbb{A}(f)^* -\mathbb{A}(\tau_{\xi}f)^*)[\tau_\xi\vartheta])\|_p^p}{|\xi|^{1+(s-1)p}}\, d{\xi}.
\end{aligned}
\end{equation}
In order to  estimate the integrand in last term of the latter inequality we infer from \eqref{adjBNM} that
\begin{align*}
\|\mathbb{A}(f)^* -\mathbb{A}(\tau_{\xi}f)^*)[\tau_\xi\vartheta]\|_p&\leq \|(B^0_{1,1}(f) -B_{1,1}^0(\tau_{\xi}f))[\tau_\xi\vartheta]\|_p\\[1ex]
&\hspace{0,45cm}+\|B_{0,1}(f)[f'\tau_\xi\vartheta]-B_{0,1}(\tau_{\xi}f)[\tau_{\xi}(f'\vartheta)]\|_p.
\end{align*}
Let now $s'\in(1+1/p,s)$ be chosen. 
The relation \eqref{rell}, Lemma \ref{L:MP0}~(i) and~(iii) (with $s=s'$),  and the embedding  $W^{s'-1}_p(\R)\hookrightarrow {\rm BUC}(\R)$ lead us to
 \begin{align*}
\|(B^0_{1,1}(f) -B_{1,1}^0(\tau_{\xi}f))[\tau_\xi\vartheta]\|_p
&\leq\|B_{1,1}(\tau_{\xi}f)[f-\tau_{\xi}f,\tau_\xi\vartheta]\|_p\\[1ex]
&\qquad+\|B_{3,2}(f,\tau_{\xi}f)[f+\tau_{\xi}f, f-\tau_{\xi}f,f,\tau_\xi\vartheta]\|_p\\[1ex]
&\leq C\|f'-\tau_{\xi}f'\|_p\|\vartheta\|_{W^{s'-1}_p}, 
\end{align*}
and
 \begin{align*}
&\hspace{-0.5cm}\|B_{0,1}(f)[f'\tau_\xi\vartheta]-B_{0,1}(\tau_{\xi}f)[\tau_{\xi}(f'\vartheta)]\|_p\\[1ex]
&\leq \|B_{0,1}(\tau_{\xi}f)[(f'-\tau_{\xi}f')\tau_\xi\vartheta)]\|_p++\|B_{2,2}(f,\tau_{\xi}f)[f+\tau_{\xi}f, f-\tau_{\xi}f,f'\tau_{\xi}\vartheta]\|_p\\[1ex]
&\leq C\|f'-\tau_{\xi}f'\|_{p}\|\vartheta\|_{W^{s'-1}_p} 
\end{align*}
for all $\xi\in\R$, where $C>0$ depends only on $f$.
These estimates together with \eqref{frf} and ~\eqref{DE:EST12} imply there exists a constant $C_1$ (which depends only on $f$) such that 
\begin{align}\label{L:ES}
\|\vartheta\|_{W^{s-1}_p}\leq C_1(\|(\lambda-\mathbb{A}(f)^*)[\vartheta]\|_{W^{s-1}_p}+\|\vartheta\|_{W^{s'-1}_p}),\qquad \vartheta\in W^{s-1}_p(\mathbb{R}).
\end{align} 
The  interpolation property \eqref{IP} and Young's inequality now imply there exists $C>0$ with
 \[\|\vartheta\|_{W^{s'-1}_p}\leq \frac{1}{2C_1}\|\vartheta\|_{W^{s-1}_p}+C\|\vartheta\|_p,\qquad \vartheta\in W^{s-1}_p(\mathbb{R}).\]
In view of this estimate and relying also on \eqref{L:ES} and  \eqref{DE:EST12}, we may find a constant $C$ such that 
\begin{align*}
\|\vartheta\|_{W^{s-1}_p}\leq C \|(\lambda-\mathbb{A}(f)^*)[\vartheta]\|_{W^{s-1}_p}, \quad\mbox{$\vartheta\in W^{s-1}_p(\mathbb{R})$ and $\lambda\in\R\setminus(-1,1),$}
\end{align*}
The method of continuity \cite[Proposition I.1.1.1]{Am95} leads  now, similarly as in the proof of Theorem~\ref{Thm:1}, to the desired conclusion.
 \end{proof}
 
As a final result of  this section we establish the following  corollary.
\begin{cor}\label{Cor:1}
Given  $f\in W^2_p(\R)$, we have $\lambda-\bA(f)^* \in{\rm Isom}(W^{1}_p(\R)) $ for all $\lambda\in\R\setminus(-1,1)$.
\end{cor}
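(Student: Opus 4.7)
The plan is to follow the same three-ingredient argument used to prove Proposition~\ref{Prop:2}: boundedness on $W^1_p(\R)$, an a priori estimate, and invertibility for parameters of large modulus, combined with the method of continuity.

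First, I would verify that $\bA(f)^*\in\kL(W^1_p(\R))$. Given $\vartheta\in W^1_p(\R)$, Lemma~\ref{L:MP0}~(i) shows that $\bA(f)^*[\vartheta]\in L_p(\R)$ (noting that $f\in W^2_p(\R)\hookrightarrow W^1_\infty(\R)$ since $p>1$), and Proposition~\ref{Prop:1} yields $(\bA(f)^*[\vartheta])'=-\bA(f)[\vartheta']\in L_p(\R)$ together with the bound $\|(\bA(f)^*[\vartheta])'\|_p\leq C\|\vartheta'\|_p$. Consequently $\bA(f)^*[\vartheta]\in W^1_p(\R)$ and $\lambda-\bA(f)^*\in\kL(W^1_p(\R))$.

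Second, and this is the key step, I would establish an a priori estimate. For $\vartheta\in W^1_p(\R)$ and $\varphi:=(\lambda-\bA(f)^*)[\vartheta]\in W^1_p(\R)$, Proposition~\ref{Prop:1} gives
\[
\varphi'=\lambda\vartheta'-(\bA(f)^*[\vartheta])'=(\lambda+\bA(f))[\vartheta'].
\]
Since $-\lambda\in\R\setminus(-1,1)$, the invertibility result \cite[Theorem~3 and Theorem~4]{AM22} yields $\lambda+\bA(f)\in{\rm Isom}(L_p(\R))$, hence $\|\vartheta'\|_p\leq C\|\varphi'\|_p$. Combining this with the estimate $\|\vartheta\|_p\leq C\|\varphi\|_p$ coming from \eqref{DE:EST12} (valid for all $p\in(1,\infty)$, as emphasized after the proof of Theorem~\ref{Thm:1}), I obtain
\[
\|\vartheta\|_{W^1_p}\leq C\|(\lambda-\bA(f)^*)[\vartheta]\|_{W^1_p},
\]
which shows that $\lambda-\bA(f)^*$ is injective with closed range in $W^1_p(\R)$.

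To conclude, for $|\mu|$ sufficiently large the operator $\mu-\bA(f)^*$ is invertible on $W^1_p(\R)$ by a Neumann series argument based on the bound from the first step. Connecting $\lambda$ to such a $\mu$ along a straight path in $\R\setminus(-1,1)$ lying on one side of $[-1,1]$ and applying the method of continuity \cite[Proposition~I.1.1.1]{Am95}---precisely as at the end of the proof of Proposition~\ref{Prop:2}---yields $\lambda-\bA(f)^*\in{\rm Isom}(W^1_p(\R))$. The main technical obstacle I anticipate is verifying that the a priori constant can be chosen uniformly along the continuation path; this reduces to showing that the constants in \eqref{DE:EST12} and in \cite[Theorem~3, Theorem~4]{AM22} are uniform on compact subsets of $\R\setminus(-1,1)$, which follows by rerunning the contradiction argument in the proof of Theorem~\ref{Thm:1} with a sequence of parameters $\lambda_n$ in place of a single $\lambda$.
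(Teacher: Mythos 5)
Your proposal is correct and follows essentially the same argument as the paper: apply Proposition~\ref{Prop:1} to transfer the derivative onto $\vartheta'$, combine the $L_p$-estimate \eqref{DE:EST12} for $\lambda-\bA(f)^*$ with the $L_p$-invertibility of $\lambda+\bA(f)$ from \cite[Theorems~3 and 4]{AM22} to obtain the a priori bound $\|\vartheta\|_{W^1_p}\leq C\|(\lambda-\bA(f)^*)[\vartheta]\|_{W^1_p}$, and close with the method of continuity. The uniformity concern in your final paragraph is not a real obstacle: the estimate \eqref{DE:EST12} is already stated with a constant $C_0$ uniform over all $\lambda\in\R\setminus(-1,1)$ (the contradiction argument in Step~2 of Theorem~\ref{Thm:1} is run precisely with a sequence $(\lambda_n)$), so no additional work is needed.
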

\begin{proof}
Given $\vartheta\in W^1_p(\R)$, we infer from Proposition~\ref{Prop:1} that  $\bA(f)^*[\vartheta]$ belongs to $ W^1_p(\R)$  and its derivative satisfies $(\bA(f)^*[\vartheta])'=-\bA(f)[\vartheta'].$
This property,  \cite[Theorem 3 and Theorem 4]{AM22}, and   \eqref{DE:EST12} imply there exists a constant $C>0$ such that 
\begin{align*}
\|(\lambda-\bA(f)^*)[\vartheta]\|_{W^1_p}^p&=\|(\lambda-\bA(f)^*)[\vartheta]\|_{p}^p+\|((\lambda-\bA(f)^*)[\vartheta])'\|_{p}^p\\[1ex]
&=\|(\lambda-\bA(f)^*)[\vartheta]\|_{p}^p+\|(\lambda+\bA(f))[\vartheta']\|_{p}^p\\[1ex]
&\geq C\|\vartheta\|_{W^1_p}^p
\end{align*}
for all $\lambda\in\R\setminus(-1,1)$ and all  $\vartheta\in W^1_p(\R)$.
The claim follows now from this estimate via the method of continuity.
\end{proof}

%%%%%%%%%%%%%%%%%%%%%%%%%%%%%%%%%%%%%%%%%%%%%%%%%%
%%%%%%%%%%%%%%%%%%%%%%%%%%%%%%%%%%%%%%%%%%%%%%%%%%
%%%%%%%%%%%%%%%%%%%%%%%%%%%%%%%%%%%%%%%%%%%%%%%%%%
%%%%%%%%%%%%%%%%%%%%%%%%%%%%%%%%%%%%%%%%%%%%%%%%%%
\section{The quasilinear evolution problem and the  proof of the main result}\label {Sec:4}
%%%%%%%%%%%%%%%%%%%%%%%%%%%%%%%%%%%%%%%%%%%%%%%%%%
%%%%%%%%%%%%%%%%%%%%%%%%%%%%%%%%%%%%%%%%%%%%%%%%%%
%%%%%%%%%%%%%%%%%%%%%%%%%%%%%%%%%%%%%%%%%%%%%%%%%%
%%%%%%%%%%%%%%%%%%%%%%%%%%%%%%%%%%%%%%%%%%%%%%%%%%

As a first step we take advantage of the quasilinear character of the curvature operator to reformulate, in virtue of Proposition~\ref{Prop:2}, 
the evolution problem \eqref{P:2} as a quasilinear evolution problem for~$f$ in a suitable functional analytic setting.
More precisely, we recast \eqref{P:2} as the evolution problem
\begin{equation}\label{NNEP}
\frac{df}{dt}(t)=\Phi(f(t))[f(t)],\quad t>0,\qquad f(0)=f_0,
\end{equation}
where the nonlinear operator $[f\mapsto\Phi(f)]:W^s_p(\R)\to \kL(W^{s+1}_p(\R),W^{s-2}_p(\R))$, with $p\in(1,\infty)$ and~${s\in (1+1/p,2)}$, is defined as follows.
Given $f\in W^s_p(\R)$ and $h\in W^{s+1}_p(\R)$, let 
\begin{align}\label{qlk}
\kappa(f)[h]:=\frac{h''}{(1+f'^2)^{3/2}}.
\end{align}
If $f\in W^2_p(\R)$, then  $\kappa(f)[f]$ coincides with  the curvature $\kappa(f)$ of the interface $\{y=f(x)\}$.
This operator is smooth, that is
 \begin{equation}\label{Reg:kappa}
 \kappa\in {\rm C}^\infty(W^s_p(\R),\kL(W^{s+1}_p(\R), W^{s-1}_p(\R))),
 \end{equation}
see, e.g. \cite[Lemma 3.1]{MM21}.
In virtue of Proposition~\ref{Prop:2} and of $a_\mu\in(-1,1)$, there exits, for given functions~$f\in W^s_p(\R)$ and $h\in W^{s+1}_p(\R)$,  a unique solution~${\vartheta:=\vartheta(f)[h]\in W^{s-1}_p(\R)}$ to the equation
\begin{equation}\label{theta}
(1+a_\mu\bA(f)^*)[\vartheta]=b_\mu \big(\sigma\kappa(f)[h]-\Theta h\big).
\end{equation}
A direct consequence of \eqref{Reg:kappa}, of the smoothness of the map which associate to an isomorphism its inverse, of the representation \eqref{adjBNM} of $\bA(f)^*$, and of the property
\begin{equation}\label{Reg:bnm}
[f\mapsto B^0_{n,m}(f)]\in {\rm C}^\infty(W^s_p(\R),\kL(W^{s-1}_p(\R))),
\end{equation}
we obtain that 
\begin{equation}\label{Reg:theta}
 \vartheta\in {\rm C}^\infty(W^s_p(\R),\kL(W^{s+1}_p(\R), W^{s-1}_p(\R))).
 \end{equation}
The property \eqref{Reg:bnm} follows from Lemma~\ref{L:MP0}~(ii), by arguing as in   \cite[Appendix C]{MP2021} (where the case $p=2$ is proven in detail).
Having introduced these quasilinear operators, we now define the operator $\Phi$ by setting
\begin{equation}\label{Phi}
\Phi(f)[h]:=-(\bB(f)^*[\vartheta(f)[h]])'.
\end{equation}
In view of the continuity of the operator $d/dx: W^{s-1}_p(\R)\to W^{s-2}_p(\R)$, we deduce from \eqref{adjBNM}, \eqref{Reg:bnm}, and~\eqref{Phi} that 
\begin{equation}\label{Reg:Phi}
 \Phi\in {\rm C}^\infty(W^s_p(\R),\kL(W^{s+1}_p(\R), W^{s-2}_p(\R))).
 \end{equation}

As a second step, we prove in Theorem~\ref{Thm:2} below that the evolution problem \eqref{NNEP} is of parabolic type,  
that is we show that $\Phi(f)$ is, when viewed as an unbounded operator in $W^{s-2}_p(\R)$ with definition domain $W^{s+1}_p(\R)$, the generator of an analytic semigroup in $\kL(W^{s-2}_p(\R))$.
This property enables us to use the abstract quasilinear parabolic theory presented in~\cite{Am93, MW20} when establishing Theorem~\ref{MT1}.

\begin{thm}\label{Thm:2}
Given $p\in (1,\infty)$, $s\in(1+1/p,2)$, and $f\in W^{s}_p(\R)$, we have
 $$-\Phi(f)\in\kH(W^{s+1}_p(\R),W^{s-2}_p(\R)).$$
\end{thm}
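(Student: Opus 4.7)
The plan is to apply the freezing-of-coefficients technique, exploiting that the highest-order contribution to $\Phi(f)[h]$ stems from the curvature part $\sigma\kappa(f)[h]=\sigma h''/(1+f'^2)^{3/2}$ in \eqref{theta}, which together with the exterior derivative in \eqref{Phi} makes $\Phi(f)$ a third-order pseudodifferential operator with a negative definite principal symbol. The strategy mirrors that of Section~\ref{Sec:3}: localize via the $\e$-localization family, approximate $f$ on each patch by its affine tangent, compute the frozen operator explicitly, and patch the resolvent estimates back together.

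For the frozen model I would take an affine profile $\ell_a(x)=ax+b$. The integrals in \eqref{OpAB} and \eqref{adj} collapse: $\bA(\ell_a)\equiv 0$ while $\bB(\ell_a)$ reduces to the Hilbert transform $H$, whence $\bB(\ell_a)^*=-H$. Hence \eqref{theta} and \eqref{Phi} yield explicitly
\begin{align*}
\Phi(\ell_a)[h]=-\sigma b_\mu(1+a^2)^{-3/2}|D|^3 h-\Theta b_\mu|D|h,
\end{align*}
where $|D|^k$ denotes the Fourier multiplier with symbol $|\xi|^k$. The principal symbol $-c(a)|\xi|^3$ with $c(a)=\sigma b_\mu(1+a^2)^{-3/2}>0$ is strictly negative, so standard sectorial-operator theory for Fourier multipliers yields uniform resolvent estimates of the form $\|(\lambda-\Phi(\ell_a))^{-1}\|_{\kL(W^{s-2}_p(\R))}\lesssim 1/|\lambda|$ and $\|(\lambda-\Phi(\ell_a))^{-1}\|_{\kL(W^{s-2}_p(\R),W^{s+1}_p(\R))}\lesssim 1$ on a right half-plane sector, uniformly for $a$ in any bounded set; this is equivalent to $-\Phi(\ell_a)\in\kH(W^{s+1}_p(\R),W^{s-2}_p(\R))$.

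For the localization step, fix $f\in W^s_p(\R)$ and pick the localization and cutoff families $\{(\pi_j^\e,x_j^\e)\}$ and $\{\chi_j^\e\}$ from Section~\ref{Sec:3}. For $|j|\le N-1$ let $\ell_j$ be the affine tangent of $f$ at $x_j^\e$, and set $\ell_N:=0$ (admissible because $f,f'\to 0$ at infinity since $s>1+1/p$). Using the algebraic identity \eqref{rell} to expand differences of the $B_{n,m}$ operators into terms each carrying a factor $f-\ell_j$, together with parts (ii)--(iii) of Lemma~\ref{L:MP0} and the embedding $W^s_p(\R)\hookrightarrow {\rm BUC}^{s-1/p}(\R)$, I would derive a commutator-type estimate
\begin{align*}
\|\chi_j^\e(\Phi(f)-\Phi(\ell_j))[\chi_j^\e h]\|_{W^{s-2}_p}\le\eta(\e)\|h\|_{W^{s+1}_p}+C_\e\|h\|_{W^{s'+1}_p}
\end{align*}
with $\eta(\e)\to 0$ as $\e\to 0$, some $s'\in(1+1/p,s)$, and uniformly in $j$. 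The auxiliary $\vartheta(f)[h]$ is compared with $\vartheta(\ell_j)[h]$ by inverting $1+a_\mu\bA(f)^*$ through Proposition~\ref{Prop:2}.

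Summing the localized resolvent identities over $j$, using the norm equivalence from \cite[Lemma 9]{AM22}, absorbing the $\eta(\e)$-term on the left-hand side by choosing $\e$ small, and dominating the lower-order remainder via the interpolation property \eqref{IP} and Young's inequality then yields a uniform sectorial resolvent estimate for $\lambda+\Phi(f)$ on a sufficiently shifted sector in $\C$. This is equivalent to $-\Phi(f)\in\kH(W^{s+1}_p(\R),W^{s-2}_p(\R))$, as desired. The main obstacle I expect is the commutator bound: to make the remainder genuinely lower order one must exploit that $(f-\ell_j)$ vanishes to first order at $x_j^\e$, so that $\|(f-\ell_j)'\|_{L_\infty(\supp\chi_j^\e)}$ is $O(\e^{s-1-1/p})$, which supplies the smallness $\eta(\e)$ only after a careful accounting of the regularity of each factor of $f-\ell_j$ in the expanded $B_{n,m}$-expressions.
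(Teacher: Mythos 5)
Your plan---localize with an $\e$-partition, freeze the coefficients on each patch to a constant-coefficient Fourier multiplier with symbol $-c|\xi|^3$ ($c>0$), establish resolvent bounds for the frozen model, and patch---is essentially the route the paper takes (Proposition~\ref{Prop:3}, then the generation argument from \cite[Theorem~3.5]{MM21}). Your observation that $\bA(\ell_a)\equiv 0$, so that $\Phi(\ell_a)$ is an explicit multiplier with principal part $-\sigma b_\mu(1+a^2)^{-3/2}|D|^3$, is exactly what the paper exploits: its frozen model $\alpha_{\tau,j}\Phi^\pi(0)$, with $\alpha_{\tau,j}=(1+\tau^2 f'(x_j^\e)^2)^{-3/2}$ and $\Phi^\pi(0)=\sigma b_\mu H\,d^3/dx^3$, is your $\Phi(\ell_j)$ stripped of the lower-order $-\Theta b_\mu|D|$ term, which belongs in the error anyway.

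Two points in your sketch need tightening. First, the localization estimate must carry the \emph{localized} norm on the right and be phrased with $\pi_j^\e$ moved across the operator, i.e.\ $\|\pi_j^\e\Phi(f)[h]-\Phi_j[\pi_j^\e h]\|_{W^{s-2}_p}\le\nu\|\pi_j^\e h\|_{W^{s+1}_p}+K\|h\|_{W^{s'+1}_p}$, as in Proposition~\ref{Prop:3}, rather than $\|\chi_j^\e(\Phi(f)-\Phi(\ell_j))[\chi_j^\e h]\|_{W^{s-2}_p}\le\eta(\e)\|h\|_{W^{s+1}_p}+C_\e\|h\|_{W^{s'+1}_p}$. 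With $\|h\|_{W^{s+1}_p}$ on the right, summing over $j$ produces $N(\e)$ copies that the smallness $\eta(\e)$ cannot absorb; and the structure $\pi_j^\e\Phi(f)[h]-\Phi_j[\pi_j^\e h]$ is the one that turns invertibility of the frozen operator into information on $\pi_j^\e h$. Producing this form also requires a commutator estimate to move $\pi_j^\e$ past the singular integrals, which is exactly Lemma~\ref{L:AL1}. Second, a uniform a priori resolvent estimate gives injectivity and closed range, but not surjectivity of $\lambda-\Phi(f)$ on its own. The paper builds the homotopy parameter $\tau\in[0,1]$ into Proposition~\ref{Prop:3} precisely so that generation at $\tau=1$ can be reached from the explicit generator $\Phi(0)$ at $\tau=0$ by a continuity/perturbation argument; your closing sentence ``this is equivalent to'' glosses over that step and should be made explicit.
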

In the proof of Theorem~\ref{Thm:2} we are inspired by the strategy used in the references  \cite{E94,  ES95,  ES97}.
The main step in the proof of Theorem~\ref{Thm:2} is provided by the localization result established in Proposition~\ref{Prop:3} below.
Before proceeding with this result, we  note that the leading order part~$\Phi^\pi(0)$ of~$\Phi(0)$ is the linear operator
\begin{equation}\label{Phi0}
\Phi^\pi(0)= \sigma b_\mu\frac{d}{dx}H\frac{d^2}{dx^2}=\sigma b_\mu H\frac{d^3}{dx^3},
\end{equation}
where $H$ denotes the Hilbert transform. 
We point out that    $\Phi^\pi(0)$ is the Fourier multiplier with symbol $[\xi\mapsto -\sigma b_\mu |\xi|^3]$.

\begin{prop}\label{Prop:3} 
Let $p\in(1,\infty)$, $1+1/p<s'<s<2$, $f\in W^{s}_p(\R)$, and~$\nu>0$ be given. 
Then, there exist $\e\in(0,1)$, a $\e$-locali\-za\-tion family  $\{(\pi_j^\e,x_j^\e)\,:\, -N+1\leq j\leq N\} $, and  a constant~${K=K(\e)}$, 
 such that 
 \begin{equation}\label{D1}
  \|\pi_j^\e \Phi(\tau f) [h]-\alpha_{\tau_,j}\Phi^\pi(0)[\pi_j^\e h]\|_{W^{s-2}_p}\leq \nu \|\pi_j^\e h\|_{W^{s+1}_p}+K\|  h\|_{W^{ s'+1}_p}
 \end{equation}
 for all $ -N+1\leq j\leq N$, $\tau\in[0,1],$  and  $h\in W^{s+1}_p(\R)$,  where, letting $\alpha_\tau:=(1+\tau^2f'^2)^{-3/2}$, we defined
\[ 
\alpha_{\tau,N}:=\lim_{|x|\to\infty}\alpha_{\tau}(x)=1\qquad\text{and}\qquad \alpha_{\tau,j}:= \alpha_\tau(x_j^\e),\quad |j|\leq N-1.
\]
\end{prop}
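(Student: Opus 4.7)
The approach is to localize the non-local operators in $\Phi(\tau f)$ and to freeze the coefficient $\alpha_\tau$ on $\supp\pi_j^\e$. From \eqref{Phi} and \eqref{theta}, I would split
\[
\Phi(\tau f)[h] = -b_\mu\sigma\bigl(\bB(\tau f)^*[\vartheta_{\mathrm{pr}}]\bigr)' + b_\mu\Theta\bigl(\bB(\tau f)^*[\vartheta_{\mathrm{low}}]\bigr)',
\]
with $\vartheta_{\mathrm{pr}}:=(1+a_\mu\bA(\tau f)^*)^{-1}[\alpha_\tau h'']$ and $\vartheta_{\mathrm{low}}:=(1+a_\mu\bA(\tau f)^*)^{-1}[h]$. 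The second term gains two derivatives of regularity relative to $h$, so by Proposition~\ref{Prop:2} and Lemma~\ref{L:MP0}~(ii) it is uniformly bounded in $W^{s-2}_p(\R)$ by $C\|h\|_{W^{s'+1}_p}$ and feeds directly into the $K\|h\|_{W^{s'+1}_p}$ error.

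The algebraic observation that drives the rest of the argument is that for any affine function $\ell(x)=cx+d$ one has $\delta_{[x,y]}\ell/y\equiv c$, so \eqref{adjBNM} gives $\bA(\ell)^*=0$ and $\bB(\ell)^*=-H$, where $H$ is the Hilbert transform. Consequently, if in the first term above $\tau f$ were replaced by its affine tangent $\ell_j^\tau(x):=\tau f(x_j^\e)+\tau f'(x_j^\e)(x-x_j^\e)$ for $|j|\leq N-1$ (and $\ell_N^\tau\equiv 0$), the Neumann series would collapse to the identity and one would recover exactly
\[
-b_\mu\sigma\bigl(-H[\alpha_{\tau,j}(\pi_j^\e h)'']\bigr)' = \sigma b_\mu\alpha_{\tau,j}H[(\pi_j^\e h)'''] = \alpha_{\tau,j}\Phi^\pi(0)[\pi_j^\e h].
\]
The plan is therefore to (a)~pull the cutoff $\pi_j^\e$ inside the chain of operators, and (b)~replace $\tau f$ by $\ell_j^\tau$, controlling every resulting error.

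For step (b), the identity \eqref{rell} expresses each difference $B^0_{n,m}(\tau f)-B^0_{n,m}(\ell_j^\tau)$ as a finite sum of operators $B_{n+2,m+1}$ carrying the factor $\tau f-\ell_j^\tau$ in one slot. On $\supp\pi_j^\e$, the embedding $W^s_p(\R)\hookrightarrow{\rm BUC}^{1+(s-1-1/p)}(\R)$ together with the length-$\e$ support yields $\|(\tau f-\ell_j^\tau)'\|_{L_\infty(\supp\pi_j^\e)}\leq C\e^{s-1-1/p}$ for $|j|\leq N-1$, while for $j=N$ the decay $\|f\|_{W^1_\infty(\{|x|\geq 1/\e-\e\})}\to 0$ as $\e\to 0$ plays the same role. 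By Lemma~\ref{L:MP0}~(i) and~(iii), each resulting error operator is bounded on $L_p(\R)$ with norm $o_\e(1)$ uniformly in $\tau\in[0,1]$; an entirely analogous Taylor expansion replaces $\alpha_\tau$ by $\alpha_{\tau,j}$ on $\supp\pi_j^\e$ with error $O(\e^{s-1-1/p})$. For step (a), I would introduce the larger cutoff $\chi_j^\e$ and decompose every input to a singular integral as $\chi_j^\e(\cdot)+(1-\chi_j^\e)(\cdot)$; the far-diagonal piece integrates only over $|y|$ bounded below by a positive multiple of $\e$ and is smoothing, contributing at worst $K(\e)\|h\|_{W^{s'+1}_p}$, while the near-diagonal piece is handled by step (b). What then remains are commutators of $\pi_j^\e$ with $H$, which are classical Calder\'on-type commutators that gain one derivative and are therefore absorbed into $K(\e)\|h\|_{W^{s'+1}_p}$ via the interpolation property \eqref{IP}.

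The main obstacle is the bookkeeping: neither $\bA(\tau f)^*$ nor its inverse is local, so cutoffs have to be pushed through in a way that cleanly separates errors that become small in the top-order seminorm once $\e$ is taken small (matched against $\nu\|\pi_j^\e h\|_{W^{s+1}_p}$) from those merely bounded in a lower-order seminorm (fed into $K(\e)\|h\|_{W^{s'+1}_p}$). Once this is organized systematically along the lines of \cite{AM22}, uniformity in $\tau\in[0,1]$ is automatic since all bounds depend polynomially on $\|\tau f\|_{W^s_p}\leq\|f\|_{W^s_p}$.
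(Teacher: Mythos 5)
Your key algebraic observation — that $\bA(\ell)^*=0$ and $\bB(\ell)^*=-H$ for affine $\ell$, so freezing $f$ to its tangent should collapse the chain of operators to $\alpha_{\tau,j}\Phi^\pi(0)$ — is exactly the insight that the paper encodes in Lemmas~\ref{L:AL2} and~\ref{L:AL3}, and your decomposition of $\vartheta$ into a principal and a lower-order part, together with the H\"older-continuity estimate for $(\tau f-\ell_j^\tau)'$ on $\supp\pi_j^\e$, is sound. So the high-level strategy matches.

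The genuine gap is in what you yourself call the ``bookkeeping'': you propose to push $\pi_j^\e$ and the affine substitution $\tau f\rightsquigarrow\ell_j^\tau$ all the way through the nonlocal inverse $(1+a_\mu\bA(\tau f)^*)^{-1}$, but nothing in your outline explains how to control the error operator $(1+a_\mu\bA(\tau f)^*)^{-1}-(1+a_\mu\bA(\ell_j^\tau)^*)^{-1}$ after multiplication by $\pi_j^\e$ — this difference is not small in operator norm, since $\bA(\tau f)^*$ itself is merely $O(1)$, and a Neumann-series expansion is unavailable without a smallness bound on $\|a_\mu\bA(\tau f)^*\|$. The paper sidesteps this entirely: rather than localizing the inverse, it multiplies the \emph{equation} \eqref{theta} by $\pi_j^\e$, transfers $\pi_j^\e$ across $\bA(\tau f)^*$ via the commutator Lemma~\ref{L:AL1} (a lower-order error), and then invokes Proposition~\ref{Prop:2} to bound $\|\pi_j^\e\vartheta(\tau f)[h]\|_{W^{s-1}_p}$ by the right-hand side of~\eqref{x-1}. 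Step~3 repeats the same device: instead of inverting, it isolates $\pi_j^\e\vartheta - \sigma b_\mu\alpha_{\tau,j}(\pi_j^\e h)''$ from $\pi_j^\e\cdot$\eqref{theta} and estimates each term directly, using that $\pi_j^\e\bA(\tau f)^*$ has small norm (Lemmas~\ref{L:AL2}/\ref{L:AL3}, since the frozen coefficient vanishes for $\bA^*$) and that $\chi_j^\e(\alpha_\tau-\alpha_{\tau,j})$ is uniformly small. You should replace the ``substitute $\ell_j^\tau$ inside the inverse'' plan with this equation-based argument; without it, the proposal does not close.
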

\begin{proof} 
In the following    $C$ and $C_i$, $0\leq i\leq 2$, denote constants that do not depend on $\e$, while constants denoted by $K$ may depend on $\e$.

Setting $C_0:=\|d/dx\|_{\kL( W^{s-1}_p(\R), W^{s-2}_p(\R))} $, for given $-N+1\leq j\leq N$, $\tau\in[0,1],$  and~$h\in W^{s+1}_p(\R)$ we obtain,
 in view of \eqref{Phi} and of the representation~\eqref{Phi0} of~$\Phi^\pi(0)$, that 
\begin{equation}\label{ES0}
\begin{aligned}
\|\pi_j^\e \Phi(\tau f) [h]-\alpha_{\tau_,j}\Phi^\pi(0)[\pi_j^\e h]\|_{W^{s-2}_p}&=\|\pi_j^\e \Phi(\tau f) [h]-\sigma b_\mu\alpha_{\tau_,j}(H[(\pi_j^\e h)''])'\|_{W^{s-2}_p}\\[1ex]
&\leq \|(\pi_j^\e \bB(\tau f)^*[\vartheta(\tau f)[h]]+\sigma b_\mu\alpha_{\tau_,j}H[(\pi_j^\e h)''])'\|_{W^{s-2}_p}\\[1ex]
&\qquad+\|(\pi_j^\e)'\bB(\tau f)^*[\vartheta(\tau f)[h]]\|_{W^{s-2}_p}\\[1ex]
&\leq C_0 \|\pi_j^\e \bB(\tau f)^*[\vartheta(\tau f)[h]]+\sigma b_\mu\alpha_{\tau_,j}H[(\pi_j^\e h)'']\|_{W^{s-1}_p}\\[1ex]
& \qquad+ K\|  h\|_{W^{ s'+1}_p},
\end{aligned}
\end{equation}
since, by Lemma~\ref{L:MP0}~(i) and Theorem~\ref{Thm:1}, we have
\begin{align*}
\|(\pi_j^\e)'\bB(\tau f)^*[\vartheta(\tau f)[h]]\|_{W^{s-2}_p}&\leq K\| \bB(\tau f)^*[\vartheta(\tau f)[h]]\|_{p}\leq K\|\vartheta(\tau f)[h]\|_p\leq K\|h\|_{W^2_p}\leq K\|  h\|_{W^{ s'+1}_p}.
\end{align*}
It thus remains to estimate the expression $ \|\pi_j^\e \bB(\tau f)^*[\vartheta(\tau f)[h]]+\sigma b_\mu\alpha_{\tau_,j}H[(\pi_j^\e h)'']\|_{W^{s-1}_p}.$
 This is done in several steps.\medskip
 
  \noindent{Step 1.} We  prove there exists a positive constant $C_1$ such that 
 \begin{equation}\label{x-1}
 \|\pi_j^\e\vartheta(\tau f)[h]\|_{W^{s-1}_p}\leq C_1\|\pi_j^\e h\|_{W^{s+1}_p}+ K\|h\|_{W^{s'+1}_p}
 \end{equation}
 for all $\e\in(0,1)$, $\tau\in[0,1]$, $-N+1\leq j\leq N$, and $h\in  W^{s-1}_p(\R)$.
 To this end we multiply \eqref{theta} by~$\pi_j^\e$ to deduce that
 \begin{align*} 
(1+a_\mu\bA(\tau f)^*)[\pi_j^\e \vartheta(\tau f)[h]]&=b_\mu \pi_j^\e\big(\sigma\kappa(\tau f)[h]-\Theta h\big)\\[1ex]
&\quad+a_\mu\big(\bA(\tau f)^*[\pi_j^\e \vartheta(\tau f)[h]]-\pi_j^\e\bA(\tau f)^*[ \vartheta(\tau f)[h]]\big).
\end{align*}
The first term on the right is estimated as follows 
\[
\|\pi_j^\e\big(\sigma\kappa(\tau f)[h]-\Theta h\big)\|_{W^{s-1}_p}\leq C\|\pi_j^\e h\|_{W^{s+1}_p}+ K\|h\|_{W^{s'+1}_p}.
\]
Moreover, combining \eqref{adjBNM} and  the commutator estimate in Lemma~\ref{L:AL1},   the arguments used to derive~\eqref{ES0} yield 
\[
\|\bA(\tau f)^*[\pi_j^\e \vartheta(\tau f)[h]]-\pi_j^\e\bA(\tau f)^*[ \vartheta(\tau f)[h]]\|_{W^{s-1}_p}\leq K\|\vartheta(\tau f)[h]\|_p \leq  K\|  h\|_{W^{ s'+1}_p}.
\]
 Proposition~\ref{Prop:2} now ensures that \eqref{x-1} indeed holds true. 
 \medskip

 \noindent{Step 2.}  
 Taking advantage of Lemma~\ref{L:AL2} (if $|j|\leq N-1$) and Lemma~\ref{L:AL3} (if~$ j=N$), we infer from~\eqref{adjBNM} that for each sufficiently small $\e\in(0,1)$ we have  
 \begin{equation*}
 \|\pi_j^\e \bB(\tau f)^*[\vartheta(\tau f)[h]]+H[\pi^\e_j  \vartheta(\tau f)[h]]\|_{W^{s-1}_p}\leq \frac{\nu}{2C_0C_1}\|\pi^\e_j\vartheta(\tau f)[h]\|_{W^{s-1}_p}+K\|\vartheta(\tau f)[h]\|_{_{W^{s'-1}_p}}
\end{equation*}  
for all $\tau\in[0,1]$, $-N+1\leq j\leq N$, and $h\in  W^{s+1}_p(\R)$.
Using \eqref{x-1} and \eqref{Reg:theta} (with $s=s'$), we infer from the latter estimate that
 \begin{equation}\label{x-2}
 \|\pi_j^\e \bB(\tau f)^*[\vartheta(\tau f)[h]]+H[\pi^\e_j  \vartheta(\tau f)[h]]\|_{W^{s-1}_p}\leq \frac{\nu}{2 C_0}\|\pi^\e_j h\|_{W^{s+1}_p}+K\|h\|_{_{W^{s'+1}_p}}
\end{equation}  
for all $\tau\in[0,1]$, $-N+1\leq j\leq N$, and $h\in  W^{s+1}_p(\R)$, provided that $\e$ is  sufficiently small.\medskip

 \noindent{Step 3.} We first note that  $\alpha_\tau(f)=(1+\tau^2 f'^2)^{-3/2}\in{\rm BUC}^{s-1-1/p}(\R)$, $\tau\in[0,1]$, and~${{\alpha_\tau(f)(x)\to1}}$ for~$|x|\to\infty$ (uniformly with respect to $\tau\in[0,1]$).
 In this step we show that, for each sufficiently small~${\e\in(0,1),}$ we have 
 \begin{equation}\label{x-3}
\|H[\pi^\e_j  \vartheta(\tau f)[h]]-\sigma b_\mu\alpha_{\tau_,j}H[(\pi_j^\e h)'']\|_{W^{s-1}_p}\leq \frac{\nu}{2 C_0}\|\pi^\e_j h\|_{W^{s+1}_p}+K\|h\|_{_{W^{s'+1}_p}} 
\end{equation}  
for all $\tau\in[0,1]$, $-N+1\leq j\leq N$, and  $h\in  W^{s+1}_p(\R)$.

To proceed, we set   $C_2:=\|H\|_{\kL(W^{s-1}_p(\R))}$ and  obtain that
\[ 
 \|H[\pi^\e_j  \vartheta(\tau f)[h]]-\sigma b_\mu\alpha_{\tau_,j}H[(\pi_j^\e h)'']\|_{W^{s-1}_p}\leq C_2\|\pi^\e_j  \vartheta(\tau f)[h]-\sigma b_\mu\alpha_{\tau_,j}(\pi_j^\e h)''\|_{W^{s-1}_p}. 
 \]
In order to estimate the right side of the latter inequality, we infer from \eqref{theta} that 
\begin{equation*}
\pi^\e_j\vartheta(\tau f)[h]-\sigma b_\mu\alpha_{\tau_,j}(\pi_j^\e h)''=-a_\mu\pi^\e_j\bA(\tau f)^*[\vartheta(\tau f)[h]]
+b_\mu \big[\pi^\e_j\big(\sigma\kappa(\tau f)[h]-\Theta h\big)-\sigma \alpha_{\tau_,j}(\pi_j^\e h)''\big].
\end{equation*} 
 Using Lemma~\ref{L:AL2} (if $|j|\leq N-1$) and Lemma~\ref{L:AL3} (if $ j=N$) together with the representation \eqref{adjBNM} of $\bA(\tau f)^*$, we have 
 \[
 \|\pi^\e_j\bA(\tau f)^*[\vartheta(\tau f)[h]]\|_{W^{s-1}_p}\leq \frac{\nu}{4(1+|a_\mu|)  C_0C_1C_2}\|\pi^\e_j\vartheta(\tau f)[h]\|_{W^{s-1}_p}+K\|\vartheta(\tau f)[h]\|_{_{W^{s'-1}_p}}
 \]
 and, arguing as in the derivation of \eqref{x-2}, we conclude that 
 \[
 \|\pi^\e_j\bA(\tau f)^*[\vartheta(\tau f)[h]]\|_{W^{s-1}_p}\leq \frac{\nu}{4(1+|a_\mu|) C_0 C_2}\|\pi^\e_j h\|_{W^{s+1}_p}+K\|h\|_{_{W^{s'+1}_p}}
 \]
 for all $\tau\in[0,1]$, $-N+1\leq j\leq N$, and $h\in  W^{s+1}_p(\R)$, provided that $\e$ is sufficiently small.
 Moreover, for all sufficiently small $\e\in(0,1),$ we obtain in view of  $\chi_j^\e\pi_j^\e=\pi_j^\e$, of the estimate \eqref{MES}, and of the H\"older continuity of $\alpha_\tau$ that 
 \begin{align*}
\|\pi^\e_j\big(\sigma\kappa(\tau f)[h]-\Theta h\big)-\sigma  \alpha_{\tau_,j} (\pi_j^\e h)'' \|_{W^{s-1}_p}&\leq \sigma\|(\alpha_\tau-\alpha_\tau(x_j^\e))(\pi_j^\e h)''\|_{W^{s-1}_p}+K\|h\|_{W^{s'+1}_p} \\[1ex]
 &\leq 2\sigma\|\chi_j^\e(\alpha_\tau-\alpha_\tau(x_j^\e))\|_\infty\|\pi_j^\e h\|_{W^{s+1}_p}+K\|h\|_{W^{s'+1}_p} \\[1ex]
 &\leq \frac{\nu}{4b_\mu C_0 C_2} \|\pi_j^\e h\|_{W^{s+1}_p}+K\|h\|_{W^{s'+1}_p} 
 \end{align*}
  for all $\tau\in[0,1]$, $|j|\leq N-1$, and $h\in  W^{s+1}_p(\R)$.
Since $\alpha_\tau-1$ vanishes at infinity, for $j=N$ we similarly have
  \begin{align*}
\|\pi^\e_N\big(\sigma\kappa(\tau f)[h]-\Theta h\big)-\sigma  \alpha_{\tau_,N}(\pi_N^\e h)''\|_{W^{s-1}_p} &\leq \sigma\|(\alpha_\tau-1)(\pi_N^\e h)''\|_{W^{s-1}_p}+K\|h\|_{W^{s'+1}_p} \\[1ex]
 &\leq 2\sigma\|\chi_N^\e(\alpha_\tau-1)\|_\infty\|\pi_N^\e h\|_{W^{s+1}_p}+K\|h\|_{W^{s'+1}_p} \\[1ex]
 &\leq \frac{\nu}{4b_\mu C_0 C_2} \|\pi_N^\e h\|_{W^{s+1}_p}+K\|h\|_{W^{s'+1}_p}. 
 \end{align*}
 Gathering all these estimates, we conclude that   \eqref{x-3} is valid.\medskip
 Combining  the estimates \eqref{ES0}, \eqref{x-2}, and \eqref{x-3}, we arrive at \eqref{D1} and the proof is complete.
\end{proof}

We are now in a position to establish Theorem~\ref{Thm:2}.
The proof relies deeply on Proposition~\ref{Prop:3}, the arguments being  identical to those in the proof \cite[Theorem~3.5]{MM21}.

\begin{proof}[Proof of Theorem~\ref{Thm:2}] 
Since $f'$ is a bounded  function, there exists a constant~$\kappa_0\geq 1$ such that the Fourier multipliers $\alpha_{\tau_,j}\Phi^\pi(0)$, $\tau\in[0,1]$ and $-N+1\leq j\leq N$, identified in Proposition~\ref{Prop:2} satisfy
 \begin{align}
\bullet &\,\, \text{$\lambda-\alpha_{\tau_,j}\Phi^\pi(0)\in {\rm Isom}(W^{s+1}_p(\R),W^{s-2}_p(\R))$   for all  $\re\lambda\geq 1$},\label{L:FM1}\\[1ex]
\bullet &\,\,  \kappa_0\|(\lambda-\alpha_{\tau_,j}\Phi^\pi(0))[h]\|_{W^{s-2}_p}\geq |\lambda|\cdot\|h\|_{W^{s-2}_p}+\|h\|_{W^{s+1}_p} \quad  \text{for all  $h\in W^{s+1}_p(\R)$,  $\re\lambda\geq 1$.}\label{L:FM2}
\end{align}
The properties \eqref{L:FM1}-\eqref{L:FM2} together with the estimate~\eqref{D1} in Proposition~\ref{Prop:3} lead now to the desired result, see \cite[Theorem~3.5]{MM21} for full details.
\end{proof}

We conclude this section with the proof of our main result which uses to a large extent the abstract quasilinear parabolic theory presented in \cite{Am93} (see also \cite[Theorem 1.1]{MW20}).
\begin{proof}[Proof of Theorem~\ref{MT1}]
To start, let 
\[
0< \beta:=\frac{2}{3}<\alpha:=\frac{s-\ov s+2}{3}<1. 
\]
We further define $E_1:=W^{\ov s+1}_p(\R)$, $E_0:=W^{\ov s-2}_p(\R)$, and $E_\eta:=(E_0,E_1)_{\eta,p} $ for $\eta\in(0,1)$.
Recalling the interpolation property \eqref{IP}, we have $E_\alpha=W^{s}_p(\R)$ and $E_\beta=W^{\ov s}_p(\R)$.
We now infer from~\eqref{Reg:Phi} and Theorem~\ref{Thm:2} (both with~${s=\ov s}$), that $-\Phi \in {\rm C}^{\infty}(E_\beta,\kH(E_1, E_0))$.
 Hence, the assumptions of \cite[Theorem 1.1]{MW20} are satisfied in the context of the Muskat problem~\eqref{NNEP}.
Consequently, given~${f_0\in W^s_p(\R)}$, there exists a unique maximal classical solution  $f= f(\,\cdot\, ; f_0)$ to \eqref{NNEP} such that
 \begin{equation*}  
 f\in {\rm C}([0,T^+),W^s_p(\mathbb{R}))\cap {\rm C}((0,T^+), W^{{\ov s}+1}_p(\mathbb{R}))\cap {\rm C}^1((0,T^+), W^{{\ov s}-2}_p(\mathbb{R})) 
  \end{equation*}
   and  
  \[
f\in   {\rm C}^{\zeta}([0,T^+), W^{\ov s}_p(\mathbb{R}),
  \]
  where $T^+=T^+(f_0)\in(0,\infty]$ is the maximal existence time and the H\"older exponent $\zeta\in(0,\alpha-\beta]$  can be chosen arbitrary small, cf. \cite[Remark 1.2~(ii)]{MW20}.
  Moreover, the mapping $[(t,f_0)\mapsto f(t;f_0)]$ defines a semiflow on $W^s_p(\R)$ which is smooth in the open set
  \[
\{(t,f_0)\,:\, f_0\in W^s_p(\R),\, 0<t<T^+(f_0)\}\subset \R\times W^s_p(\R).  
  \]
  
  In remains to establish the parabolic smoothing property~\eqref{eq:fg}.
 To this end we note that  we may  chose above $E_0:= L_p(\mathbb{R})$ and  $E_1:=W^{3}_p(\mathbb{R})$.
 Similarly as in the case~${p=2}$ considered in \cite{MBV18},  using Lemma~\ref{L:MP1} and Theorem~\ref{Thm:1}, we may deduce that~${-\Phi\in {\rm C}^\infty(W^2_p(\R), \kH(W^3_p(\R),L_p(\R))).}$
Applying the quasilinear parabolic theory from~\cite{Am93} in this context and  arguing as  in the proof of \cite[Theorem~1.1]{MBV18},  we obtain that, given~${f_0\in W^{\ov s+1}_p(\R)}$, 
there exists a unique solution $\wt f=\wt f(\cdot; f_0)$ to \eqref{NNEP} that satisfies
\begin{equation}\label{eq:gf}
 \wt f\in {\rm C}([0,\widetilde T^+),W^{ \ov s+1}_p(\mathbb{R}))\cap {\rm C}((0,\widetilde T^+), W^{3}_p(\mathbb{R}))\cap {\rm C}^1((0,\widetilde T^+),L_p(\mathbb{R})),
  \end{equation}
  with $\wt T^+=\wt T^+(f_0)\in(0,\infty]$ denoting the maximal existence time.
  Additionally,~${[(t,f_0)\mapsto \wt f(t;f_0)]}$ defines a semiflow on~${W^{\ov s+1}_p(\R)}$ and 
  \begin{equation*} 
 \wt f\in {\rm C}^\infty((0,\wt T^+ )\times\R,\R)\cap {\rm C}^\infty ((0,\wt T^+),  W^k_p(\R))\quad \text{for all $k\in\N$}.
  \end{equation*}
 Recalling \eqref{eq:gf}, Lemma~\ref{L:MP0}~(i) and Theorem~\ref{Thm:1}  imply that $d\wt f/dt\in {\rm C}([0,\wt T^+),W_p^{-1}(\R)).$
 This property, \eqref{eq:gf}, and  the mean value theorem  lead now to $\wt f\in   {\rm C}^{\zeta}([0,\wt T^+), W^{\ov s}_p(\mathbb{R}))$ for some sufficiently small~$\zeta \in (0,\alpha-\beta]$.
 Consequently,~$\wt T^+\leq T^+ $ and  $\wt f(\cdot; f_0)=  f(\cdot; f_0)$ on $[0, \wt T^+)$ for all~${f_0\in W^{\ov s+1}_p(\R)}$.
 
 It actually holds $\wt T^+=T^+. $ Indeed, let us assume  that $\wt T^+<T^+$. 
 The properties established above enable us to conclude that  $\wt f\in   {\rm C}^{\zeta'}([0,\wt T^+), W^{2}_p(\mathbb{R}))$ for some sufficiently small~$\zeta' \in (0,1)$.
 Since $f\in {\rm C}((0,T^+), W^{{\ov s}+1}_p(\mathbb{R}))$, we may infer from \cite[Proposition~2.1]{MW20} (after possibly choosing a smaller $\xi'$),
 that there exist positive constants $\varepsilon>0$ and $\delta>0$ such that 
 for all $|t_0-\wt T^+|\leq \varepsilon,$ the problem
 \[
\frac{df}{dt}(t)=\Phi(f(t))[f(t)],\quad t>0,\qquad f(0)=f(t_0),
 \]
 has a unique solution
 \begin{equation*}
 \widehat f\in {\rm C}([0,\delta],W^{\ov s+1}_p(\mathbb{R}))\cap {\rm C}((0,\delta], W^{3}_p(\mathbb{R}))\cap {\rm C}^1((0,\delta],L_p(\mathbb{R}))\cap{\rm C}^{\zeta'}([0,\delta], W^{2}_p(\mathbb{R})).
  \end{equation*} 
  Hence, choosing $t_0<\wt T^+$ such that $t_0+\delta>\wt T^+$, we may extend $\wt f$ to the interval $[0,t_0+\delta)$, which contradicts the maximality property of $\wt f$, hence $\wt T^+=T^+$.
 This provides the regularity property~\eqref{eq:fg}  and the proof is complete.
\end{proof}

\appendix
\section{Localization of the singular integral operators $B_{n,m}^0(f)$}\label{Appendix A}

In this section we collect some results that enable us to localize the singular integrals operators~$B_{n,m}^0(f)$ introduced in~\eqref{defB0}.  
Lemma~\ref{L:AL2} and Lemma~\ref{L:AL3} can be viewed as generalizations of the method of freezing the coefficients of elliptic differential operators, and, together with the commutator estimate from Lemma~\ref{L:AL1},
 are essential in the proof of Proposition~\ref{Prop:3}.
 Before proceeding, we recall that $H$ denotes the Hilbert transform.
 
\begin{lemma}\label{L:AL1} 
Let $n,\, m \in \N$,  $p\in(1, \infty)$, $s\in(1+1/p,2)$, $f\in W^s_p(\R)$, and  ${\varphi\in {\rm BUC}^1(\R)}$ be given. 
Then, there exists  a constant $K$ that  depends only on~$ n,$~$m, $~$\|\varphi'\|_\infty, $ and~$\|f\|_{W^s_p}$  such that 
 \begin{equation*} 
  \|\varphi B_{n,m}^0(f)[\vartheta]- B_{n,m}^0(f)[ \varphi \vartheta]\|_{W^1_p}\leq K\| \vartheta\|_{p}
 \end{equation*}
for all   $\vartheta\in L_p(\R)$.
\end{lemma}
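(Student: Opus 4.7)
The plan is to exploit the commutator structure to cancel the principal value singularity, differentiate along the diagonal, and identify each resulting piece as a specific $B_{n',m'}$-operator controlled by Lemma~\ref{L:MP0}~(i). Write $C[\vartheta]:=\varphi B_{n,m}^0(f)[\vartheta]-B_{n,m}^0(f)[\varphi\vartheta]$. A direct rearrangement cancels the PV and gives
\[
C[\vartheta](x) = \frac{1}{\pi}\int_\R \vartheta(x-y)\, \frac{\delta_{[x,y]}\varphi}{y}\, J(x,y)\, dy,\qquad J(x,y):=\frac{\prod_{i=1}^n(\delta_{[x,y]}f/y)}{\prod_{j=1}^m[1+(\delta_{[x,y]}f/y)^2]},
\]
because $\delta_{[x,y]}\varphi=O(|y|)$ absorbs the singular $1/y$. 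The $L_p$-bound $\|C[\vartheta]\|_p\le K\|\vartheta\|_p$ is then immediate from the triangle inequality together with Lemma~\ref{L:MP0}~(i).

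For the $L_p$-bound on $(C[\vartheta])'$ I first assume $f,\vartheta\in C_c^\infty(\R)$ (the general case will follow by density at the end). After the substitution $z=x-y$, differentiation under the integral sign gives
\[
(C[\vartheta])'(x)=\frac{1}{\pi}\int_\R \vartheta(x-y)\,(K_x+K_y)(x,y)\, dy,\qquad K(x,y):=\frac{\delta_{[x,y]}\varphi}{y}\,J(x,y),
\]
and the diagonal derivative is computed using the elementary identities
\[
(\partial_x+\partial_y)\frac{\delta_{[x,y]}\varphi}{y}=\frac{\varphi'(x)-\delta_{[x,y]}\varphi/y}{y},\qquad (\partial_x+\partial_y)\frac{\delta_{[x,y]}f}{y}=\frac{f'(x)-\delta_{[x,y]}f/y}{y},
\]
combined with the chain rule applied to $J=\beta^n(1+\beta^2)^{-m}$ with $\beta=\delta_{[x,y]}f/y$. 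The outcome is that $K_x+K_y$ is a finite linear combination of terms, each of the form $(1/y)$ times a polynomial in $\delta_{[x,y]}f/y$ and $\delta_{[x,y]}\varphi/y$ divided by a power of $1+(\delta_{[x,y]}f/y)^2$, possibly multiplied by the scalar $\varphi'(x)$ or $f'(x)$. Integrated against $\vartheta(x-y)$, each such piece is precisely a $B_{n',m'}(f,\ldots,f)[f,\ldots,f,\varphi,\vartheta]$-operator (with a single $\varphi$ inserted in one $b$-slot) multiplied by $\varphi'(x)$ or $f'(x)$.

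Lemma~\ref{L:MP0}~(i) then bounds each such $B$-operator in $\kL(L_p(\R))$ by $C\|\varphi'\|_\infty\|f'\|_\infty^{n'-1}$, and since $\|f'\|_\infty\le C\|f\|_{W^s_p}$ via the Sobolev embedding $W^s_p(\R)\hookrightarrow W^1_\infty(\R)$ (valid for $s>1+1/p$), one obtains $\|(C[\vartheta])'\|_p\le K\|\vartheta\|_p$ with the required $K$. The estimate is transferred to general $f\in W^s_p(\R)$, $\vartheta\in L_p(\R)$ by approximation: continuity of $B_{n,m}$ in all of its arguments (Lemma~\ref{L:MP0}) ensures $C_k[\vartheta_k]\to C[\vartheta]$ in $L_p$ for smooth approximants $f_k\to f$ in $W^s_p$ and $\vartheta_k\to\vartheta$ in $L_p$, while the uniform $W^1_p$-bound combined with reflexivity of $W^1_p(\R)$ identifies the weak $W^1_p$-limit with $C[\vartheta]$ and transfers the estimate. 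The main obstacle is the algebraic expansion of $K_x+K_y$ and the careful matching of each resulting piece with a specific $B_{n',m'}$-operator: the calculation itself is elementary but requires some bookkeeping of the indices (in particular the number of $f$'s versus the single $\varphi$ in the $b$-slot) and of the $a$-slot and $b$-slot counts.
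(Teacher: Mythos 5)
The paper itself gives no proof of Lemma~\ref{L:AL1}; it is a citation to \cite[Lemma~12]{AM22}, so a line-by-line comparison is not possible. That said, your approach --- cancel the PV in the commutator to get $\varphi B_{n,m}^0(f)[\vartheta]- B_{n,m}^0(f)[\varphi\vartheta]=B_{n+1,m}(f,\ldots,f)[f,\ldots,f,\varphi,\vartheta]$, then take the ``diagonal'' derivative $\partial_x+\partial_y$ of the kernel, and recognize each resulting piece as a $B_{n',m'}$-operator (possibly times $f'(x)$ or $\varphi'(x)$) controlled by Lemma~\ref{L:MP0}~(i) --- is exactly the mechanism behind this family of commutator estimates, and your algebraic identification of the pieces and the final bounds are correct. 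The $L_p$-bound on $C[\vartheta]$ itself is immediate in the way you say, and the Sobolev embedding $W^s_p(\R)\hookrightarrow W^1_\infty(\R)$ is correctly invoked to express the constant in terms of $\|f\|_{W^s_p}$.

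There is, however, a genuine gap in the differentiation step for $\varphi\in{\rm BUC}^1(\R)$. You assume $f,\vartheta\in C^\infty_c(\R)$ but keep $\varphi$ merely ${\rm BUC}^1$, and then differentiate under the integral sign. The piece of $(\partial_x+\partial_y)K$ coming from the $\varphi$-factor behaves, after the partial cancellation you exploit, like $\dfrac{\varphi'(x)-\delta_{[x,y]}\varphi/y}{y}J(x,y)$, and $|\varphi'(x)-\delta_{[x,y]}\varphi/y|$ is only bounded by the modulus of continuity $\omega(\varphi',|y|)$. For a general ${\rm BUC}^1$ function $\varphi'$ need not be Dini-continuous, so $\omega(\varphi',|y|)/|y|$ is not integrable near $y=0$; the differentiated integrand is therefore not absolutely integrable, and the interchange of $\partial_x$ with the $dz$-integral is not justified by dominated convergence. (The pieces coming from $f$ are fine, since $f'\in C^{0,s-1-1/p}$.) Your final formula is still correct when each term is read as a principal-value integral, i.e.\ as a $B$-operator, but that reading is not automatic from ``differentiate under the integral sign''. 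A clean repair is to mollify $\varphi$ as well: with $\varphi_\delta:=\varphi*\rho_\delta$ one has $\varphi_\delta\to\varphi$ in $W^1_\infty(\R)$ and $\|\varphi_\delta'\|_\infty\le\|\varphi'\|_\infty$, the differentiation under the integral sign is valid (now $\varphi_\delta'\in C^{0,1}$ and the singularity is $O(|y|^0)$), the resulting bound is uniform in $\delta$, and the Lipschitz continuity of $B_{n,m}$ in its $W^1_\infty$-arguments from Lemma~\ref{L:MP0}~(i) lets you pass to the limit exactly as in your reflexivity argument. Alternatively one can work with the $\e$-truncated integrals, differentiate, and check that the boundary terms vanish as $\e\to0$; either way, this point needs to be addressed.
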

\begin{proof}
See \cite[Lemma 12]{AM22}.
\end{proof}

The next lemmas describe how to localize the operators $B_{n,m}^0(f)$.
\begin{lemma}\label{L:AL2} 
Let $n,\, m \in \N$, $1+1/p<s'<s<2$, and  $\nu\in(0,\infty)$ be given. 
Let further~${f\in W^s_p(\mathbb{R})}$ and   $a \in \{1\}\cup W^{s-1}_p(\mathbb{R})$.
For any sufficiently small $\e\in(0,1)$, there exists
a constant $K$ that depends only on $\e,\, n,\, m,\, \|f\|_{W^s_p},$  and  $\|a\|_{W^{s-1}_p}$ (if $a\neq1$)   such that 
 \begin{equation*} 
  \Big\|\pi_j^\e  B_{n,m}^0(f)[ a\vartheta]-\frac{a(x_j^\e) (f'(x_j^\e))^n}{[1+(f'(x_j^\e))^2]^m}H[\pi_j^\e \vartheta]\Big\|_{W^{s-1}_p}\leq \nu \|\pi_j^\e  \vartheta\|_{W^{s-1}_p}+K\| \vartheta\|_{W^{s'-1}_p} 
 \end{equation*}
for all $|j|\leq N-1$ and  $\vartheta\in W^{s-1}_p(\mathbb{R})$.
\end{lemma}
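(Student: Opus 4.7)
\medskip

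\noindent\textbf{Proof plan.}
The idea is to isolate the three obstructions to $\pi_j^\e B_{n,m}^0(f)[a\vartheta]$ being a constant multiple of $H[\pi_j^\e\vartheta]$: namely, $\pi_j^\e$ sits outside the operator, the factor $a$ is not constant, and $f$ is not affine. Setting $c_j:=(f'(x_j^\e))^n/[1+(f'(x_j^\e))^2]^m$ so that $\alpha_j:=a(x_j^\e)c_j$ is the coefficient on the right-hand side, a telescoping yields
\begin{align*}
\pi_j^\e B_{n,m}^0(f)[a\vartheta]-\alpha_j H[\pi_j^\e\vartheta]
&=\bigl[\pi_j^\e,B_{n,m}^0(f)\bigr][a\vartheta]+B_{n,m}^0(f)\bigl[(a-a(x_j^\e))\pi_j^\e\vartheta\bigr]\\
&\quad+a(x_j^\e)\bigl(B_{n,m}^0(f)-c_j H\bigr)[\pi_j^\e\vartheta]=:T_1+T_2+T_3,
\end{align*}
where $T_2=0$ in the case $a=1$.

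The first two summands are routine. Lemma~\ref{L:AL1} gives $\|T_1\|_{W^1_p}\leq K\|a\vartheta\|_p\leq K\|\vartheta\|_p$, and since $W^1_p\hookrightarrow W^{s-1}_p$ (as $s<2$) and $W^{s'-1}_p\hookrightarrow L_p$ (as $s'>1$), this fits into the $K\|\vartheta\|_{W^{s'-1}_p}$ contribution. For $T_2$ I apply Lemma~\ref{L:MP0}(ii) together with $\chi_j^\e\pi_j^\e=\pi_j^\e$ to obtain $\|T_2\|_{W^{s-1}_p}\leq C\|\chi_j^\e(a-a(x_j^\e))\cdot\pi_j^\e\vartheta\|_{W^{s-1}_p}$, and then use \eqref{MES} together with the H\"older bound $\|\chi_j^\e(a-a(x_j^\e))\|_\infty\leq C\e^{s-1-1/p}$ coming from $W^{s-1}_p\hookrightarrow {\rm BUC}^{s-1-1/p}$; the $\|\pi_j^\e\vartheta\|_\infty$ factor in \eqref{MES} is tamed by $W^{s'-1}_p\hookrightarrow {\rm BUC}$. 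For $\e$ small enough the dominant piece is $\leq\nu\|\pi_j^\e\vartheta\|_{W^{s-1}_p}$, while the remainder is absorbed into $K\|\vartheta\|_{W^{s'-1}_p}$.

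The heart of the proof is $T_3$. The key algebraic observation is that for the affine tangent $f_j(x):=f(x_j^\e)+f'(x_j^\e)(x-x_j^\e)$ one has $(\delta_{[x,y]}f_j)/y\equiv f'(x_j^\e)$, so the formulas \eqref{BNM} and \eqref{defB0} yield the identity $c_j H=B_{n,m}^0(f_j)$. Thus $T_3=a(x_j^\e)(B_{n,m}^0(f)-B_{n,m}^0(f_j))[\pi_j^\e\vartheta]$, and a multilinear expansion of this difference in the spirit of \eqref{rell} writes it as a finite sum of operators $B_{n',m'}$ each of whose slots carries $f-f_j$. The gain comes from $f\in W^s_p\hookrightarrow {\rm BUC}^{s-1/p}$, which yields the pointwise smallness $|(f-f_j)'(x)|=|f'(x)-f'(x_j^\e)|\leq C\e^{s-1-1/p}$ uniformly for $x\in\supp\chi_j^\e$.

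The main obstacle is that $f-f_j$ is only locally small: globally it grows linearly and does not lie in $W^s_p$, so Lemma~\ref{L:MP0}(ii) cannot be applied directly to the multilinear expansion. To sidestep this I would exploit that $\pi_j^\e\vartheta$ is supported in $\supp\pi_j^\e\subset\supp\chi_j^\e$, so only values $\delta_{[x,y]}(f-f_j)$ with $x-y$ near $x_j^\e$ actually enter the integrals. Inserting additional $\chi_j^\e$-cutoffs then replaces $f-f_j$ by the compactly supported localization $\chi_j^\e(f-f_j)\in W^s_p$ in the portion of the integrand for which $x$ also lies in $\supp\chi_j^\e$; Lemma~\ref{L:MP0}(ii), the $O(\e^{s-1-1/p})$ pointwise bound, and \eqref{MES} then absorb this contribution into $\nu\|\pi_j^\e\vartheta\|_{W^{s-1}_p}$. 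The remaining portion, where $x$ lies outside $\supp\chi_j^\e$, involves a kernel that is no longer singular in $y$ (since $x-y\in\supp\pi_j^\e$ forces $|y|$ comparable to $|x-x_j^\e|$), so direct off-diagonal $L_p$-estimates in the spirit of those in the proof of Lemma~\ref{L:comp} produce only lower-order norms of $\vartheta$ absorbable into $K\|\vartheta\|_{W^{s'-1}_p}$. Summing the three contributions and choosing $\e$ sufficiently small yields the stated inequality.
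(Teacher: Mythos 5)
The skeleton of your argument — telescoping into $T_1+T_2+T_3$ and the observation that $B_{n,m}^0(f_j)=c_jH$ for the affine tangent $f_j(x)=f(x_j^\e)+f'(x_j^\e)(x-x_j^\e)$ — is sound, and the treatments of $T_1$ (via Lemma~\ref{L:AL1} and the embeddings $W^1_p\hookrightarrow W^{s-1}_p$, $W^{s'-1}_p\hookrightarrow L_p$) and of $T_2$ (via Lemma~\ref{L:MP0}(ii), the identity $\chi_j^\e\pi_j^\e=\pi_j^\e$, \eqref{MES}, and the H\"older bound $\|\chi_j^\e(a-a(x_j^\e))\|_\infty\leq C\e^{s-1-1/p}$) are essentially correct. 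Note that the paper itself offers no direct proof of this lemma; it merely cites \cite[Lemma~13]{AM22} (for $a=1$) and \cite[Lemma~D.5]{MP2021} (for general $a$), so I cannot compare your route to the paper's line by line, but it is consistent with the general localization strategy used there.

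There is, however, a genuine gap in the crucial $T_3$ step. You want to conclude smallness of $\|(B_{n,m}^0(f)-B_{n,m}^0(f_j))[\pi_j^\e\vartheta]\|_{W^{s-1}_p}$ from the pointwise bound $\|(\chi_j^\e(f-f_j))'\|_\infty\leq C\e^{s-1-1/p}$, invoking ``Lemma~\ref{L:MP0}(ii) \ldots and \eqref{MES}''. But Lemma~\ref{L:MP0}(ii) delivers the factor $\|(\chi_j^\e(f-f_j))'\|_{W^{s-1}_p}$, which does \emph{not} vanish as $\e\to0$; the smallness of the sup-norm of the derivative is lost. Parts (i) and (iii) of Lemma~\ref{L:MP0} do give the desired gain, but only in the $L_p$-norm, so the $W^{s-1}_p$-\emph{seminorm} of $T_3$ remains uncontrolled by what you quote. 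Closing this step requires either a hybrid estimate of the form $\|B_{n,m}(\ldots)[b_1,\ldots,b_n,\vartheta]\|_{W^{s-1}_p}\leq C\|b_1'\|_\infty\|\vartheta\|_{W^{s-1}_p}\prod_{i\geq2}\|b_i'\|_{W^{s-1}_p}+\text{(lower order)}$, which the present paper does not record, or a direct computation of the seminorm through \eqref{normwsp} combined with \eqref{rell} and Lemma~\ref{L:MP0}(iii) applied to translated differences. A second, related gap is the off-diagonal region $x\notin\supp\chi_j^\e$: you appeal to ``direct off-diagonal $L_p$-estimates in the spirit of Lemma~\ref{L:comp}'', but that lemma produces only $L_p$-control, whereas the statement requires $W^{s-1}_p$. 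The plan is in the right spirit and the identity $B_{n,m}^0(f_j)=c_jH$ is the key insight, but both halves of the $T_3$ estimate need $W^{s-1}_p$-level arguments that are not supplied.
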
  
\begin{proof}
If $a=1$, see \cite[Lemma~13]{AM22}. If $a\in W^{s-1}_p(\R)$, this result follows by arguing as in the proof of \cite[Lemma D.5]{MP2021} (where the result in the case $p=2$ is established).
\end{proof}

 Lemma~\ref{L:AL3}  below describes how to localize the operators $B^0_{n,m}(f)$ at infinity.

\begin{lemma}\label{L:AL3} 
Let $n,\, m \in \N$,  $1+1/p<s'<s<2$, and  $\nu\in(0,\infty)$ be given. 
Let further~${f\in W^s_p(\mathbb{R})}$    and~${a\in\{1\}\cup  W^{s-1}_p(\mathbb{R})}$.
For any sufficiently small  $\e\in(0,1)$, there exists a constant~$K$ that depends only on~$\e,\, n,\, m,\, \|f\|_{W^s_p},$   and $\|a\|_{W^{s-1}_p}$ (if $a\neq1$)  such that 
  \begin{equation*}
  \|\pi_N^\e  B_{n,m}^0(f)[a \vartheta]\|_{W^{s-1}_p}\leq \nu \|\pi_N^\e \vartheta\|_{W^{s-1}_p}+K\| \vartheta\|_{W^{s'-1}_p}\qquad \text{if $n\geq 1$ or $a\in W^{s-1}_p(\R)$},
 \end{equation*} 
  and
  \begin{equation*}
  \|\pi_N^\e  B_{0,m}^0(f)[\vartheta]-H[\pi_N^\e \vartheta]\|_{W^{s-1}_p}\leq \nu \|\pi_N^\e \vartheta\|_{W^{s-1}_p}+K\| \vartheta\|_{W^{s'-1}_p}
 \end{equation*} 
 for all $\vartheta\in W^{s-1}_p(\mathbb{R})$.
\end{lemma}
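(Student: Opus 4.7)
The plan is to mimic the strategy used to prove Lemma~\ref{L:AL2}, with the ``frozen'' values at a finite point $x_j^\e$ replaced by their limits at infinity. Since $s>1+1/p$, the embedding $W^s_p(\R)\hookrightarrow {\rm BUC}^1(\R)$ together with $f,f'\in L_p(\R)$ forces $f(x), f'(x)\to 0$ as $|x|\to\infty$; likewise $a(x)\to 0$ whenever $a\in W^{s-1}_p(\R)$. Hence, given any $\delta>0$, one can arrange $\|f\|_{W^1_\infty(\{|x|\geq 1/\e-1\})}+\|a\|_{L_\infty(\{|x|\geq 1/\e-1\})}<\delta$ for all sufficiently small $\e\in(0,1)$ (dropping the $a$-term if $a=1$). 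The ``frozen'' value of $a(x)(f'(x))^n[1+(f'(x))^2]^{-m}$ at infinity is $0$ precisely when $n\geq 1$ or $a\in W^{s-1}_p(\R)$, and is $1$ when $n=0$ and $a=1$; this explains why the limiting operator is $0$ in the first estimate and $H$ in the second.

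I would introduce a smooth cutoff $\eta_\e\in{\rm BUC}^\infty(\R,[0,1])$ with $\eta_\e\equiv 1$ on $\supp\pi_N^\e$ and $\supp\eta_\e\subset\{|x|\geq 1/\e-2\e\}$, and decompose $f=f_1+f_2$ with $f_1:=(1-\eta_\e)f$ and $f_2:=\eta_\e f$; analogously $a=a_1+a_2$ when $a\in W^{s-1}_p(\R)$. Using multilinearity of $B_{n,m}$ in the $b$-slots combined with the algebraic identity \eqref{rell} in the $a$-slots, $B_{n,m}^0(f)[a\vartheta]$ expands into a finite sum of $B_{n',m'}$-operators whose $a$- and $b$-slots are each filled by one of $\{f_1,f_2\}$ and whose density is either $a_1\vartheta$ or $a_2\vartheta$. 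The terms in this sum split into two categories. \emph{Tail terms}, carrying at least one $f_2$ in a $b$-slot, or $a_2$ in the density, acquire a factor small in $W^1_\infty$ (respectively $L_\infty$); Lemma~\ref{L:MP0}(iii), applicable because $n'\geq 1$ after a possible use of \eqref{rell}, bounds the $L_p$-norm of such a term by $C\|f_2'\|_\infty\|\vartheta\|_{W^{s-1}_p}\leq C\delta\|\vartheta\|_{W^{s-1}_p}$; the $W^{s-1}_p$-seminorm is treated by combining \eqref{MES}, interpolation~\eqref{IP}, and Young's inequality, exactly as in the proof of Proposition~\ref{Prop:2}, yielding the bound $\nu\|\pi_N^\e\vartheta\|_{W^{s-1}_p}+K\|\vartheta\|_{W^{s'-1}_p}$ once $\e$ is small. \emph{Bulk terms} involving only $f_1$ and $a_1$ are produced by functions supported in $\{|x|\leq 1/\e\}$, disjoint from $\supp\pi_N^\e$; Lemma~\ref{L:MP1} ensures these operators map $L_p(\R)$ into $W^1_p(\R)$, and Lemma~\ref{L:AL1} together with the support separation forces their $\pi_N^\e$-localization to be of order $K\|\vartheta\|_{W^{s'-1}_p}$.

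For the second estimate ($n=0$, $a=1$) one further step is required: to compare $B_{0,m}^0(f)[\vartheta]$ with $H[\vartheta]=B_{0,m}^0(0)[\vartheta]$. Applying \eqref{rell} with $\tilde a_j=f$ and $a_j=0$ yields
\begin{equation*}
B_{0,m}^0(f)[\vartheta]-H[\vartheta]=\sum_{i=1}^{m} B_{2,m+1}(f,\ldots,f,0,\ldots,0)[f,-f,\vartheta],
\end{equation*}
so the comparison reduces to $B_{2,m+1}$-operators with two explicit $f$-factors in the $b$-slots, which are handled exactly as the tail terms above after the decomposition $f=f_1+f_2$. The remaining passage from $\pi_N^\e H[\vartheta]$ to $H[\pi_N^\e\vartheta]$ produces the commutator $[\pi_N^\e,H]$, which by Lemma~\ref{L:AL1} applied with $n=m=0$ (so that $B_{0,0}^0=H$) maps $L_p(\R)$ into $W^1_p(\R)\hookrightarrow W^{s-1}_p(\R)$, and hence contributes $K\|\vartheta\|_{W^{s'-1}_p}$.

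The main obstacle is that the smallness of the tail $f_2$ is naturally measured in $W^1_\infty(\R)$, while the multilinear $W^{s-1}_p$-bound in Lemma~\ref{L:MP0}(ii) uses $\|b'\|_{W^{s-1}_p}$, which does not shrink with $\e$. The resolution is to channel the top-order estimate through the $L_p$-bound of Lemma~\ref{L:MP0}(iii), whose constant carries only a single $L_p$-factor of $b_1'$; interpolation~\eqref{IP} and Young's inequality then transfer the $L_\infty$-smallness of $f_2'$ into the coefficient of $\|\pi_N^\e\vartheta\|_{W^{s-1}_p}$, making that coefficient smaller than any prescribed $\nu$ once $\e$ is chosen sufficiently small.
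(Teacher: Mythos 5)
Your high-level plan --- freeze the coefficients at infinity by exploiting the decay of $f,f'$ (and $a$), then split $f$ into a bulk part and a tail part --- is in the spirit of the argument the paper defers to (via \cite[Lemma~15]{AM22} and \cite[Lemma~D.6]{MP2021}). However, as written the argument has genuine gaps.

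The most serious one concerns the right-hand side of the target estimate. You must produce $\nu\,\|\pi_N^\e\vartheta\|_{W^{s-1}_p}$ with the \emph{localized} density $\pi_N^\e\vartheta$, not $\nu\,\|\vartheta\|_{W^{s-1}_p}$; this is the entire point of the lemma (when summed over $j$, $\sum_j\|\pi_j^\e\vartheta\|_{W^{s-1}_p}$ is comparable to $\|\vartheta\|_{W^{s-1}_p}$, whereas $\sum_j\nu\|\vartheta\|_{W^{s-1}_p}=N\nu\|\vartheta\|_{W^{s-1}_p}$ is not small). Your tail-term analysis, which feeds through Lemma~\ref{L:MP0} and interpolation, returns factors of $\|\vartheta\|_{W^{s-1}_p}$ and $\|\vartheta\|_p$, and nowhere do you explain how the density inside the integral gets truncated to $\pi_N^\e\vartheta$ (or to $\chi_N^\e\vartheta$). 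This localization of the density is precisely the step where the decay of the singular kernel and the support separation between $\supp\pi_N^\e$ and $\supp(1-\chi_N^\e)$ must be invoked --- the same mechanism that underlies Lemma~\ref{L:AL1} --- and it cannot be obtained by the interpolation/Young argument from Proposition~\ref{Prop:2}, which never introduces a cutoff. Asserting that you ``yield the bound $\nu\|\pi_N^\e\vartheta\|_{W^{s-1}_p}+K\|\vartheta\|_{W^{s'-1}_p}$'' does not make this step appear.

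A second issue is the treatment of the bulk terms. You invoke Lemma~\ref{L:MP1} to claim that these operators map $L_p(\R)$ into $W^1_p(\R)$, but Lemma~\ref{L:MP1} assumes the density $\vartheta$ lies in $W^1_p(\R)$, not merely $L_p(\R)$, so it cannot be applied here (where $\vartheta\in W^{s-1}_p(\R)$ with $s-1<1$). What is actually relevant for the bulk terms is that the singularity at $y=0$ disappears once $x\in\supp\pi_N^\e$ and the $b$- and $a$-slot functions vanish near $x$, so the kernel becomes a bounded, decaying function of $y$; this needs a different (and more elementary) estimate than the one Lemma~\ref{L:MP1} supplies. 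Two smaller points: Lemma~\ref{L:MP0}(iii) gives $\|b_1'\|_p$ in the bound, not $\|b_1'\|_\infty$ as you quote; and with your choice of $\eta_\e$ (transition window of width $\sim\e$), $\|\eta_\e'\|_\infty\sim\e^{-1}$, so $\|(\eta_\e f)'\|_\infty$ and $\|(\eta_\e f)'\|_p$ need not be small --- you should use a transition window whose width is bounded away from zero and appeal to $\|f\|_{W^1_\infty(\{|x|\geq R\})}\to0$ as $R\to\infty$.
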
  
\begin{proof}
If $a=1$, see \cite[Lemma~15]{AM22}. If $a\in W^{s-1}_p(\R)$, the desired estimate follows by arguing as in the proof of \cite[Lemma D.6]{MP2021} (where the result in the case $p=2$ is established).
\end{proof}

\bibliographystyle{siam}
\bibliography{B}

\end{document}